\theoremstyle{plain}
\newtheorem{prop}{Proposition}
\newtheorem{thm}[prop]{Theorem}
\newtheorem{cor}[prop]{Corollary}
\newtheorem{lem}[prop]{Lemma}
\theoremstyle{definition}
\newtheorem{defi}[prop]{Definition}
\theoremstyle{remark}
\newtheorem{rem}[prop]{Remark}
\newtheorem{example}[prop]{Example}
\numberwithin{prop}{section}
\numberwithin{ques}{section}
\numberwithin{equation}{section}
\DeclareMathOperator{\HNN}{HNN}
\DeclareMathOperator{\rst}{res}
\newcommand{\ca}[1]{\mathcal{#1}}
\newcommand{\F}{\mathbb{F}}
\newcommand{\Z}{\mathbb{Z}}
\newcommand{\caA}{\ca{A}}
\newcommand{\tG}{\tilde{G}}
\newcommand{\argu}{\hbox to 7truept{\hrulefill}}
\newcommand{\G}{\ca{G}}
\def\Fin#1{{\it Fin(#1)}}
\begin{document}
\title{Infinitely Generated virtually free pro-$p$ groups and $p$-adic representations}
\author{ P. A. Zalesskii}\footnote{partially supported by CNPq}
\date{\today}

\address{P. A. Zalesski\u i\\
Department of Mathematics\\
University of Brasilia\\
70.910 Brasilia DF\\
Brazil}
\email{pz@mat.unb.br}

\begin{abstract}

We prove the pro-$p$ version of the Karras, Pietrowski, Solitar, Cohen and Scott result stating that a virtually free group acts on a tree with finite vertex stabilizers.  If a virtually free pro-$p$ group $G$ has finite centralizers of all non-trivial torsion elements, a stronger statement is proved: $G$ embeds into a free pro-$p$ product of a free pro-$p$ group and a finite $p$-group. Integral $p$-adic representation theory is used in the proof; it replaces  the Stallings theory of ends in the pro-$p$ case.

\end{abstract}

\maketitle

\noindent MSC classification: 20E18, 20C11

\noindent Key-words: Profinite groups, pro-$p$ groups, HNN-extensions, profinite modules.

\section{Introduction}

Let $p$ be a prime number, and let $G$ be a pro-$p$ group containing
an open free pro-$p$ subgroup $F$, i.e. $G$ is  a virtually free pro-$p$ group. If $G$ is torsion free, then, according to the
celebrated theorem of Serre  \cite{S 65}, $G$ itself is free pro-$p$. This motivated him to ask the question whether the same statement holds
also in the discrete context. His question was answered positively some years later.
In several papers (cf. \cite{stall:bull}, \cite{stall:ends}, \cite{swan:coh1}),
J.R.~Stallings and R.G.~ Swan showed that free groups are precisely the groups of cohomological dimension $1$, and at the same time
J-P.~Serre himself showed that in a torsion free group $G$ the cohomological dimension of a
subgroup of finite index coincides with the cohomological dimension of $G$ (cf. \cite{ser:cohdis}).

One of the major tools for obtaining this type of results in the presence of torsion is the Stallings theory of ends.  Using it the description of a
virtually free discrete groups  as a group acting on a tree with finite vertex stabilizers was obtained  by Karras, Pietrowski and Solitar  \cite{KPS} in finitely generated case,  D. E. Cohen \cite{C}  when the group is countable and by G. P. Scott \cite{Scott} in the general case.

The objective of this paper is to prove a pro-$p$ version of this result that generalizes the main result of \cite{ihes}.

\begin{thm}\label{main} Let $G$ be a virtually free pro-$p$ group. Then $G$ acts on a pro-$p$ tree with finite vertex stabilizers.
\end{thm}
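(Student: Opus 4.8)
The plan is to reduce, via a structural analysis of the torsion, to the finitely generated case settled in \cite{ihes}, using integral $p$-adic representation theory in place of the Stallings theory of ends employed in the discrete setting.

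First I would normalise the data. Replacing $F$ by its core (an open normal subgroup of $G$ contained in $F$, hence free pro-$p$), we may assume $F\trianglelefteq G$ and $Q:=G/F$ is a finite $p$-group; since $F$ is torsion free, every finite subgroup of $G$ meets $F$ trivially and so embeds into $Q$. I then argue by induction on $|Q|$. If $|Q|=1$ then $G=F$ is free pro-$p$ and acts freely on a pro-$p$ tree. If $|Q|>1$ and $G$ has a non-trivial finite normal subgroup $N$, then $FN/N\cong F$ is an open normal free pro-$p$ subgroup of $G/N$ of index $|Q|/|N|<|Q|$, so by induction $G/N$ acts on a pro-$p$ tree $T$ with finite vertex stabilisers; pulling the action back along $G\twoheadrightarrow G/N$ gives a $G$-action on $T$ whose vertex stabilisers are extensions of finite groups by $N$, hence finite. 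So one is reduced to the case in which $G$ is not free and has no non-trivial finite normal subgroup.

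The heart of the matter is then to produce in this situation a non-trivial decomposition of $G$ as an amalgamated free pro-$p$ product or pro-$p$ HNN-extension over a finite subgroup. Since the Stallings theory of ends has no pro-$p$ analogue, I would instead study the profinite $\Z_p[Q]$-lattice $M:=F/\overline{[F,F]}$ on which $Q$ acts by conjugation. As $\ccd_p(F)\le 1$, the Lyndon--Hochschild--Serre spectral sequence of $1\to F\to G\to Q\to 1$ governs the cohomology of $G$ --- in particular on finite and $p$-adic permutation coefficient modules --- in terms of $Q$ and the $\Z_p[Q]$-module $M$ alone, so that whether $G$ splits over a finite subgroup becomes a question about the module structure of $M$; integral $p$-adic representation theory of the group ring $\Z_p[Q]$ is then used to locate, whenever $G$ is not free, a $Q$-submodule of $M$ forcing such a splitting, with the proper vertex groups again virtually free pro-$p$ of controlled size. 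I expect this to be the main obstacle: $\Z_p[Q]$ need not be of finite lattice type, so one must work with families of lattices rather than a classification, and one must translate the resulting module-theoretic splitting into a genuine group-theoretic one.

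Finally, $G$ need not be finitely generated, so this splitting cannot merely be iterated finitely often --- precisely the difficulty overcome by Cohen and Scott in the discrete case, and what forces the use of \emph{profinite} rather than finite graphs of pro-$p$ groups. I would organise the iteration as an inverse limit: write $G=\varprojlim_i G_i$ with each $G_i$ a finitely generated virtually free pro-$p$ group, to which the main result of \cite{ihes} applies, choosing the surjections $G\twoheadrightarrow G_i$ compatibly with the decompositions above so that the associated pro-$p$ trees $T_i$ form an inverse system; then $T:=\varprojlim_i T_i$ is again a pro-$p$ tree, the defining exactness property of a pro-$p$ tree being preserved under inverse limits of profinite graphs, and $G$ acts on $T$ with vertex stabilisers that are inverse limits of finite $p$-groups of order at most $|Q|$, hence finite. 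The technical crux here is to make the approximating decompositions of $G$ coherent enough that $\varprojlim_i T_i$ is genuinely connected and $\Z_p$-simply connected; note that the action so obtained need not come from a decomposition of $G$ as the fundamental pro-$p$ group of a profinite graph of finite $p$-groups, which explains why the theorem is stated only in terms of the action.
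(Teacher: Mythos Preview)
Your strategy diverges substantially from the paper's, and one of its steps has a concrete obstruction.

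The paper does not iterate splittings over finite subgroups, does not reduce to the finitely generated case of \cite{ihes}, and does not pass to an inverse limit of trees. Instead it proceeds in one stroke: embed $G$ into $G_0=G\amalg G/F=F_0\rtimes H$ with $H=G/F$ and $F_0$ free pro-$p$, and then further into an overgroup $\tilde G=\tilde F\rtimes H$ constructed so that \emph{every} torsion element of $\tilde G$ is conjugate into the single finite group $H$ (the Appendix construction, Lemma~\ref{mono}). The representation-theoretic core is Theorem~\ref{permutation module}: under this torsion hypothesis, $\tilde F^{ab}$ is a $\Z_p[H]$-permutation lattice, proved by induction on $|H|$ using an infinitely generated pro-$p$ version of Weiss' rigidity theorem (Theorems~\ref{thm:weiss} and~\ref{generalized weiss}). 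Then Theorem~\ref{perm HNN} shows that a permutation abelianisation forces $\tilde G$ to be a special pro-$p$ HNN-extension with base group $H$; such a group acts on its standard pro-$p$ tree with vertex stabilisers conjugate into $H$, and one simply restricts this action to $G$. There is no accessibility-style iteration, no induction on $|Q|$ at the group level, and no limit argument.

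Your step~4 cannot be carried out as stated. To realise $G=\varprojlim_i G_i$ with each $G_i$ finitely generated virtually free pro-$p$ and with $G$ acting on each $T_i$, the $G_i$ must be quotients of $G$; but proper pro-$p$ quotients of free pro-$p$ groups are essentially never free pro-$p$ (already $\Z_p^2$ is a quotient of the rank-$2$ free pro-$p$ group), so there is no reason for such $G_i$ to exist. Taking the $G_i$ finite gives only one-point trees. The paper sidesteps this entirely by enlarging $G$ rather than approximating it from above. Your step~3 is also precisely where the content lies, and the proposal does not supply it: the passage from ``$M=F^{ab}$ has some $\Z_p[Q]$-structure'' to ``$G$ splits over a finite subgroup'' is exactly Theorems~\ref{permutation module} and~\ref{perm HNN}, and it requires showing $M$ is \emph{globally} a permutation lattice and then lifting a permutation basis into centralisers $C_F(H_e)$ (Proposition~\ref{abelianization general centralizers}); the Lyndon--Hochschild--Serre spectral sequence by itself does not yield this.
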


The theory of ends for pro-$p$ groups has been initiated in \cite{korenev}. However, it is not known whether
an analogue of Stallings' Splitting Theorem holds in this context. Here we use the representation theory over $p$-adic numbers; it is quite amazing that in the pro-$p$ situation it can replace the theory of ends.

More precisely,
first we embed $G$ into a semidirect product of $\tilde G=\tilde F\rtimes H$ of a free pro-$p$ group $\tilde F$ and a finite $p$-group $H$ such that all torsion elements of $\tilde G$ are conjugate into $H$.  Now we use
  integral $p$-adic representations of pro-$p$ groups. Namely, we prove that the abelianization $\tilde F^{ab}$ is a permutation $\Z_p[H]$-module. Then we use the  results of \cite{MSZ} on infinitely generated $p$-adic permutation modules and especially infinitely generated pro-$p$ version of the celebrated theorem of A. Weiss \cite{weiss} to prove the following theorem of independent interest.

 \begin{thm}\label{HNN} Let $G=F\rtimes H$ be a semidirect product of a finite $p$-group and a free pro-$p$ group $F$. Suppose the abelianization $F^{ab}$ is a $\Z_pH$-permutation module. Then $G$ is an HNN-extension with the base group $H$.\end{thm}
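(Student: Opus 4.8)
The plan is to deduce Theorem~\ref{HNN} from the infinitely generated pro-$p$ version of Weiss's theorem in \cite{MSZ}, by showing that a single $\Z_p\dbl G\dbr$-module attached to the pair $(G,H)$ is a permutation module. Recall that $G$ is an HNN-extension with base group $H$ exactly when it is the pro-$p$ fundamental group of a graph of pro-$p$ groups over a one-vertex graph, equivalently when $G$ acts on a pro-$p$ tree $T$ with $G\backslash T$ a single vertex whose stabilizer is conjugate to $H$ and with finite edge stabilizers. By pro-$p$ Bass--Serre theory such a $T$ exists precisely when the relative augmentation module $M:=\ker(\Z_p\dbl G/H\dbr\xrightarrow{\eps}\Z_p)$ is a permutation $\Z_p\dbl G\dbr$-module with finite point stabilizers, in which case one reads off $T$ from the short exact sequence $0\to M\to\Z_p\dbl G/H\dbr\xrightarrow{\eps}\Z_p\to0$. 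Note that $\Z_p\dbl G/H\dbr$ is $\Z_p$-free, so $M$ is a pro-$p$ lattice, as Weiss-type statements require.

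The heart of the argument is to check the hypotheses of the pro-$p$ Weiss theorem of \cite{MSZ} for the extension $1\to F\to G\to H\to1$. Since $G=F\rtimes H$, the coset space $G/H$ is a free $F$-set, indeed $G/H\cong F$, so $\Z_p\dbl G/H\dbr\cong\Z_p\dbl F\dbr$ as $\Z_p\dbl G\dbr$-modules with $F$ acting by left translation and $H$ by conjugation; under this identification $\eps$ becomes the augmentation of $\Z_p\dbl F\dbr$, whence $M|_F\cong I_F$, the augmentation ideal of the free pro-$p$ group $F$. As $F$ is free, $I_F$ is a free $\Z_p\dbl F\dbr$-module; in particular $M|_F$ is a permutation $\Z_p\dbl F\dbr$-module on whose basis $F$ acts freely. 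Taking $F$-coinvariants, $M\hotimes_{\Z_p\dbl F\dbr}\Z_p\cong I_F/\overline{I_F^2}\cong F^{ab}$ as $\Z_p\dbl G/F\dbr=\Z_p H$-modules, the $H$-action being conjugation --- and this is a permutation $\Z_p H$-module by hypothesis. The pro-$p$ Weiss theorem then gives that $M$ is a permutation $\Z_p\dbl G\dbr$-module, say $M\cong\Z_p\dbl Y\dbr$ for a profinite $G$-space $Y$; and since $M|_F$ is $\Z_p\dbl F\dbr$-free, $F$ acts freely on $Y$, so every point stabilizer in $Y$ meets $F$ trivially and hence embeds in $H$, in particular is finite.

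It remains to pass from the exact sequence $0\to\Z_p\dbl Y\dbr\to\Z_p\dbl G/H\dbr\xrightarrow{\eps}\Z_p\to0$ to an actual HNN-decomposition: one wants to realize it as the augmented chain complex of a pro-$p$ tree $T$ with vertex space $G/H$ and edge space $Y$, for then exactness forces $T$ to be connected with vanishing first homology --- a pro-$p$ tree --- while $G\backslash T$ is a single vertex, so that $G$ is the pro-$p$ fundamental group of a graph of finite groups (vertex group $H$, edge groups the finite stabilizers of points of $Y$) over a one-vertex graph, i.e.\ an HNN-extension with base $H$. Granting the module-theoretic input of \cite{MSZ}, I expect this final step to be the main obstacle: one has to see that the canonical inclusion $\Z_p\dbl Y\dbr\hookrightarrow\Z_p\dbl G/H\dbr$ arises from incidence maps $d_0,d_1\colon Y\to G/H$ of a connected profinite graph. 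Here I would exploit that $I_F$ is not merely a permutation but a \emph{free} $\Z_p\dbl F\dbr$-module in order to extract from $Y$ an $H$-equivariant free generating family of $F$ to serve as the stable letters, and then derive connectedness from the exactness of the sequence; this is the pro-$p$ analogue of the reasoning used in the finitely generated case treated in \cite{ihes}.
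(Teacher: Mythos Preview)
Your central step --- invoking the Weiss theorem of \cite{MSZ} for the extension $1\to F\to G\to H\to 1$ --- does not go through. The result you cite (Theorem~\ref{thm:weiss} here, which is \cite[Thm.~8.8]{MSZ}) requires the ambient group to be a \emph{finite} $p$-group; ``infinitely generated'' in \cite{MSZ} refers to the lattice, not to the group. You are taking $N=F$, an infinite free pro-$p$ group, which lies outside its scope. Moreover, Weiss's hypothesis~(ii) concerns the invariants $M^N$, not the coinvariants $M_N$; for infinite $F$ and $M\vert_F$ a free $\Z_p\dbl F\dbr$-module one has $M^F=0$, so the invariant hypothesis becomes vacuous and the statement would ``prove'' that $I_F$ is a permutation $\Z_p\dbl G\dbr$-module for \emph{every} semidirect product $F\rtimes H$ --- but then $(I_F)_F\cong F^{ab}$ would always be $\Z_p[H]$-permutation, contradicting Proposition~\ref{free-by-p}. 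A coinvariant version of Weiss with $N$ infinite free pro-$p$ is not in \cite{MSZ}, and establishing it for $M=I_F$ amounts to the content of Theorem~\ref{HNN} itself. Your acknowledged second obstacle (realizing the module sequence as the chain complex of an honest pro-$p$ graph) is also genuine: a $\Z_p\dbl G\dbr$-linear inclusion $\Z_p\dbl Y\dbr\hookrightarrow\Z_p\dbl G/H\dbr$ need not carry basis points to differences of cosets.

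The paper sidesteps both issues by never leaving the finite-group setting. The key ingredient is Proposition~\ref{abelianization general centralizers}: the decomposition $F^{ab}=\bigoplus_{e\in E}\prod_{X_e}\Z_p[H/H_e]$ can be rearranged so that each pointed basis $(X_e,*)$ lies in $C_F(H_e)[F,F]/[F,F]$. This is proved by induction on $|H|$, and it is \emph{here} that the (finite-group) Weiss theorem enters, via Theorem~\ref{generalized weiss} and Lemma~\ref{H=He}. Once the $X_e$ are so located, they lift along a continuous section to $C_F(H_e)\subset F$; one then writes down the special HNN-extension $\tilde G=\HNN(H,H_e,X_e)$ explicitly, obtains a map $f\colon\tilde G\to G$ from the universal property, and checks that $f$ is an isomorphism because it induces the identity on $\tilde F^{ab}\cong\bigoplus_e M_e\cong F^{ab}$ (Corollary~\ref{abelianization of special HNN}). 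No tree has to be reconstructed from module data.
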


Thus $\tilde G$ is such an HNN-extension and  as in the classical Bass-Serre theory of groups acting on trees, such an HNN-extension acts on a standard pro-$p$ tree $T$ such that vertex stabilizers  are conjugate to  subgroups of $H$;  hence $G$ acts on $T$ as well. Moreover, this gives the following result.

\begin{thm}\label{embedding}  Let $F\rtimes H$ be a  semidirect product of a free pro-$p$ group $F$   and   a
finite $p$-group $H$. Then the action of $H$ extends to the action on some free pro-$p$ group  $\tilde F$ containing $F$  such that
$H$ permutes the elements of some basis of $\tilde F$.\end{thm}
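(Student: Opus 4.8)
The plan is to reduce to the abelianisation, to enlarge $F^{ab}$ inside a permutation $\Z_pH$-module by integral $p$-adic representation theory, and then to realise that enlargement on the level of free pro-$p$ groups.

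First I would pass to abelianisations. Put $M:=F^{ab}$, a free profinite $\Z_p$-module carrying a continuous action of the finite $p$-group $H$. Using the infinitely generated integral representation theory of \cite{MSZ}, embed $M$ $H$-equivariantly into a permutation $\Z_pH$-module $P$ whose cokernel $N:=P/M$ is again $\Z_p$-torsion-free (hence a free profinite $\Z_p$-module); for finitely generated $M$ this is obtained by choosing a free cover $(\Z_pH)^{(n)}\twoheadrightarrow\Hom_{\Z_p}(M,\Z_p)$ and dualising, using the self-duality of $\Z_pH$, and the general case is of the type \cite{MSZ} provides. Write $P=\Z_p\dbl X\dbr$ as the permutation module on a profinite $H$-set $X$, and let $\tilde F$ be the free pro-$p$ group on the profinite space $X$ with $H$ acting by permuting the basis $X$. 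Then $\tilde F^{ab}\cong P$ as $\Z_pH$-modules, so $H$ already permutes a basis of $\tilde F$; what remains is to embed $F$ into $\tilde F$ $H$-equivariantly.

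The core of the proof is to lift the $\Z_pH$-embedding $M=F^{ab}\hookrightarrow P=\tilde F^{ab}$ to an $H$-equivariant homomorphism $\rho\colon F\to\tilde F$ --- equivalently, to an extension of $\iid_H$ to a homomorphism $F\rtimes H\to\tilde F\rtimes H$ over $H$. A lift exists as soon as one forgets $H$-equivariance, since $F$ is free; the issue is to produce it $H$-equivariantly. I would build $\rho$ through the successive nilpotent pro-$p$ quotients $\tilde F/\gamma_n\tilde F$: given a lift modulo $\gamma_n\tilde F$, the obstruction to an $H$-equivariant lift modulo $\gamma_{n+1}\tilde F$ lies in $H^2(F\rtimes H,\gamma_n\tilde F/\gamma_{n+1}\tilde F)$, and because $F$ is free ($H^q(F,-)=0$ for $q\ge2$) the Lyndon--Hochschild--Serre spectral sequence of $1\to F\to F\rtimes H\to H\to 1$ reduces the vanishing of this group to the vanishing of cohomology of $H$ with coefficients in the graded pieces of the free $\Z_p$-Lie algebra on the permutation module $P$ and in $\Hom_{\Z_p}(M,-)$ of those pieces. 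Securing this --- in essence propagating the permutation structure of $\tilde F^{ab}$ up the lower central series of $\tilde F$ --- is exactly where the pro-$p$ analogue of A.~Weiss' theorem (\cite{weiss}, in the infinitely generated form of \cite{MSZ}) is needed, and it is the same mechanism that drives Theorem~\ref{HNN}; I expect this lifting step to be the main obstacle. Once $\rho$ exists it is automatically injective: its image is a free pro-$p$ subgroup of $\tilde F$ onto which $F$ surjects, and since $N=P/M$ is $\Z_p$-torsion-free the map $M/pM\to P/pP$ is injective, so $F\to\rho(F)$ is an isomorphism on $\F_p$-homology and therefore an isomorphism.

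Identifying $F$ with $\rho(F)$, we obtain a free pro-$p$ group $\tilde F$ containing $F$, on which the $H$-action extends the given one and $H$ permutes the basis $X$; this proves the theorem. (Alternatively, one may finish by applying Theorem~\ref{HNN} to $\tilde F\rtimes H$, in which the free normal subgroup $\tilde F$ has permutation abelianisation $\tilde F^{ab}\cong P$: it is then an $\HNN$-extension with base group $H$, and the action of such an extension on its standard pro-$p$ tree realises $\tilde F$, the kernel of the retraction onto $H$, as the free pro-$p$ group on the profinite $H$-set $\coprod_i H/A_i$ given by the cosets of the associated subgroups $A_i\le H$, with $H$ permuting it.)
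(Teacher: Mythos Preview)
Your plan diverges from the paper's, and the step you yourself flag as ``the main obstacle'' is a genuine gap rather than a routine verification. The paper never attempts to lift a module embedding $M\hookrightarrow P$ to an $H$-equivariant map of free pro-$p$ groups. It runs in the opposite order: Lemma~\ref{mono} in the Appendix manufactures an explicit pro-$p$ HNN-extension $\tilde G=\HNN(G,A_i,\phi_i,X_i)$ indexed by the finite subgroups of $G$, proves directly that $G\hookrightarrow\tilde G=\tilde F\rtimes H$, and observes that by construction every torsion element of $\tilde G$ is conjugate into $H$. Only then does representation theory enter, and it is applied to $\tilde F^{ab}$, not to $F^{ab}$: Theorem~\ref{permutation module} (this is where Weiss' theorem is actually invoked) shows $\tilde F^{ab}$ is a permutation $\Z_p[H]$-module, Corollary~\ref{permext} rewrites $\tilde G$ as a \emph{special} HNN-extension with base $H$, and Proposition~\ref{perm-ext0} then reads off the $H$-permuted pointed basis of $\tilde F$. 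The embedding comes for free from the HNN construction; the work is recognising the shape of the target.

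In your scheme the work is the embedding, and the sketch you give does not close it. After the LHS reduction the surviving obstruction at stage $n$ lives in $H^1\bigl(H,\Hom_{\Z_p}(M,A_n)\bigr)$ with $A_n\cong L_n(P)$ the $n$-th graded piece of the free Lie algebra on $P$. Weiss' theorem gives no purchase here: it recognises permutation lattices from restriction and fixed-point data, it does not force such $H^1$ groups to vanish; the Lie powers $L_n(P)$ are not permutation lattices in general (already for $p=2$ one has $L_2(\Z_2[C_2])\cong\Lambda^2\Z_2[C_2]$, the rank-one sign lattice); and even when $A_n$ happens to be permutation, $\Hom_{\Z_p}(M,A_n)$ has no reason to be cohomologically trivial for an arbitrary lattice $M$. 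Nor is this ``the same mechanism that drives Theorem~\ref{HNN}'': the proof of Theorem~\ref{perm HNN} builds a map \emph{from} a model special HNN-extension \emph{onto} the given group and checks it is an isomorphism on abelianisations --- no inductive lifting through nilpotent quotients occurs there. Without a concrete argument that the particular pair $(M\hookrightarrow P)$ you produce has vanishing obstructions at every level, or a device for enlarging $P$ along the tower to kill them, the proposal stalls exactly where the paper's HNN construction does the real work.
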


The proof of Theorem \ref{main} also shortens and simplifies the proof of \cite[Theorem 1.3] {ihes}; it brings the complicated induction in $ G$ downstairs to permutation $\Z_p[H]$-module $M=F^{ab}$, where it is very transparent. In fact our proof practically does not use the theory of pro-$p$ groups acting on pro-$p$ trees.

  If a virtually free pro-$p$ group $G$ have finite centralizers of torsion elements then we show a much more precise result.

  \begin{thm}\label{finite centralizers embedding} Let $G$ be a virtually free pro-$p$ group having finite centralizers of the non-trivial torsion elements. Then $G$ embeds into a free pro-$p$ product $G_0=F_0\amalg H$ of a finite $p$-group $H$ and a free pro-$p$ group $F_0$.\end{thm}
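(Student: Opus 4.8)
The plan is to start from the embedding already provided by the earlier machinery: by Theorem \ref{embedding} (applied to an open normal free pro-$p$ subgroup $F \trianglelefteq G$ with finite quotient $G/F =: \bar G$, a finite $p$-group), we may enlarge $F$ to a free pro-$p$ group $\tilde F$ on which $\bar G$ acts permuting a basis, so that $G$ embeds into $\tilde G = \tilde F \rtimes \bar G$ and all torsion of $\tilde G$ is conjugate into $\bar G$. Equivalently, writing $M = \tilde F^{\ab}$, we have a permutation $\Z_p[\bar G]$-module. The point of the hypothesis on centralizers is to upgrade this permutation structure: I would show that the finiteness of $C_G(x)$ for every non-trivial torsion $x$ forces $M$ to be a \emph{free} $\Z_p[\bar G]$-module, i.e. the stabilizers of the basis elements are trivial. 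Indeed, if some basis orbit had non-trivial stabilizer $S \leq \bar G$, a non-trivial $p$-element of $S$ fixes a free generator of $\tilde F$, hence centralizes a whole free factor $\langle y \rangle \cong \Z_p$, so its centralizer in $\tilde G$ — and, after arranging that this torsion lies in the copy of $G$, in $G$ — is infinite, contradicting the hypothesis. (One must be careful that the torsion element witnessing this actually comes from $G$, not just from $\tilde G$; this is where I expect to spend real effort, and it may require choosing the embedding $G \hookrightarrow \tilde G$ more carefully, or arguing that centralizers in $\tilde G$ of torsion already coming from $G$ are the relevant ones.)

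Once $M = \tilde F^{\ab}$ is a free $\Z_p[\bar G]$-module, the HNN-decomposition produced by Theorem \ref{HNN} has trivial associated subgroups: the stable letters are all free generators with no twisting relations, so $\tilde G$ is the HNN-extension with base $\bar G$ and trivial edge groups, which is exactly the free pro-$p$ product $\tilde F_0 \amalg \bar G$ for an appropriate free pro-$p$ group $\tilde F_0$ (of rank equal to the number of $\bar G$-orbits on the basis). Thus $\tilde G \cong \tilde F_0 \amalg \bar G$, and $G$ embeds into this free pro-$p$ product. Setting $G_0 = F_0 \amalg H$ with $H = \bar G$ and $F_0 = \tilde F_0$ completes the argument.

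The main obstacle, as flagged, is the step that translates ``finite centralizers in $G$'' into ``freeness of $M$ as a $\Z_p[\bar G]$-module''. The subtlety is that $\bar G = G/F$ need not itself sit inside $G$ as a complement — $G$ need not split over $F$ — so a priori the torsion elements of $\bar G$ acting on the basis of $\tilde F$ are not literally torsion elements of $G$. I would handle this by first passing, inside $\tilde G = \tilde F \rtimes \bar G$, to the subgroup generated by $G$ together with enough of the $\bar G$-action, and observing that every torsion element of $\tilde G$ is $\tilde G$-conjugate into $\bar G$; then a torsion element $t \in \bar G$ with non-trivial basis stabilizer is conjugate to one acting the same way, and its $\tilde G$-centralizer contains the rank-$1$ free pro-$p$ group generated by any fixed basis element, hence is infinite. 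To derive a contradiction with the hypothesis on $G$ I then need that some \emph{$\tilde G$-conjugate of $t$ lies in $G$} with still-infinite centralizer in $G$; this follows if $t$ (or a conjugate) is a non-trivial torsion element of $G$, which one can arrange because $G$ has finite index in $\tilde G$ and so meets the conjugacy class of $t$ in a finite-index subgroup of $C_{\tilde G}(t)$, still infinite. Packaging this carefully — and verifying that an HNN-extension over $H$ with trivial edge groups is genuinely a free pro-$p$ product $F_0 \amalg H$ in the pro-$p$ category — is the bulk of the work; the rest is formal.
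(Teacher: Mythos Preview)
Your central claim---that the finite-centralizer hypothesis on $G$ forces $M=\tilde F^{\ab}$ to be a \emph{free} $\Z_p[\bar G]$-module---is false, and the argument you sketch for it breaks at exactly the point you flagged. The enlargement $\tilde G$ produced by Theorem~\ref{embedding} (equivalently, by the HNN-construction in Lemma~\ref{mono}) \emph{always} introduces stable letters centralizing non-trivial subgroups of $\bar G$, regardless of centralizers in $G$. Concretely: the construction adjoins a stable letter for every finite subgroup of the semidirect product $F_0\rtimes\bar G$, and in particular for the subgroup $\bar G$ itself one obtains a stable letter $x$ with $x\,a\,x^{-1}=a$ for all $a\in\bar G$. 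Hence $C_{\tilde F}(\bar G)\supseteq\langle x\rangle\cong\Z_p$, and in the special-HNN description $\tilde G=\HNN(\bar G,H_e,X_e)$ one has $H_{e_*}=\bar G$ for some $e_*$, so $\tilde F^{\ab}$ contains a trivial $\Z_p[\bar G]$-summand and is not free. Your attempted repair---that $G$ has finite index in $\tilde G$, so a torsion element $t\in\bar G$ with infinite $\tilde G$-centralizer must have a conjugate in $G$ with infinite $G$-centralizer---fails because $G$ does \emph{not} have finite index in $\tilde G$: the HNN-extension adds a profinite space of stable letters, and $[\tilde G:G]$ is infinite. The infinite centralizers you detect live entirely in $\tilde G\setminus G$ and say nothing about $C_G(t)$. (There is also a smaller issue: Theorem~\ref{embedding} needs a semidirect product as input, so you must first pass to $G\amalg G/F=F_0\rtimes\bar G$ before invoking it; but this is easily fixed.)

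The paper's route is genuinely different. It does not try to make $\tilde G$ itself a free pro-$p$ product; instead it kills the offending stable letters. Writing $\tilde G=\HNN(\bar G,H_e,X_e)$ as above, one forms the quotient by the normal closure of $\{\,X_e : H_e\neq 1\,\}$; by inspection of the presentation this quotient is $F_0\amalg\bar G$. The substance is then Lemma~\ref{centralizers out}: if $F$ is a $\bar G$-invariant free factor of $\tilde F$ with $C_F(H_e)=1$ for every non-trivial $H_e$, then $F$ meets the kernel of this quotient trivially, so $G$ still embeds after collapsing. The finite-centralizer hypothesis is used precisely here---inside $G$, where it is assumed---to guarantee $C_F(H_e)=1$, not to constrain centralizers in $\tilde G$. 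So the correct shape of the argument is ``quotient $\tilde G$ to a free product and show $G$ survives'', not ``show $\tilde G$ is already a free product''.
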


  This theorem  can be considered as a generalization of the main result of \cite{McZ} where under the hypothesis of the second countability of $G$ was shown that $G$ is a free pro-$p$ product of finite $p$-groups and a free pro-$p$ group. Indeed, this result follows directly from Theorem \ref{finite centralizers embedding}  using the pro-$p$ version of the Kurosh Subgroup Theorem (\cite[Thm. 4.3]{Me} or \cite[Thm 9.6.2]{R 2017}. In general such a decomposition  might not exist. Considering however the natural action of $G$ on $G_0/H$  and denoting by $X$ the $G$-subset  of $G_0/H$ whose points  have non-trivial stabilizers,   we prove the following criterion of the existence of a decomposition of $G$ as  a free pro-$p$ product of finite $p$-groups and a free pro-$p$ group.

  \begin{thm}\label{decomposition} Let $G$ be a virtually free pro-$p$ group having finite centralizers of the non-trivial torsion elements. Then $G$ is a free pro-$p$ product of finite $p$-groups and a free pro-$p$ group if and only if  the natural quotient map $X\longrightarrow X/G$ by the action of $G$ admits a continuous section.

 \end{thm}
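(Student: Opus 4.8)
The plan is to recast the existence of a continuous section as a pro-$p$ Bass--Serre lifting condition for a suitable action of $G$ on a pro-$p$ tree with trivial edge stabilizers, and then to read off the free pro-$p$ product decomposition. First I would fix the geometry. By Theorem~\ref{finite centralizers embedding} we may assume $G\le G_0=F_0\amalg H$; let $T$ be the standard pro-$p$ tree of this free pro-$p$ product, refined by blowing up the $G_0$-orbit $G_0/F_0$ of the $F_0$-vertex into copies of a pro-$p$ tree on which $F_0$ acts freely. Then $G_0$, hence $G$, acts on $T$ with trivial edge stabilizers, and each vertex stabilizer is either trivial or a conjugate of $H$ (the blown-up vertices get trivial stabilizer since $F_0$ is torsion free). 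Because every non-trivial torsion element of $G$ lies in a conjugate of $H$, the set of vertices of $T$ with non-trivial $G$-stabilizer is exactly $X\subseteq G_0/H$. Finally, triviality of edge stabilizers forces a non-trivial finite subgroup of $G$ to fix a \emph{unique} vertex of $T$, whence $x\mapsto\stab_G(x)$ is a $G$-equivariant bijection of $X$ onto the set of maximal finite subgroups of $G$, inducing a bijection of $X/G$ with the set of conjugacy classes of such subgroups.

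For the implication $(\Rightarrow)$ I would argue as follows. Suppose $G=\bigl(\coprod_{y\in Y}G_y\bigr)\amalg F$ is a free pro-$p$ product of a sheaf of finite $p$-groups $G_y$ over a profinite space $Y$ and a free pro-$p$ group $F$. Each $G_y$ is a non-trivial finite $p$-group and so fixes a unique vertex $\sigma(y)\in X$ of $T$; a compactness argument, using the continuity of the $G$-action on $T$ and of the sheaf structure, shows that $\sigma\colon Y\to X$ is continuous. The composite $Y\xrightarrow{\sigma}X\to X/G$ is then a continuous map of profinite spaces, surjective by the pro-$p$ Kurosh subgroup theorem (every finite subgroup of $G$ is conjugate into some $G_y$) and injective because distinct free factors of a free pro-$p$ product are non-conjugate; hence it is a homeomorphism, and composing its inverse with $\sigma$ yields the required continuous section of $X\longrightarrow X/G$.

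For $(\Leftarrow)$, let $s\colon X/G\to X$ be a continuous section. Off the closed $G$-invariant set $X$ the group $G$ acts freely on $T$, so over the complementary part of the quotient map $q\colon T\to G\backslash T$ continuous sections exist; by a standard inverse-limit argument — at each finite quotient the partial sections extending $s$ form a non-empty compact set, and an inverse limit of non-empty compacta is non-empty — the section $s$ extends to a continuous section $j\colon G\backslash T\to T$ of $q$. Such a $j$ presents $G$ as the fundamental pro-$p$ group of the graph of pro-$p$ groups over $G\backslash T$, with vertex groups the point-stabilizers (trivial off $X/G$, and $\stab_G(s([x]))$ along the section) and all edge groups trivial; see the structure theory of pro-$p$ groups acting on pro-$p$ trees in \cite{R 2017} (cf.\ also \cite{ihes}). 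Since a graph of pro-$p$ groups with trivial edge groups over a profinite graph has fundamental pro-$p$ group the free pro-$p$ product of its vertex groups with the fundamental pro-$p$ group of that graph, and the latter is free pro-$p$, it follows that $G\cong\bigl(\coprod_{[x]\in X/G}\stab_G(s([x]))\bigr)\amalg F$ is a free pro-$p$ product of finite $p$-groups and a free pro-$p$ group.

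The hard part will be $(\Leftarrow)$, precisely the passage from the section $s$, supplied only over the cone-point locus $X/G$, to a section $j$ of the \emph{entire} quotient map $q$, together with the verification that the graph of pro-$p$ groups it produces is legitimate, so that the pro-$p$ Bass--Serre structure theorem applies; the delicacy is that in $G\backslash T$ the locus $X/G$ is glued to the freely-acted part, since in the limit a free orbit can degenerate onto a smaller one. The direction $(\Rightarrow)$ is comparatively soft, its only non-formal ingredient being the continuity of $\sigma$, equivalently the fact that the $G$-space $X$ depends only on $G$ and not on the chosen embedding into a free pro-$p$ product of $H$ with a free pro-$p$ group.
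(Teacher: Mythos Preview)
Your outline is correct and, for $(\Leftarrow)$, lands close to the alternative the paper itself sketches via the action on the standard pro-$p$ tree for $G_0$ and \cite[Thm.~9.6.1(a)]{R 2017}. The paper's primary argument for that direction is shorter: it cites Mel'nikov's pro-$p$ Kurosh subgroup theorem \cite[Thm.~4.3]{Me} for $G\le G_0$ and observes (via \cite[Comment~(5.1)]{Me}) that a continuous section of $X\to X/G$ is precisely the datum that replaces second countability there; in particular one need not extend $s$ to a global section of $T\to G\backslash T$ or blow up the $F_0$-vertices---it suffices that $\{G_{s([x])}:[x]\in X/G\}$ be a continuous family, which is \cite[Lemma~5.2.2]{R 2017}. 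For $(\Rightarrow)$ the paper takes a different and tighter route that avoids your hand-wave on the continuity of $\sigma$: rather than producing a map $Y\to X$, it constructs the \emph{image} of the section as an explicit closed set. With $E_0=\bigl(\bigcup_t G_t\bigr)\setminus\{1\}$ closed by \cite[Lemma~5.2.1]{R 2017} and $\eta\colon G\times X\to X\times X$, $(g,x)\mapsto(gx,x)$, one lets $X_0$ be the first-coordinate projection of $\eta(E_0\times X)\cap\Delta$; then $\theta\vert_{X_0}$ is a continuous bijection of compact Hausdorff spaces---injective because distinct $G_t$ are non-conjugate in $G$ \cite[Cor.~7.1.5(a)]{R 2017}, surjective because every finite subgroup of $G$ is conjugate into some $G_t$ \cite[Cor.~7.1.3]{R 2017}---hence a homeomorphism, and its inverse is the section. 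Your route also goes through once the continuity of $\sigma$ is pinned down (it follows from the continuous-family axiom together with uniqueness of the fixed vertex), but the paper's packaging is more self-contained and avoids both the blow-up and the inverse-limit extension step.
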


Such continuous section does not always exist according to \cite[Thm 10.7.4]{R 2017} or \cite[Section 4]{jalg}.

\subsection*{Acknowledgement}

The author  thanks  the anonymous referee for careful reading the manusrcript and many suggestions that lead to considerable improvement of the paper.

\medskip

{\bf Notation}

\medskip

If $G$ is a group and $H$ a subgroup of $G$ then $H^G$ denotes the normal closure of $H$ in $G$. By $G^{ab}$ we shall denote the abelianization of $G$. $\Z_p[G]$ shall denote the group ring if $G$ is finite and $\Z_p[[G]]$ the completed group ring if $G$ is infinite pro-$p$. If $M$ is a
$\Z_p[G]$-module, $M^G$ denotes invariants (i.e. fixed points) and $M_G$ coinvariants (meaning $M_G=M/I_G M$, where $I_G$ is the augmentation ideal). By ${\rm Res}^G_H(M)$ we shall denote the restriction of $M$ to $\Z_p[H]$.

\section{Preliminaries}
\label{s-HNN}

\subsection{Permutation modules}

\begin{defi} A boolean or profinite space $X$  is an inverse limit of
finite discrete spaces, i.e., a compact, Hausdorff, totally disconnected topological space. A pointed profinite space $(X,*)$ is a profinite space with a distinguished point $*$.  A profinite space
$X$ (resp. pointed profinite space $(X,*)$) with a profinite group $G$ acting continuously on it will
be called a $G$-space.\end{defi}

An example of a pointed profinite space is a one point compactification $X\cup \{*\}$ of a discrete space $X$ with added point $*$ being the distinguished point.
If $R$ is a commutative pro-$p$ ring with unity and $(X,*)=\varprojlim_{i\in I} (X_i,*)$ is an inverse limit of finite pointed spaces then $R[[X,*]]=\varprojlim_{i\in I} R[X_i,*]$ is a free pro-$p$ $R$-module  on the pointed space $(X,*)$.  Note that if $(X,*)$ is a one point compactification of a discrete space $X$ then   $R[[X,*]]=\prod_{|X|} R$. Remark also that if $*$ is an isolated point, then $R[[X,*]]=R[[X\setminus \{*\}]]$, i.e. $*$ becomes superfluous. If $G$ is a pro-$p$ group then $R[[G]]$ is a pro-$p$ ring, called a pro-$p$ group ring of $G$.

\begin{defi} Let $H$ be a pro-$p$-group and  $(X,*)$  a pointed $H$-space. Then the free abelian group  $R[[X,*]]$  becomes naturally a  pro-$p$ $R[[H]]$-module called {\it permutation} module. \end{defi}

The next proposition collects  facts on permutation pro-$p$ modules needed for this paper.

\begin{prop} \label{perm modules} Let $R=\Z_p$ or $\F_p$. Let $H$ be a finite $p$-group and $M$ be  a permutation pro-$p$ $R[H]$-module.

\begin{enumerate}

\item[(i)] $($\cite[Corollary 2.3]{McZ}$)$  $M=\bigoplus_{K\leq H}\prod_{I_K} R[H/K]$, where $I_K$ is some set of indices.

\item[(ii)] $($\cite[Prop 3.4]{MSZ}$)$ Every direct summand $A$ of $M$ is a permutation module. Furthermore  there exist a subset $J_K\subseteq I_K$  for each $K$ such that $\prod_{K\leq H}\prod_{J_K} R[H/K]$ complements $A$ in $M$.

\item[(iii)] $($\cite[Cor 6.8]{MSZ}$)$ If $U$ is also permutation $\Z_p[H]$-module then an extension of $M$ by $U$ splits and so is a permutation $\Z_p[H]$-module.

\end{enumerate}
\end{prop}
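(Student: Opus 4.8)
All three assertions are quoted from \cite{McZ} and \cite{MSZ}; let me indicate the mechanism, since it is the passage to \emph{infinitely} generated modules that carries the weight. For (i) I would start from a presentation $M=R[[X,*]]$ with $(X,*)$ a pointed $H$-space. Since $H$ is finite it has only finitely many conjugacy classes of subgroups, and for each subgroup $K$ the locus of points of $X\setminus\{*\}$ whose stabiliser lies in the conjugacy class of $K$ is $H$-invariant and, the $H$-action on a profinite space being continuous, clopen; this partitions $X\setminus\{*\}$ into finitely many $H$-invariant clopen pieces $X_K$, so that $M=\bigoplus_K R[[X_K^{*},*]]$, where $X_K^{*}$ is the one-point compactification of $X_K$. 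On each $X_K$ the orbit map $X_K\to X_K/H$ has all fibres $H$-isomorphic to $H/K$, and the structure theory of continuous finite-group actions on profinite spaces lets one trivialise it over a cofinal family of finite clopen partitions of the profinite base $X_K/H$; after applying $R[[\,\cdot\,]]$ (and using that $R[H/K]$ is free of finite rank over $R$, so completed tensor with it commutes with products) this yields $R[[X_K^{*},*]]\cong\prod_{I_K}R[H/K]$ with $I_K$ of the cardinality of $X_K/H$. This is \cite[Corollary 2.3]{McZ}.

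For (ii), given $M=A\oplus B$, I would reduce to the finitely generated case, where a direct summand of a permutation module over a $p$-group is again a $p$-permutation (trivial-source) module, and an exchange argument shows the complement can be chosen to be a sub-sum of the decomposition in (i). Writing $A$ and $B$ as filtered inverse limits of finitely generated direct summands of $M$, applying the finite statement compatibly along the system, and assembling the subsets $J_K\subseteq I_K$ in the limit (checking continuity) gives the claim; this is carried out in \cite[Prop.~3.4]{MSZ}.

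For (iii), let $0\to U\to E\to M\to 0$ be an extension of permutation $\Z_p[H]$-modules. Since $U,M$ are $\Z_p$-free so is $E$, i.e.\ $E$ is a $\Z_p[H]$-lattice, and I would induct on $|H|$, the case $H=1$ being trivial. For $H\neq 1$ pick a central subgroup $N$ of order $p$. Restricting to $N$: $\Ext^1_{\Z_p[N]}$ vanishes on pairs of permutation $\Z_p[N]$-lattices (it reduces to $H^1(N,\Z_p)=H^1(N,\Z_p[N])=0$), so $E|_N$ is permutation. Taking $N$-invariants: $H^1(N,U|_N)=0$ gives a short exact sequence $0\to U^N\to E^N\to M^N\to 0$, in which $U^N,M^N$ are permutation $\Z_p[H/N]$-lattices — the $N$-invariants of any $\Z_p[H/K]$ are $\Z_p[H/K]$ if $N\le K$ and $\Z_p[(H/N)/(KN/N)]$ if $N$ acts freely on $H/K$ — so by the inductive hypothesis for $H/N$ this sequence splits and $E^N$ is permutation. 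Now the criterion of A.~Weiss \cite{weiss}, in the infinitely generated form established in \cite{MSZ} — a $\Z_p[H]$-lattice whose restriction to $N$ is permutation and whose $N$-invariants form a permutation $\Z_p[H/N]$-lattice is a permutation $\Z_p[H]$-module — shows $E$ is permutation; and the same reduction applied to $\Ext^1_{\Z_p[H]}(M,U)$, which by Shapiro's lemma unravels to $H^1$ of finite groups with permutation-lattice coefficients and hence vanishes, splits the original sequence. This is \cite[Cor.~6.8]{MSZ}.

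The group-theoretic content here is classical over a finite $p$-group; the real obstacle is uniformly controlling the infinite products $\prod_{I_K}R[H/K]$ in the inverse-limit topology — in (i) as a bundle trivialisation over a profinite base, in (ii) as a compatible limit of finite exchange arguments, and in (iii) as the genuinely harder infinitely generated version of Weiss's rigidity theorem, together with keeping $\Ext^1$ and $\varprojlim^{1}$ terms under control (here one uses that cohomology of a finite group commutes with infinite products, so the vanishing of $H^1$ propagates).
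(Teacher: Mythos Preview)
The paper does not prove this proposition; the three parts are quoted from \cite{McZ} and \cite{MSZ} without argument, so there is no in-paper proof to compare against. Your sketch is in the right spirit, but two of the steps contain genuine errors.

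In (i), the claim that the stabiliser-type loci in $X\setminus\{*\}$ are clopen is false in general. For a continuous action of a finite group on a profinite space the stabiliser map $x\mapsto H_x$ is only upper semicontinuous: the set $\{x:H_x\ge K\}$ is closed, but $\{x:H_x$ lies in the conjugacy class of $K\}$ need only be locally closed. Concretely, let $H=C_p$, let $Y$ be the one-point compactification of $C_p\times\N$ (with $C_p$ translating each copy of itself and fixing the added point $0$), and set $X=Y\sqcup\{*\}$ with $*$ an isolated fixed basepoint. In $X\setminus\{*\}=Y$ the free locus $C_p\times\N$ is open but not closed and the fixed locus $\{0\}$ is closed but not open. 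One still has $R[[X,*]]\cong R\oplus\prod_{\N}R[C_p]$, of the asserted shape, but this does not arise from a clopen partition of $X\setminus\{*\}$; the argument in \cite{McZ} instead works through the inverse system of finite pointed $H$-quotients of $(X,*)$ and assembles the finite Krull--Schmidt decompositions compatibly.

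In (iii), you misstate Weiss's criterion. Both the original and the infinitely generated form recorded here as Theorem~\ref{thm:weiss} require $\rst^H_N(E)$ to be a \emph{free} $\Z_p[N]$-module, not merely permutation; the weakened version in Theorem~\ref{generalized weiss} needs the extra hypothesis that the $N$-trivial summand be $H$-invariant, and the example immediately following Remark~\ref{converse} shows this hypothesis cannot be dropped. From your $\Z_p[N]$-splitting $E|_N\cong U|_N\oplus M|_N$ alone you cannot verify either condition, so the inductive step through Weiss fails as written. Your alternative line --- vanishing of $\Ext^1_{\Z_p[H]}(M,U)$ via Eckmann--Shapiro and $H^1(K,\text{permutation lattice})=0$ --- is the correct idea and, once established, yields both the splitting and that $E\cong U\oplus M$ is permutation, making the Weiss detour superfluous. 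The residual work, which you rightly flag, is handling the infinite product $\prod_{I_K}\Z_p[H/K]$ in the \emph{first} variable of $\Ext^1$; the remark that ``cohomology of a finite group commutes with products'' only controls products in the second variable.
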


\begin{rem}\label{basis} $\prod_{I_K} \Z_p[H]$ is a free $\Z_p[H]$-module on the pointed profinite basis $(I_K,*)$, where $(I_K,*)$ is a one point compactification of a discrete space $I_K$. Then the composite $$( I_K,*)\longrightarrow \prod_{I_K} \Z_p[H]\longrightarrow \prod_{I_K} \Z_p[H/K]$$ is injective and so $( I_K,*)$ can be identified with its homeomorphic image  in $\prod_{I_K} \Z_p[H/K]$. So we can regard $X=(\bigcup_{K\leq H} I_K,*)$ as a subspace of $M=\bigoplus_{K\leq H}\prod_{I_K} \Z_p[H/K]$ that we shall call a {\it pointed basis} of $M$. Besides,
$\prod_{I_K} \Z_p[H/K]=\Z_p[H]\hat\otimes_{\Z_p[K]}\Z_p[[I_K,*]]$, where $\Z_p[[I_K,*]]$ is a trivial $\Z_p[[K]]$-module and
$\hat\otimes$ means the completed tensor product.\end{rem}

Recall that a $\Z_p[[G]]$-module $M$ is called a lattice if it is free as $\Z_p$-module.  One of the most beautiful results of modular representation theory of the end of 20-th century is the theorem of Weiss \cite{weiss} which infinitely generated
 pro-$p$ version is the following recently proved

\begin{thm}\cite[Thm 8.8]{MSZ}
\label{thm:weiss}
Let $G$ be a finite $p$-group and
let $M$ be a left $\Z_p[G]$-lattice. Let $N$ be a normal subgroup of $G$ such that
\begin{itemize}
\item[(i)] $\rst^G_N(M)$ is a free  $\Z_p[N]$-module, and
\item[(ii)] $M^N$ is a $\Z_p[G/N]$-permutation module.
\end{itemize}
Then $M$ is a $\Z_p[G]$-permutation module.
\end{thm}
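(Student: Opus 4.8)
The plan is to follow A.~Weiss's strategy \cite{weiss}, first cutting the problem down to a central subgroup of order $p$ and then treating that case essentially by hand, keeping track of the extra bookkeeping forced by the profinite, possibly infinitely generated, setting.

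\emph{Reduction to $N$ central of order $p$.} Since $G$ is a finite $p$-group and $N\neq 1$, the intersection $N\cap\Zen(G)$ is nontrivial; choose $Z\leq N\cap\Zen(G)$ of order $p$ with generator $t$ and put $\nu=1+t+\cdots+t^{p-1}$. First I would check that the hypotheses descend to $M^Z$, regarded as a $\Z_p[G/Z]$-lattice with normal subgroup $N/Z$. Freeness of $\rst^G_Z(M)$ gives $M^Z=\nu M$, so $M/M^Z\cong(t-1)M$ is $\Z_p$-torsion free and $M^Z$ is a $\Z_p[G/Z]$-lattice; taking $Z$-invariants of the free $\Z_p[N]$-module $\rst^G_N(M)$ yields the free $\Z_p[N/Z]$-module $\rst^{G/Z}_{N/Z}(M^Z)$; and $(M^Z)^{N/Z}=M^N$ is a permutation $\Z_p[G/N]$-module by hypothesis. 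An induction on $|N|$ then reduces us to the case $N=Z$ central of order $p$.

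\emph{Lifting the permutation datum.} In the reduced situation $\rst^G_Z(M)$ is $\Z_p[Z]$-free and $M^Z$ is a permutation $\baG$-module, where $\baG=G/Z$; multiplication by $\nu$ gives a $\baG$-isomorphism $M_Z\xrightarrow{\ \sim\ }M^Z$, so $M_Z$ is permutation as well. I would lift a pointed $\baG$-basis of $M^Z$ (in the sense of Remark \ref{basis}) to $M$: for a basis point with $G$-stabiliser $\hat X$ (so $Z\leq\hat X$) take the corresponding $v\in M^Z$, write $v=\nu e$, and observe that $e$ is automatically part of a $\Z_p[Z]$-basis of $\rst^G_Z(M)$, since $v$ lies in a $\Z_p$-basis of the $\Z_p$-summand $M^Z$ of $M$ and $\nu\colon M_Z\to M^Z$ is an isomorphism. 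Hence $W:=\Z_p[Z]e\cong\Z_p[Z]$. Because $Z$ is central, each $g\in\hat X$ acts on $W$ by multiplication by a unit $u_g\in\Z_p[Z]^\times$, and $g$-invariance of $v=\nu e$ forces $u_g$ to have augmentation $1$; thus $g\mapsto u_g$ is a homomorphism $u\colon\hat X\to U$ into the group $U$ of augmentation-$1$ units of $\Z_p[Z]$, which restricts on $Z$ to the tautological embedding.

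\emph{The crux and assembly.} The key point is that $\Z_p[Z]$ is a local ring with residue field $\F_p$, so $U$ is a pro-$p$ group, and the only $p$-power roots of unity in $\Z_p[Z]$ are the powers of $t$; hence the torsion subgroup of $U$ is exactly $Z$. Therefore the finite subgroup $u(\hat X)$ lies in $Z$, and since it contains $u(Z)=Z$ it equals $Z$; so $L:=\ker u$ is a complement to $Z$ in $\hat X$, it fixes $e$, and $W=\Z_p[\hat X]e\cong\Z_p[\hat X/L]$ — in particular $\Z_p[G]e\cong\Z_p[G/L]$ is a permutation $\Z_p[G]$-module. Doing this over a pointed $\baG$-basis of $M^Z$ should display $M$ as a direct sum of permutation modules $\Z_p[G/L]$, i.e.\ as a $\Z_p[G]$-permutation module; note this also explains why non-split extensions never obstruct the conclusion, since the complement $L$ is produced automatically. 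I expect the main obstacle to be not any individual step — each is a short finite computation — but making this last paragraph rigorous in the profinite setting: choosing the lifts $e$ coherently over the basis, showing each $\Z_p[G]e$ is a \emph{complemented} submodule of $M$, and assembling them over a pointed profinite space rather than one summand at a time. This is exactly where I would lean on the structural facts about profinite permutation modules recalled in Proposition \ref{perm modules} (every direct summand is again a permutation module and is complemented by a standard one), and I expect it to be the technical heart of the argument.
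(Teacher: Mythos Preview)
The paper does not itself prove Theorem~\ref{thm:weiss}; it is quoted from \cite[Thm.~8.8]{MSZ}, with only the remark that the general case reduces by induction on $|N|$ to $|N|=p$. Your reduction step is correct and matches that remark.

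In the base case your argument has a genuine gap at the assertion ``each $g\in\hat X$ acts on $W$ by multiplication by a unit $u_g\in\Z_p[Z]^\times$.'' From $g\nu e=\nu e$ you only get $ge-e\in (t-1)M$, not $ge\in\Z_p[Z]e$; an arbitrary lift $e$ of $v$ need not have $\hat X$-stable $\Z_p[Z]$-span. For instance, with $G=Z\times C$ ($Z=\langle t\rangle$, $C=\langle c\rangle$ both of order $p$) and $M=\Z_p[Z]e_1\oplus\Z_p[Z]e_2$ where $ce_1=e_1$, $ce_2=te_2$, the basis element $v=\nu e_1\in M^Z$ has $\hat X=G$, yet for the legitimate lift $e=e_1+(t-1)e_2$ one computes $ce=e_1+t(t-1)e_2\notin\Z_p[Z]e$. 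Producing a lift with $\hat Xe\subseteq\Z_p[Z]e$ is exactly the substantive content of Weiss's theorem --- in \cite{MSZ} it is obtained by first proving the lattice is monomial and then that monomial $\Z_p[G]$-lattices over $p$-groups are permutation --- so you have assumed what must be proved. You frame the remaining difficulty as profinite bookkeeping (``choosing the lifts $e$ coherently''), but the obstacle is already the existence of a single good lift; your unit-group calculation (that the torsion in the augmentation-$1$ units of $\Z_p[Z]$ is exactly $Z$) is correct but only becomes relevant once that hard step is done.
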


The proof of this theorem as well as of the original Weiss's theorem is quickly reduced (by induction on $N$) to the case
 $|N|=p$. In this case however we can weaken the hypothesis (i) of Theorem \ref{thm:weiss} assuming that $M$ is $\Z_p[N]$-permutation and that a maximal  $\Z_p[N]$-trivial summand of it is $G$-invariant.

 \begin{thm}
\label{generalized weiss}
Let $G$ be a finite $p$-group and
let $M$ be a left $\Z_p[G]$-lattice. Let $N$ be a normal subgroup of $G$ of order $p$ such that
\begin{itemize}
\item[(i)] $\rst^G_N(M)$ is a permutation  $\Z_p[N]$-module $M_1\oplus M_p$ with $M_p$ being $\Z_p[N]$-free and  $M_1$ being $G$-invariant and $\Z_p[N]$-trivial;
\item[(ii)] $M^N$ is a $\Z_p[G/N]$-permutation module.
\end{itemize}
Then $M$ is a $\Z_p[G]$-permutation module and $M_1$ is its direct summand.
\end{thm}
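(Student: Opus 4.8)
The plan is to deduce the theorem from Theorem~\ref{thm:weiss} by splitting off the summand $M_1$. Since $\rst^G_N(M)=M_1\oplus M_p$ as $\Z_p[N]$-modules, $M_1$ is in particular a $\Z_p$-direct summand of $M$, so $\bar M:=M/M_1$ is again a $\Z_p[G]$-lattice, and $\rst^G_N(\bar M)\cong M_p$ is a free $\Z_p[N]$-module; thus $\bar M$ satisfies hypothesis (i) of Theorem~\ref{thm:weiss}. The whole point will be to verify hypothesis (ii) for $\bar M$, i.e.\ that $\bar M^N$ is a $\Z_p[G/N]$-permutation module; granting that, Theorem~\ref{thm:weiss} shows $\bar M$ is a $\Z_p[G]$-permutation module. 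I will moreover show that $M_1$ itself is a $\Z_p[G/N]$-permutation module; since for $N\leq K\leq G$ one has $\Z_p[(G/N)/(K/N)]\cong\Z_p[G/K]$, inflation then makes $M_1$ a $\Z_p[G]$-permutation module. Finally the short exact sequence $0\to M_1\to M\to\bar M\to 0$ of $\Z_p[G]$-lattices has both outer terms permutation, so by Proposition~\ref{perm modules}(iii) it splits; hence $M\cong M_1\oplus\bar M$ is a $\Z_p[G]$-permutation module in which $M_1$ is a direct summand, which is the assertion.

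Everything therefore reduces to the single claim that $M_1$ is a direct summand of $M^N$ as a $\Z_p[G/N]$-module. Indeed, granting it, Proposition~\ref{perm modules}(ii) shows $M_1$ and a complement $C$ are permutation $\Z_p[G/N]$-modules; this supplies the permutation structure on $M_1$, and $C\cong M^N/M_1$. On the other hand the long exact $N$-cohomology sequence of $0\to M_1\to M\to\bar M\to 0$ reads $0\to M_1\to M^N\to\bar M^N\to H^1(N,M_1)$, and $H^1(N,M_1)=0$ because $M_1$ is a $\Z_p[N]$-trivial $\Z_p$-lattice, $N$ is cyclic of order $p$, cohomology of the finite group $N$ commutes with the product of copies of $\Z_p$ constituting $M_1$, and $H^1(\Z/p,\Z_p)=0$. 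Hence $\bar M^N\cong M^N/M_1\cong C$ is a permutation $\Z_p[G/N]$-module, as required. To get a grip on the claim, write $\nu=\sum_{n\in N}n\in\Z_p[N]$ for the norm element; since $M_p$ is $\Z_p[N]$-free one has $M_p^N=\nu M_p$, and since $\nu$ acts on the trivial module $M_1$ as multiplication by $|N|=p$, a direct computation gives $M^N=M_1\oplus\nu M_p=M_1+\nu M$ with $M_1\cap\nu M=pM_1$, so $\nu M$ is a $\Z_p[G/N]$-submodule of $M^N$ with $M^N/\nu M\cong M_1/pM_1$.

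The main obstacle is precisely this last step: passing from the $\Z_p$-linear splitting $M^N=M_1\oplus\nu M_p$ to a $\Z_p[G/N]$-linear one. A $\Z_p$-pure $\Z_p[G/N]$-submodule of a permutation module need not be a direct summand, nor even a permutation module (the augmentation ideal of $\Z_p[G/N]$ sitting inside $\Z_p[G/N]$ is the standard counterexample), so this step must genuinely exploit that $M_1$ is the $\Z_p[N]$-trivial, $G$-invariant part of $\rst^G_N(M)$. I would establish it by running the argument of the base case $|N|=p$ of the proof of Theorem~\ref{thm:weiss} in \cite{MSZ}: over a pointed permutation basis of $M^N$ one builds compatible lifts into $M$, and the only properties of $M$ used there are that $\rst^G_N(M)$ is $\Z_p[N]$-permutation with $G$-invariant trivial part $M_1$ (rather than fully $\Z_p[N]$-free) and that $M/M^N\cong M_p/\nu M_p$ is the same $\Z_p[G]$-lattice (on which $N$ acts through a faithful character of the cyclic group of order $p$) as in the free case; with these observations the induction on rank and the cohomological-dimension estimates go through unchanged, the summand $M_1$ being carried along passively and emerging as a direct summand at the end.
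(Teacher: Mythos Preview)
Your overall plan matches the paper's: show that $M_1$ and $M^N/M_1$ are $\Z_p[G/N]$-permutation, apply Theorem~\ref{thm:weiss} to $M/M_1$, and then split $0\to M_1\to M\to M/M_1\to 0$ via Proposition~\ref{perm modules}(iii). Your cohomological identification $(M/M_1)^N\cong M^N/M_1$ through $H^1(N,M_1)=0$ is correct, and your computation $\nu M=pM_1\oplus M_p^N$ inside $M^N$ is also correct.

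The gap is exactly where you place it, but your proposed fix is not a proof: asserting that the base-case argument of \cite{MSZ} ``goes through unchanged'' with $M_1$ ``carried along passively'' is a claim about another paper's argument, not a verification, and the reader cannot check it. The paper handles this step with a short trick that your own computation almost reaches. Over $\Z_p$ the $G$-submodule $\nu M$ equals $pM_1\oplus M_p^N$, not $M_p^N$, so it does not give a $G$-complement to $M_1$ in $M^N$; but reducing modulo~$p$ kills the unwanted term, since $\nu$ acts as $p=0$ on $M_1/pM_1$. Thus $\nu(M/pM)=M_p^N/pM_p^N$ is an $\F_p[G]$-submodule of $M^N/pM^N$, and $M^N/pM^N=(M_1/pM_1)\oplus(M_p^N/pM_p^N)$ is an $\F_p[G/N]$-decomposition of a permutation module, so Proposition~\ref{perm modules}(ii) makes both summands $\F_p[G/N]$-permutation. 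The paper then invokes \cite[Theorem~8.6]{MSZ}, which says that a $\Z_p[G]$-lattice with permutation mod-$p$ reduction is monomial, to conclude that $M_1$ and $M^N/M_1\cong M_p^N$ are $\Z_p[G/N]$-monomial and hence permutation for odd~$p$; for $p=2$ a further appeal to \cite[Lemma~8.4]{MSZ} yields the $\Z_p[G]$-splitting of $M^N$ directly. This mod-$p$ passage followed by the lifting result from \cite{MSZ} is the missing idea.
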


\begin{proof}  We have a $\Z_p$-decomposition into direct sum $M^N=M_1\oplus M_p^N$  and so  modulo $p$ we have an $\F_p$-decomposition $\bar M^N =\bar M_1\oplus \bar M_p^N$ of $\F_p$-modules. Now applying an operator $\sum_{n\in N} n$ to $\F_p[N]$-module $\bar M=\bar M_1\oplus \bar M_p$  we obtain $\bar M_p^N$ and so  it is an $\F_p[G]$-submodule of $\bar M$. Hence
$\bar M^N=\bar M_1\oplus \bar M_p^N$ is an $\F_p[G]$-decomposition and so by Proposition
\ref{perm modules}(ii)  $\bar M_1$ and $\bar M_p^N$ are $\F_p[G]$-permutation, since $\bar M^N$ is. By  \cite[Theorem 8.6]{MSZ} $M_1$ and $M_p^N\cong M^N/M_1$ are  $\Z_p[G]$-monomial lattices, (i.e. a direct product of  lattices  induced from a rank 1 lattices).   Monomial $\Z_p[G]$-lattices are permutation $\Z_p[G]$-lattices for $p>2$. For $p=2$ we need to apply \cite[Lemma 8.4]{MSZ} to deduce that  the $\Z_p[G]$-module $M^N$ decomposes as  $M^N\cong M_1\oplus M_p^N$ and hence by Proposition \ref{permutation module} (ii) $M_1$ and $M_p^N\cong M^N/M_1$ are $\Z_p[G]$-permutation.  Applying Theorem \ref{thm:weiss} we deduce that  $M_p$ is a $\Z_p[G]$-permutation module.  Therefore $M$ is an extension of  permutation $\Z_p[G]$-modules ($M_1$ by $M_p$) and so by Proposition \ref{perm modules} (iii) is a permutation $\Z_p[G]$-module with $M_1$ being  its direct summand.

\end{proof}

\begin{rem}\label{converse}  Theorem \ref{generalized weiss} is the main ingredient of induction in the proof of the main result.  We note that hypotheses (i) and (ii) of Theorem \ref{generalized weiss} are also necessary. Indeed,   if $M$ is $\Z_p[G]$-permutation then (i) is clearly satisfied. To see (ii) take a typical direct summand $\Z_p[G/H]$ of $M$, where $H$ is a subgroup of $G$. Observe that if $N\leq H$ then $\Z_p[G/H]^N=\Z_p[G/H]$. Otherwise $\Z_p[G/H]$ is $\Z_p[N]$-free and so $\Z_p[G/H]^N=\Z_p[G/H]_N=\Z_p[G/HN]$ (see \cite[Lemma 2.4]{McZ}).\end{rem}

The condition of $G$-invariancy is essential as the following example shows.

\begin{example} Let $G=N\times C$ be a direct product of two groups of order $2$ with generators $c_1$ and $c_2$ respectively. Let $M=\langle x,a,b\rangle$ be a free abelian pro-$2$ group of rank 3. Define an action of $G$ on $M$ as follows: $c_1x=x, c_1a=b, c_1b=a$, $c_2x=x+a+b$, $c_2a=-a, c_2b=-b$. Then  ${\rm Res}^G_N(M)=\Z_2\oplus \Z_2[C_2]$ and so is permutation. The $\Z_2[G/N]$-module $M^N\cong \Z_2[G/N]$ and so is permutation. But $M$ is not permutation $\Z_2[G]$-module, since it is not permutation as $\Z_2[C]$-module.

\end{example}

\subsection{HNN-extensions}

If $(X,*)$ is a pointed profinite space, a free pro-$p$ group $F=F(X,*)$ is a pro-$p$ group together with a continuous map $\omega: (X,*)\longrightarrow (F,1)$ satisfying the following universal property:

$$
\xymatrix{F(X,*)\ar@{-->}[rd]^{\eta}& \\
            (X,*)\ar[u]^{\omega}\ar[r]^{\alpha}&G}$$

for any pro-$p$ group $G$, any  continuous map of pointed profinite spaces $\alpha: (X,*)\longrightarrow (G,1)$
extends uniquely to a continuous homomorphism $\eta:F(X,*)\longrightarrow G$.

If $(X,*)$ is a one point compactification of a discrete space  $X$ then the free pro-$p$ group $F(X,*)$ on the pointed profinite space $(X,*)$ coincides with the notion of a free pro-$p$ group $F(X)$  on the basis $X$ convergent to $1$ (see \cite[Chapter 3]{RZ}).

\medskip

We introduce a notion of a {\em pro-$p$ HNN-extension} slightly simpler than  the construction described in \cite{HZ 07} to make the paper more accessible to the reader; this version suffices for our purpose. In the literature usually the term HNN-extension stands for the construction with one stable letter only, in our case through the paper we shall have a set of stable letters.

\begin{defi}\label{HNN-ext} Suppose that $G$ is a pro-$p$ group, and for a finite set
$\{A_i\mid i\in I\}$ of subgroups of $G$ and a family of pointed profinite spaces $\{(X_i,*)\mid i\in I\}$ there are given continuous maps  $\phi_i:A_i\times X_i\to G$ such that $(\phi_i)_{|A_i\times \{x\}}$ is an injective homomorphism and $(\phi_i)_{|A_i\times \{*\}}=id$ for each $x\in X_i$. The {\em HNN-extension} $\tG:=\HNN(G,A_i,\phi_i, X_i)$, $i\in I$
is defined to be  the quotient of a free pro-$p$ product $ \coprod_{i\in I} F(X_i,*)\amalg G$ modulo the
relations $\phi_i(a_i,x_i)=x_ia_ix_i^{-1}$ for all $x_i\in X_i$, $i\in I$.
We call
$G$ the {\em base group}, $A_i$ and $B_{ix}:=\phi_i(A_i\times\{x\} )$ {\em associated} subgroups and $\bigcup_{i\in I} X_i\setminus \{*\}$ the set of {\em stable letters}. Note that if $A_i$ and $A_j$ are conjugate then the relation $\phi_i(a_i)=x_ia_ix_i^{-1}$ implies automatically the corresponding relation on $A_j$, therefore we can remove $j$ from $i$ enlarging $X_i$; thus w.l.o.g we shall assume from now on that  $\{A_i\mid i\in I\}$ is the family of pairwise non-conjugate subgroups.

If each $(\phi_i)_{|A_i\times \{x\}}$ is an identity map, we call the HNN-extension {\it special}.
\end{defi}

\begin{rem}\label{action on tree}
A pro-$p$ HNN-group $\tilde G$ is a special case of the fundamental pro-$p$ group $\Pi_1(\G, \Gamma)$
of a profinite graph of pro-$p$ groups $(\G, \Gamma)$  (see  \cite[Example (e)]{R 2017}). Namely, a pro-$p$ HNN-group can be thought as $\Pi_1(\G, \Gamma)$, where $\Gamma$ is a bouquet (i.e., a connected
profinite graph having just one vertex - the distinguished  point of the one point compactification $\Gamma=(\bigcup_{i\in I} X_i,*)$ of the set of stable letters). In particular, a pro-$p$ HNN-group $\tG:=\HNN(G,A_i,\phi_i, X_i)$ acts on a pro-$p$ tree $S=S(\tilde G)$ defined as follows:
the vertex set $V(S)=\tilde G/G$ and the edge set $E(S)=\bigcup_{i\in I} \tilde G/A_i\times (X_i\setminus \{*\})$ with $\tilde gG$ and $\tilde g x_i G$ to be initial and terminal vertices of $(\tilde g A_i,x_i)$, $x_i\in X_i$. The fact that $S(\tilde G)$ is a pro-$p$ tree means that there is an exact sequence of permutation $\Z_p[[G]]$-modules

$$0\rightarrow \Z_p[[E(S),*]]\rightarrow \Z_p[[V(S)]]\rightarrow \Z_p\rightarrow 0$$
where the second arrow is given by $(\tilde g A_i,x_i)\rightarrow \tilde g x_i G - \tilde g G$.\end{rem}

The next proposition  corrects unessential error in the statement of Lemma 3.8 in \cite{ihes}, namely the description of the centralizers as $C_{\tilde F}(A_i)=\coprod_{s\in S_i}F(Z_i,*)^s$ there was correct only for $A_i$ maximal finite (by inclusion), and only for maximal $A_i$ used.  One can consult \cite{HZ} for details. We use the left action by conjugation by inverses since it corresponds better to the left modules obtained after the abelianization. 

 \begin{prop}\label{perm-ext0} Let $\tilde G=\HNN(K,A_i,X_i,i\in I)$ be a special pro-$p$ HNN-extension  of a finite $p$-group $K$ and $\tilde F$ be the normal closure of $F(\bigcup_{i\in
I}X_i,*)$ in $\tilde G$. For every $i\in I$
choose respectively coset representative sets $R_i$ of
$K/N_K(A_i)$ and $S_i$ of $N_K(A_i)/A_i$. Then $$\tilde F =\coprod_{i\in I}\coprod_{r\in R_i}\coprod_{s\in S_i}F(X_i,*)^{{s^{-1}r^{-1}}}.$$

\end{prop}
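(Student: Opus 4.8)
The plan is to realise $\tilde F$ as the fundamental pro-$p$ group of the quotient of the standard pro-$p$ tree $S=S(\tilde G)$ of Remark \ref{action on tree} by $\tilde F$, and then to read off its free generators. Because the extension is \emph{special}, the assignment $k\mapsto k$ for $k\in K$ and $x_i\mapsto 1$ for $x_i\in X_i$ respects the defining relations $\phi_i(a_i,x_i)=x_ia_ix_i^{-1}$ (both sides are sent to $a_i$), so it defines a continuous retraction $\rho\colon\tilde G\to K$; its kernel is exactly $\tilde F$, and hence $\tilde G=\tilde F\rtimes K$ with $\tilde F\cap K=1$. From this I would deduce that $\tilde F$ acts freely on $S$: a vertex stabiliser of $\tilde G$ has the form $gKg^{-1}$, and writing $g=fk$ with $f\in\tilde F$, $k\in K$ gives $gKg^{-1}=fKf^{-1}$, whence $\tilde F\cap gKg^{-1}=f(\tilde F\cap K)f^{-1}=1$; since a pro-$p$ tree has no inversions, fixing no vertex forces the whole $\tilde F$-action on $S$ to be free, and in particular $\tilde F$ is free pro-$p$.

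Next I would compute the quotient profinite graph $\tilde F\backslash S$. For the vertices, $\tilde F\backslash V(S)=\tilde F\backslash\tilde G/K\cong K/K$ is a single point, since $\tilde F$ is normal with $\tilde G/\tilde F\cong K$. For the edges, the map $\tilde F\backslash(\tilde G/A_i)\to K/A_i$ induced by $\rho$ is a bijection: it is onto because $\rho$ is, and it is injective because $\tilde F$ is normal --- if $\rho(g')\in\rho(g)A_i$, say $g'=gaf$ with $a\in A_i\le K$ and $f\in\tilde F$, then $g'=\bigl(g(afa^{-1})g^{-1}\bigr)\,ga\in\tilde F gA_i$. Taking the representatives $\{rs : r\in R_i,\ s\in S_i\}$ of $K/A_i$, we find that $\tilde F\backslash S$ is the bouquet with single vertex $*$ and edge space $\bigsqcup_{i\in I}\bigsqcup_{r\in R_i}\bigsqcup_{s\in S_i}(X_i\setminus\{*\})$; this disjoint union is over a finite index set, as $I$ and $K$ are finite, while each $X_i$ is profinite.

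Now I would invoke the structure theory of pro-$p$ groups acting freely on pro-$p$ trees: $\tilde F$ is the fundamental pro-$p$ group of $\tilde F\backslash S$ with all vertex and edge groups trivial (they are $\tilde F\cap gKg^{-1}$ and $\tilde F\cap gA_ig^{-1}$, both $1$), and over a one-vertex graph this makes $\tilde F$ the free pro-$p$ product, over the petals, of the free pro-$p$ groups carried by them. To identify the factors, note that the edge $\tilde e=(rsA_i,x_i)$ has initial vertex $rsK=K$ (as $rs\in K$) and terminal vertex $rsx_iK=\bigl(rs\,x_i\,(rs)^{-1}\bigr)K$, with $rs\,x_i\,(rs)^{-1}\in\tilde F$ because $x_i\in\tilde F\trianglelefteq\tilde G$; hence the loop at $*$ determined by $\tilde e$ contributes the generator $rs\,x_i\,(rs)^{-1}$. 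As $x_i$ runs over the pointed space $(X_i,*)$ these form the conjugated basis $rs\,X_i\,(rs)^{-1}$ of $rsF(X_i,*)(rs)^{-1}=F(X_i,*)^{s^{-1}r^{-1}}$, and assembling over $i$, $r$, $s$ gives $\tilde F=\coprod_{i\in I}\coprod_{r\in R_i}\coprod_{s\in S_i}F(X_i,*)^{s^{-1}r^{-1}}$.

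The routine bookkeeping lies in the first two paragraphs; the point that needs care is the last step, namely the precise form of the structure theorem computing the fundamental pro-$p$ group of the (possibly infinitely generated, profinite) quotient graph $\tilde F\backslash S$ together with a coherent choice of loop generators, and the verification that the resulting continuous maps $(X_i,*)\to\tilde F$, $x_i\mapsto rsx_i(rs)^{-1}$, really assemble into an isomorphism onto the claimed free pro-$p$ product rather than a mere epimorphism. One also has to check that $\tilde F\backslash S$ is a genuine connected profinite graph, so that this machinery applies.
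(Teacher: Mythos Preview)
Your approach is correct in outline but takes a genuinely different route from the paper. The paper does not pass to the standard tree $S(\tilde G)$ at all: it views $\tilde G$ as the quotient of the free pro-$p$ product $F(\bigcup_i X_i,*)\amalg K$ by the relations $[a_i,x_i]$, applies the pro-$p$ Kurosh Subgroup Theorem \cite[Theorem 9.1.9]{RZ} to the normal closure $N$ of $F(\bigcup_i X_i,*)$ in this free product to obtain
\[
N=\coprod_{i\in I}\coprod_{r\in R_i}\coprod_{s\in S_i}\coprod_{a\in A_i}F(X_i,*)^{a^{-1}s^{-1}r^{-1}},
\]
and then observes that imposing the HNN relations simply collapses the innermost $A_i$-indexed factors (since $[a,x_i]=1$ forces $F(X_i,*)^{a^{-1}}=F(X_i,*)$ in the quotient, and the normaliser condition $[a^s,x]=1\Leftrightarrow[a,x^{s^{-1}}]=1$ shows nothing further collapses). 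This is more elementary --- it needs only Kurosh plus a one-line commutator check --- and is in line with the paper's explicit aim of keeping pro-$p$ tree theory to a minimum. Your Bass--Serre argument is more geometric and makes the generators visible as loops in $\tilde F\backslash S$; the price is exactly the point you flag, namely the pro-$p$ structure theorem identifying a group acting freely on a pro-$p$ tree with the fundamental pro-$p$ group of the quotient graph. Here the continuous section of $S\to\tilde F\backslash S$ that this machinery requires is supplied by $K\hookrightarrow\tilde G$ (your computation of $\tilde F\backslash E(S)\cong\bigcup_i K/A_i\times(X_i\setminus\{*\})$ is precisely this), after which the identification follows from the general theory in \cite{R 2017}; alternatively, once you know $\tilde F$ is free pro-$p$, the evident surjection from $\coprod_{i,r,s}F(X_i,*)^{s^{-1}r^{-1}}$ onto $\tilde F$ can be checked to be an isomorphism by comparing abelianisations.
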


\begin{proof} By Definition \ref{HNN-ext}, one can view  $\tilde G$ as the quotient of
 $G:=F(\bigcup_{i\in I}X_i)\amalg K$ modulo the relations
$[a_i,x_i]$ for all $x_i\in X_i$ and $a_i\in A_i$, with $i$
running through the finite set $I$. By the  Kurosh Subgroup Theorem  (see \cite[Theorem 9.1.9]{RZ}) applied to the normal closure $N$ of
$F(\bigcup_{i\in I}X_i,*)$ in $G$ we have a free pro-$p$ decomposition
$$N=\coprod_{i\in I}\coprod_{r\in R_i}\coprod_{s\in
S_i}\coprod_{a\in A_i}F(X_i,*)^{a^{-1}s^{-1}r^{-1}}.$$
The relations yield
$F(X_i,*)^{a^{-1}}=F(X_i^{a^{-1}},*)=F(X_i,*)$.
Since for $s\in S_i, a\in A_i, x\in X_i$ one has
$[a,x]=1$ if, and only if, $[a^s,x]=1$ if and only if $[a,x^{s^{-1}}]=1$, we deduce that  $$\tilde F=\coprod_{i\in I}\coprod_{r\in R_i}\coprod_{s\in
S_i}F(X_i,*)^{s^{-1}r^{-1}}.$$

 \end{proof}

\medskip
 Observing that $K$ acts on $\tilde F =\coprod_{i\in I}\coprod_{r\in R_i}\coprod_{s\in S_i}F(X_i,*)^{s^{-1}r^{-1}}$ permuting the free factors we deduce the following

\begin{cor} \label{abelianization of special HNN} $$\tilde F^{ab} =\bigoplus_{i\in I}\prod_{z\in X_i} \Z_p[K/A_i],$$
with $(X_i, *)[\tilde F,\tilde F]/[\tilde F,\tilde F]$ being a pointed basis of $\prod_{x\in X_i} \Z_p[K/A_i]$. Moreover,  $X_i\subset C_{\tilde  F}(A_i)[\tilde F,\tilde F]/[\tilde F,\tilde F]$.

\end{cor}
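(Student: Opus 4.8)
The plan is to abelianize the free pro-$p$ decomposition of $\tilde F$ furnished by Proposition~\ref{perm-ext0} and to follow the conjugation action of the finite $p$-group $K$ through that abelianization. Since abelianization carries a free pro-$p$ product to the direct sum of the abelianizations of its factors, Proposition~\ref{perm-ext0} immediately yields
$$\tilde F^{ab}=\bigoplus_{i\in I}M_i,\qquad M_i:=\bigoplus_{r\in R_i}\bigoplus_{s\in S_i}\bigl(F(X_i,*)^{s^{-1}r^{-1}}\bigr)^{ab}.$$
Each factor $F(X_i,*)^{s^{-1}r^{-1}}$ is conjugate (by $rs$) to $F(X_i,*)$, so its abelianization is identified with $F(X_i,*)^{ab}=\Z_p[[X_i,*]]$, the free pro-$p$ $\Z_p$-module on $(X_i,*)$; and the index set $\{(r,s)\}$ has size $[K:N_K(A_i)]\cdot[N_K(A_i):A_i]=[K:A_i]$.

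The next step is to identify the $\Z_p[K]$-action on $\tilde F^{ab}$, where $K$ acts by conjugation, taken on the left through inverses as in the paper. Choose the transversals so that $1\in R_i$ and $1\in S_i$; then $R_iS_i$ is a transversal of $K/A_i$ and $K=R_iS_iA_i$ with unique factorizations. For $k\in K$ one has $k\cdot F(X_i,*)^{s^{-1}r^{-1}}=F(X_i,*)^{(krs)^{-1}}$; writing $krs=r_1s_1a$ with $r_1\in R_i$, $s_1\in S_i$, $a\in A_i$ and using that the HNN-extension is \emph{special}, so that $F(X_i,*)^{a^{-1}}=F(X_i,*)$ — the identity already exploited in the proof of Proposition~\ref{perm-ext0} — this equals $F(X_i,*)^{s_1^{-1}r_1^{-1}}$. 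Thus $K$ permutes the free factors carrying a fixed index $i$, and under the bijection $F(X_i,*)^{s^{-1}r^{-1}}\leftrightarrow rsA_i$ it does so by the natural action of $K$ on $K/A_i$; in particular $K$ stabilizes each block $M_i$, so $\tilde F^{ab}=\bigoplus_{i\in I}M_i$ is a decomposition of $\Z_p[K]$-modules. Moreover the stabilizer in $K$ of the base factor $F(X_i,*)=F(X_i,*)^{1}$ is exactly $A_i$, and $A_i$ centralizes $F(X_i,*)$ (again by specialness) and hence acts trivially on $F(X_i,*)^{ab}$.

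From this the first two assertions drop out: $M_i$ is the $\Z_p[K]$-module induced from the trivial $\Z_p[A_i]$-module $\Z_p[[X_i,*]]$, that is $M_i\cong\Z_p[K]\hat\otimes_{\Z_p[A_i]}\Z_p[[X_i,*]]=\prod_{z\in X_i}\Z_p[K/A_i]$ by the last sentence of Remark~\ref{basis}, and under this identification the image of $(X_i,*)$ in $\tilde F^{ab}$ — which lands inside the base copy $F(X_i,*)^{ab}$ of $M_i$ — is precisely the pointed basis of $\prod_{z\in X_i}\Z_p[K/A_i]$ in the sense of Remark~\ref{basis}. For the final claim I would simply note that the defining relations $[a_i,x_i]=1$ of a special HNN-extension force $A_i$ to centralize the closed subgroup $F(X_i,*)$, which is contained in $\tilde F$; hence $X_i\subseteq F(X_i,*)\subseteq C_{\tilde F}(A_i)$, and passing to $\tilde F^{ab}$ shows the pointed basis lies in $C_{\tilde F}(A_i)[\tilde F,\tilde F]/[\tilde F,\tilde F]$.

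The only place where real care is needed is the bookkeeping of the second step: verifying that conjugation shuffles the free factors $F(X_i,*)^{s^{-1}r^{-1}}$ exactly as $K$ acts on $K/A_i$, and that the base-factor stabilizer $A_i$ acts trivially. This is precisely the point at which the hypothesis \emph{special} is used, through the equalities $F(X_i,*)^{a^{-1}}=F(X_i,*)$ and $[A_i,F(X_i,*)]=1$; everything else — the behaviour of abelianization under free pro-$p$ products and the translation, via Remark~\ref{basis}, of the induced module $\Z_p[K]\hat\otimes_{\Z_p[A_i]}\Z_p[[X_i,*]]$ into the product $\prod_{z\in X_i}\Z_p[K/A_i]$ — is routine, bearing in mind only that in the infinitely generated setting the direct sums and tensor products involved are the completed ones.
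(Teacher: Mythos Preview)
Your argument is correct and is exactly the approach the paper intends: the paper derives the corollary in one line by ``observing that $K$ acts on $\tilde F =\coprod_{i\in I}\coprod_{r\in R_i}\coprod_{s\in S_i}F(X_i,*)^{s^{-1}r^{-1}}$ permuting the free factors,'' and you have simply spelled out that observation in full, including the verification that the permutation of the factors matches the action of $K$ on $K/A_i$ and that the stabilizer $A_i$ acts trivially on the base factor by specialness.
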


\section{The structure of the abelianization}

We start this section with the lemma proved for finitely generated case in \cite{HZ 08} that relates a semidirect product $G=F\rtimes C_p$ of a free pro-$p$ group of order $p$ and a group $C_p$ of order $p$ with $\Z_p[C_p]$ module $F^{ab}$. The proof however uses only finiteness of the set of conjugacy classes of groups of order $p$ in $G$. We give the proof here as well for convenience of the reader.

\begin{lem}\label{free-by-Cp} Let $G=F\rtimes C$ be a semidirect product of a finite cyclic $p$-group $C=\langle c\rangle$ and free pro-$p$ group $F$. Suppose $G$ has only one conjugacy class of groups of order $p$. Then

\begin{enumerate}
\item[(i)]  $($\cite[Theorem 1.2]{crelle}, \cite[Prop. 10.6.2]{R 2017}$)$ $G$ has a free pro-$p$ product  decomposition
 $G=\left( C\times H_0\right)\amalg H$, such that $H_0$ and $H$ are free pro-$p$ groups
contained in $F$.

\item[(ii)] The $\Z_p[C]$-module $M=F^{ab}$ decomposes in the form
$$M=M_1\oplus M_p$$
such that $M_1$ is trivial and $M_p$ is a free $\Z_p[C]$-module.

\end{enumerate}

Moreover,  if $X_0$ is the basis of $H_0$ and $X$ is the basis of $H$ then the basis of $M_1$ is $ X_0[F,F]/[F,F]$ and the $\Z_p[C]$-basis of $M_p$ is $X[F,F]/[F,F]$.
\end{lem}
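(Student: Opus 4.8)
The plan is to obtain (ii), together with the description of the bases, purely from the free pro-$p$ product decomposition $G=(C\times H_0)\amalg H$ furnished by (i), which I simply quote from the references cited there; recall that there $H_0,H\leq F$. In outline: first identify $F$, as an abstract pro-$p$ group, with the free pro-$p$ product of $H_0$ and of the $|C|$ conjugates of $H$; then note that conjugation by $c$ respects this decomposition; then abelianize and read off the $\Z_p[C]$-module structure.

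For the first step I would apply the pro-$p$ Kurosh Subgroup Theorem \cite[Theorem 9.1.9]{RZ} to the open normal subgroup $F\trianglelefteq G=(C\times H_0)\amalg H$ — equivalently, let $F$ act on the standard pro-$p$ tree $T$ of this free product and pass to the quotient graph $F\backslash T$, as in Remark~\ref{action on tree}. Since $G=FC$ with $F\cap C=1$ we have $F(C\times H_0)=G$, so the double coset space $F\backslash G/(C\times H_0)$ is a single point, contributing the factor $F\cap(C\times H_0)=H_0$. Since $H\leq F\trianglelefteq G$ we have $FgH=Fg$ for all $g$, so $F\backslash G/H=F\backslash G\cong C$, contributing the $|C|$ subgroups $H^{c^{k}}=F\cap H^{c^{k}}$ for $0\leq k<|C|$. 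The quotient graph $F\backslash T$ is connected with $1+|C|$ vertices and $|C|$ edges, hence a tree, so the complementary free pro-$p$ factor of the Kurosh decomposition is trivial and
$$F=H_0\amalg\coprod_{k=0}^{|C|-1}H^{c^{k}}.$$
Conjugation by $c$ centralizes $H_0$ (since $C$ and $H_0$ commute inside $C\times H_0$) and carries $H^{c^{k}}$ to $H^{c^{k+1}}$; thus it preserves the family of free factors above, fixing $H_0$ pointwise and cyclically permuting the subgroups $H^{c^{k}}$.

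Now I would abelianize: the abelianization of a free pro-$p$ product is the direct sum of the abelianizations, so with $M_1:=H_0^{ab}$ and $M_p:=\bigoplus_{k=0}^{|C|-1}(H^{c^{k}})^{ab}$ we get $M=F^{ab}=M_1\oplus M_p$. Since $H_0\cap[F,F]=[H_0,H_0]$, the summand $M_1$ is the copy of $H_0^{ab}$ inside $F^{ab}$, which is $\Z_p$-free on $X_0[F,F]/[F,F]$; similarly the image of $H=H^{c^{0}}$ is $H^{ab}$, free on $X[F,F]/[F,F]$. By the previous paragraph $c$ acts trivially on $M_1$, so $M_1$ is a trivial $\Z_p[C]$-module, while $c$ cyclically permutes the summands of $M_p$, so $M_p\cong\Z_p[C]\,\hat\otimes_{\Z_p}\,H^{ab}$ is a free $\Z_p[C]$-module with $\Z_p[C]$-basis $X[F,F]/[F,F]$. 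This is precisely assertion (ii) and the closing sentence of the lemma.

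The only delicate point is the vanishing of the complementary free factor in the Kurosh decomposition of $F$: this is where the hypothesis of a single conjugacy class of subgroups of order $p$, already invoked to produce (i), is used, and it is what forces $M_p$ to be honestly $\Z_p[C]$-free with no spurious trivial summand. The remaining steps are routine transport of the free product decomposition of $F$, and of the conjugation action of $c$, through abelianization, and I would not expect any further difficulty there.
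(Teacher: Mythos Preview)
Your argument is correct and follows exactly the route the paper takes: apply the pro-$p$ Kurosh Subgroup Theorem to the open normal subgroup $F$ of $G=(C\times H_0)\amalg H$ to obtain $F=H_0\amalg\coprod_{j} H^{c^{j}}$, then abelianize. The paper's proof is a terse two-line version of yours; your explicit double-coset count and verification that the quotient graph is a tree (hence the free complement vanishes) fill in exactly the details the paper leaves implicit.
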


\begin{proof} (ii) is a particular case of Corollary \ref{abelianization of special HNN} but the proof in this case is much easier so we give it here. Denote by $c$ a generator of $C$.
By the pro-$p$ version of the Kurosh subgroup Theorem,
\cite[Thm. D.3.8]{RZ}
we have
$$F=H_0\amalg \left(\coprod_{j=0}^{p-1}H^{c^j}\right).$$

Factoring out $[F,F]$ one arrives at the desired decomposition.

\end{proof}

We shall need the following proposition, where $tor(G)$ means the set of non-trivial torsion elements.

\begin{prop}\cite[Prop 1.7 (i)]{crelle}\label{lifting torsion} Let $G$ be a virtually free pro-$p$ group and $N$ a normal subgroup of $G$ generated by torsion. Then

$$tor(G)N/N=tor(G/N),$$
where $tor(G)$ stands for the torsion of $G$.
\end{prop}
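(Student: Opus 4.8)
The plan is to prove the two inclusions $\mathrm{tor}(G)N/N \subseteq \mathrm{tor}(G/N)$ and $\mathrm{tor}(G/N) \subseteq \mathrm{tor}(G)N/N$. The first inclusion is immediate: if $t \in G$ has finite order then its image $tN$ in $G/N$ has order dividing that of $t$, and if $tN$ is nontrivial then $tN \in \mathrm{tor}(G/N)$ (if $tN$ is trivial it contributes nothing). So the content is entirely in the reverse inclusion: every nontrivial torsion element of $G/N$ lifts to a torsion element of $G$. First I would reduce to the case of elements of order $p$, since a torsion element of a pro-$p$ group has order a power of $p$, and it suffices to lift generators of subgroups of order $p$ (one then conjugates appropriately, or argues that the full cyclic subgroup lifts). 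So fix $\bar g \in G/N$ of order $p$ and let $\bar C = \langle \bar g\rangle$; I want to find a subgroup $C \le G$ of order $p$ with $CN/N = \bar C$.

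The key tool is that $G$ is virtually free pro-$p$, hence acts on a pro-$p$ tree $T$ with finite vertex stabilizers (this is Theorem \ref{main}, or in fact it is available from \cite{crelle}/\cite{ihes} which predate this paper and are cited). Let $\tilde C$ be the preimage of $\bar C$ in $G$; then $\tilde C$ contains $N$ as a normal subgroup with $\tilde C/N \cong C_p$. Now $N$ is generated by torsion, so $N$ acts on $T$ with the property that its torsion is conjugate into vertex stabilizers; since $N$ is normal in $G$ and generated by elements fixing points of $T$, the subtree $T^N$ of $N$-fixed points is nonempty (a pro-$p$ group generated by elements each fixing a point of a pro-$p$ tree fixes a point — this is the standard pro-$p$ analogue, cf. the structure theory in \cite{R 2017}), and $T^N$ is $\tilde C$-invariant because $N \trianglelefteq \tilde C$. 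Then $\tilde C$ acts on the pro-$p$ tree $T^N$ with $N$ acting trivially, so $\bar C = \tilde C/N$ acts on $T^N$; since $\bar C$ is finite it fixes a vertex $v \in T^N$. Lifting: the stabilizer $G_v$ is finite, $\tilde C$ maps into... no — rather, the point is that $\tilde C_v$ (the $\tilde C$-stabilizer of $v$) surjects onto $\bar C_v = \bar C$, and $\tilde C_v \cap N = N_v = N$ since $N$ fixes all of $T^N$. Hmm, that makes $\tilde C_v = \tilde C$, which is infinite. I need to instead look at $G_v$, which is finite, contains a conjugate of the relevant part, and use that $G_v \cap N$ is a normal subgroup of $G_v$ with $G_v/(G_v\cap N)$ containing $\bar C$; since $G_v$ is a finite $p$-group and $G_v \cap N$ is normal, a generator of $\bar C$ lifts to an element of $G_v$ — but I must ensure the lift has order exactly $p$, which requires choosing $v$ so that $G_v \cap N$ is small; this is where I would use minimality of $T^N$ or pass to the action on $T^N$ more carefully, replacing $N$ by $N\cap G_v$.

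Let me restructure the hard step. The cleanest route: since $N$ is generated by torsion and $G/N$ has a nontrivial element $\bar g$ of order $p$, consider the subgroup $\tilde C = \langle g, N\rangle$ for any lift $g$; this is virtually free pro-$p$ (open in $G$ or a subgroup thereof — actually it need not be open, but it is still virtually free pro-$p$ as a closed subgroup of a virtually free pro-$p$ group, since open free subgroups restrict to open free subgroups). By \cite[Prop 1.7]{crelle} applied to $\tilde C$ — or by the structure of virtually free pro-$p$ groups acting on trees — and the fact that $N\trianglelefteq \tilde C$ is generated by torsion with $\tilde C/N\cong C_p$, one shows $\tilde C$ itself contains an element of order $p$ mapping onto $\bar g$: indeed $\tilde C$ acts on a pro-$p$ tree with finite vertex stabilizers, $N$ fixes a point (being generated by point-stabilizing torsion), and then the finite quotient $C_p = \tilde C/N$ acts on the nonempty fixed tree $T^N$, fixes a vertex $v$, so the finite group $\tilde C_v$ surjects onto $C_p$ with kernel $N_v \subseteq N \cap F$ — where $F$ is an open free (hence torsion-free) normal subgroup of $\tilde C$ — so $\tilde C_v \cap N$ is torsion-free and finite, hence trivial, giving $\tilde C_v \cong C_p$ and thus the desired lift. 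The main obstacle is precisely this claim that $N$ fixes a point of the pro-$p$ tree: for infinitely generated $N$ one must argue that a closed subgroup generated by elliptic torsion elements, even with the normality afforded here, is elliptic; I would invoke that $N$ is itself virtually free pro-$p$ and generated by torsion, so by the structure theorem $N$ is a free pro-$p$ product of finite groups possibly amalgamated trivially — hence $N$ is generated by its torsion subgroups which are conjugate into vertex groups — combined with a limit/compactness argument over the finite quotients of $T$, or cite the relevant elliptic-subgroup result from \cite{R 2017} directly. This is the step I expect to be delicate to state precisely; everything else is routine.
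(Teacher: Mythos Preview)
The paper does not prove this proposition; it is quoted from \cite[Prop.~1.7(i)]{crelle} and used as a black box, so there is nothing in the present text to compare your attempt against.

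Independently of that, your proposal has two real problems.

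\emph{Circularity.} You invoke the fact that $G$ acts on a pro-$p$ tree with finite vertex stabilizers. In this paper that is Theorem~\ref{main}, and Proposition~\ref{lifting torsion} is an ingredient in its proof (via Lemma~\ref{quotient} and Theorem~\ref{permutation module}, which feed into Corollary~\ref{permext} and then Theorem~\ref{acting on trees}). Appealing to \cite{ihes} instead does not help: that paper handles only the finitely generated case, and the infinitely generated tree action is precisely what the present paper is establishing. Within the logical structure of this paper you therefore cannot assume the tree here.

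\emph{The fixed-subtree step fails.} Even granting a tree action, the claim that $T^{N}\neq\varnothing$ is false in general: a pro-$p$ group generated by elliptic elements need not itself be elliptic. For instance, take $G=C_p\amalg C_p$ acting on its standard pro-$p$ tree and set $N=G$; then $N$ is normal, generated by torsion, and fixes no vertex. Your fallback---deducing a free pro-$p$ product decomposition of $N$ into finite pieces from ``the structure theorem'' and then running a compactness argument---again presupposes exactly the structure theory that Theorem~\ref{main} is meant to provide, so it is circular for the same reason. As a result the crucial step, producing a finite-order lift of $\bar g$ inside its preimage $\tilde C$, is not actually carried out. A non-circular proof must avoid the tree action altogether; the argument in \cite{crelle} does so via projectivity/cohomological considerations rather than Bass--Serre theory.
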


\begin{lem}\label{quotient}  Let $G=F\rtimes H$ be a semidirect product of a finite $p$-group $H$ and a free pro-$p$ group $F$. Suppose every torsion element of $G$ is conjugate into $H$.
 If $C$ is central subgroup of order $p$ in $H$, then
$G/C^G=\bar F\rtimes H/C$ with $\bar F$ free pro-$p$ and every finite subgroup of $G/C^G$ is conjugate to $H/C$.
\end{lem}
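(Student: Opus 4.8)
The plan is to produce the normal closure $C^G$ explicitly and control the quotient in three steps: first identify $C^G$ as a free pro-$p$ group sitting inside $F$, then read off the structure of $G/C^G$, and finally transfer the conjugacy statement about finite subgroups via Proposition \ref{lifting torsion}.

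First I would show that $C^G \le F$ is free pro-$p$ and that $C^G \cap H = C$. Since $H$ is a finite $p$-group and $C$ is central of order $p$ in $H$, the subgroup $FC$ is normal in $G$ with $FC = F \rtimes C$, a semidirect product of a free pro-$p$ group and a cyclic group of order $p$. I claim $FC$ has only one conjugacy class of subgroups of order $p$: any element of order $p$ in $G$ — hence in $FC$ — is conjugate in $G$ into $H$ by hypothesis, and an element of order $p$ of $H$ lying in $FC = F\rtimes C$ must generate $C$ (because $FC/F \cong C$ has order $p$ and $F$ is torsion-free, so the only subgroup of order $p$ of $FC$ meeting $F$ trivially that is a quotient-splitting is $C$ itself; more precisely every torsion element of $FC$ projects onto $FC/F = C$ nontrivially or is trivial). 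Thus the hypothesis of Lemma \ref{free-by-Cp} holds for $FC$, giving $FC = (C \times H_0) \amalg H$ with $H_0, H$ free pro-$p$ inside $F$. Now $C^{FC}$, the normal closure of $C$ in $FC$, is the kernel of $FC \twoheadrightarrow (C\times H_0)/C \amalg H \cong H_0 \amalg H$, which is a free pro-$p$ group (free pro-$p$ products of free pro-$p$ groups are free pro-$p$), and it is contained in $F$ since it is contained in the kernel of $FC \to C$. Because $C$ is central in $H$ and $H$ normalizes $F$, the normal closure $C^G$ equals $C^{FC}$ (conjugates of $C$ by elements of $H$ equal $C$, and $G = FH$), so $C^G = C^{FC} \le F$ is free pro-$p$, and $C^G \cap H = C$ since $C^{FC} \le \ker(FC \to C)$ while $C$ maps isomorphically onto $C$.

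Second, with $C^G \le F$ normal in $G$ and free pro-$p$, set $\bar F := F/C^G$. Since $F$ is free pro-$p$ and $C^G$ is a closed normal subgroup with $F/C^G$ torsion-free (it embeds into $G/C^G$ whose torsion we will control, but more directly: $C^G$ is the kernel of the map $F \to F^{ab} \to$ the relevant quotient only if... — actually here I should argue $\bar F$ is free pro-$p$ directly), I would instead invoke that $C^G$ is a free factor of $F$: indeed $FC = (C\times H_0)\amalg H$, so $C^G = C^{FC}$ is the normal closure of a free factor's direct complement... Cleaner: by the pro-$p$ Kurosh theorem applied to $\ker(FC \to H_0 \amalg H)$ as in the proof of Lemma \ref{free-by-Cp}, $C^G$ is free pro-$p$ and $F/C^G$ is free pro-$p$ (the quotient of the free pro-$p$ product $H_0 \amalg (\text{coproduct of conjugates of } H)$ structure descends). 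Granting $\bar F$ free pro-$p$, we get $G/C^G = \bar F \rtimes (H/C)$: the image of $F$ is $\bar F$, the image of $H$ is $H/(H\cap C^G) = H/C$, these intersect trivially in $G/C^G$ and generate it, and the product is semidirect because $F \triangleleft G$ pushes to $\bar F \triangleleft G/C^G$.

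Third, for the statement that every finite subgroup of $G/C^G$ is conjugate to $H/C$: by hypothesis every torsion element of $G$ — equivalently, since $G$ is virtually free pro-$p$, every finite subgroup of $G$ — is conjugate into $H$ (a finite $p$-subgroup of $F\rtimes H$ projects isomorphically to its image in $H$, and finiteness plus the splitting forces conjugacy into $H$; this is the standard fact underlying the hypothesis). Now $C^G$ is generated by torsion (it is the normal closure of the torsion subgroup $C$), so Proposition \ref{lifting torsion} gives $\mathrm{tor}(G)C^G/C^G = \mathrm{tor}(G/C^G)$; hence every torsion element of $G/C^G$ lifts to a torsion element of $G$, which is conjugate into $H$, so its image is conjugate into $H/C$. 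An arbitrary finite subgroup $\bar P \le G/C^G = \bar F \rtimes H/C$ maps isomorphically onto a subgroup of $H/C$ (since $\bar F$ is torsion-free), and by the conjugacy of its torsion elements together with a standard argument (a finite $p$-subgroup of a free-pro-$p$-by-finite group all of whose elements are conjugate into the finite part is itself conjugate into the finite part — one can use that $G/C^G$ acts on its standard pro-$p$ tree, or cite the relevant result) we conclude $\bar P$ is conjugate into $H/C$; maximality of $|H/C|$ among finite subgroups then forces conjugacy to $H/C$ itself.

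The main obstacle I expect is the last step: upgrading "every torsion element is conjugate into $H/C$" to "every finite subgroup is conjugate to $H/C$." For a single cyclic subgroup this is immediate, but for a general finite $p$-subgroup one needs that the finitely many conjugacy classes of order-$p$ subgroups organize into a single conjugacy class of maximal finite subgroups. In the setting of this paper this presumably follows from Lemma \ref{free-by-Cp}(i) applied appropriately or from the action on the standard pro-$p$ tree of the HNN-structure; I would make sure to cite the precise form (e.g. from \cite{crelle} or \cite{R 2017}) that gives: in a semidirect product of a free pro-$p$ group by a finite $p$-group, if all torsion is conjugate into the complement $H$, then all finite subgroups are conjugate into $H$ and the maximal ones are all conjugate to $H$.
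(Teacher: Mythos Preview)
Your approach is the paper's: apply Lemma~\ref{free-by-Cp}(i) to $FC$ for freeness of $\bar F$, and Proposition~\ref{lifting torsion} for the conjugacy statement. The paper's proof consists of exactly these two citations and nothing more.

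There is, however, a genuine error in your first step. You assert that $C^G \le F$ and that $C^G$ is free pro-$p$; both are false, since $C \le C^G$ while $F$ is torsion-free. (Your own next claim, $C^G \cap H = C \neq 1$, already contradicts $C^G \le F$.) The kernel of $(C\times H_0)\amalg H_1 \twoheadrightarrow H_0\amalg H_1$ is indeed $C^{FC} = C^G$, but this is \emph{not} contained in $\ker(FC \to FC/F) = F$; the two kernels are simply different normal subgroups of $FC$. Consequently $\bar F$ is not $F/C^G$ (which is not even defined) but rather $FC^G/C^G = FC/C^{FC} \cong H_0 \amalg H_1$, visibly free pro-$p$. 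This is exactly the paper's sentence ``By Lemma~\ref{free-by-Cp}(i) $\bar F = FC^G/C^G$ is free pro-$p$.'' With this correction your second step becomes immediate and all the hedging about $C^G$ being a free factor of $F$ can be deleted.

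A smaller gap: applying Lemma~\ref{free-by-Cp} to $FC$ requires a single \emph{$FC$}-conjugacy class of order-$p$ subgroups, whereas your argument only produces $G$-conjugacy into $C$. Centrality of $C$ in $H$ closes this: if $x = g^{-1}cg$ with $c \in C$, write $g = hf$ with $h\in H$, $f\in F$, and compute $x = f^{-1}(h^{-1}ch)f = f^{-1}cf$, so $x$ is already $F$-conjugate to $c$.

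Finally, your worry about upgrading from torsion elements to finite subgroups is legitimate but moot: the paper's own proof establishes only the torsion-element version, and that is all that is ever used downstream (in Theorems~\ref{permutation module} and~\ref{rigidity}).
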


\begin{proof}

 By Proposition  \ref{lifting torsion} every torsion element $k$ of $G/C^G$ is the image of some torsion element $g$ of $G$. By hypothesis $g$ is conjugate into $H$ so $k$ is conjugate into  $H/C$. By Lemma \ref{free-by-Cp}(i) $\bar F=FC^G/C^G$ is free pro-$p$.\end{proof}

\begin{thm}\label{permutation module} Let $G=F\rtimes H$ be a semidirect product of a finite $p$-group $H$ and a free pro-$p$ group $F$. Suppose every torsion element of $G$ is conjugate into $H$. Then $F^{ab}$  is a $\Z_p[H]$-permutation module. \end{thm}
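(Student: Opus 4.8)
The plan is to prove the statement by induction on $|H|$, using the generalized Weiss theorem (Theorem~\ref{generalized weiss}) as the inductive engine. If $H=1$ there is nothing to prove: $M:=F^{ab}$ is then a free $\Z_p$-module, hence a $\Z_p$-permutation module. So assume $|H|>1$, choose a central subgroup $C\le H$ of order $p$, and apply Theorem~\ref{generalized weiss} with the normal subgroup $N=C$ (and with $H$ in the role of $G$). Note that $M$ is a $\Z_p[H]$-lattice, since $F^{ab}$ is $\Z_p$-free.

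For hypothesis~(i) I would work inside the open subgroup $FC=F\rtimes C$. Any subgroup $P\le FC$ of order $p$ is generated by a torsion element of $G$; writing a conjugating element as $fh$ with $f\in F$, $h\in H$, the hypothesis gives $h^{-1}(f^{-1}Pf)h\le H$, so $f^{-1}Pf\le FC\cap H=C$, i.e. $P$ is $FC$-conjugate to $C$. Thus $FC$ has a single conjugacy class of subgroups of order $p$, and Lemma~\ref{free-by-Cp}(ii) applied to $FC$ gives $\rst^H_C(M)=M_1\oplus M_p$ with $M_p$ free over $\Z_p[C]$ and $M_1$ the image of $C_F(C)$ in $M$, a $\Z_p[C]$-trivial summand. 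Since $C$ is central in $H$, the subgroup $C_F(C)=F^{C}$ is $H$-invariant, hence so is $M_1$. This realizes hypothesis~(i), and it is exactly the point where the $H$-invariancy demanded by Theorem~\ref{generalized weiss}, whose necessity is shown by the Example after Remark~\ref{converse}, is put to use.

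The heart of the matter is hypothesis~(ii): that $M^C$ is a $\Z_p[H/C]$-permutation module. The subtlety is that $M^C$ is \emph{not} the abelianization of the free part of $G/C^G$. By Lemma~\ref{quotient}, $G/C^G=\bar F\rtimes(H/C)$ is again a semidirect product with all torsion conjugate into $H/C$, so the inductive hypothesis applies to it; but a computation with the Kurosh decomposition of $F$ (in which the $p$ conjugate free factors complementary to $C_F(C)$ get identified modulo $C^G$) identifies $\bar F^{ab}$, as a $\Z_p[H/C]$-module, with the \emph{coinvariants} $M/I_C M$ rather than with $M^C$; indeed the natural map $M^C\to M/I_C M$ is injective with a nonzero $p$-torsion cokernel. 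To bridge the gap I would quotient out $M_1$: as $M_1$ is a $\Z_p[C]$-trivial lattice one has $H^1(C,M_1)=0$, whence $M^C/M_1=(M/M_1)^C$ and $(M/I_C M)/M_1=(M/M_1)_C$; moreover $M/M_1$ is a $\Z_p[H]$-lattice whose restriction to $C$ is free, and since the norm element $\sum_{c\in C}c$ is central in $\Z_p[H]$, multiplication by it induces a $\Z_p[H/C]$-isomorphism $(M/M_1)_C\cong(M/M_1)^C$. Hence $M^C/M_1\cong (M/I_C M)/M_1$ as $\Z_p[H/C]$-modules. Granting that the $H/C$-invariant piece $M_1$ — the image of the $H/C$-invariant free factor $C_F(C)$ of $\bar F$ — is a $\Z_p[H/C]$-direct summand of the permutation module $M/I_C M\cong\bar F^{ab}$, Proposition~\ref{perm modules}(ii) shows $M_1$ and $(M/I_C M)/M_1$ are permutation, so $M^C$ sits in an extension $0\to M_1\to M^C\to M^C/M_1\to 0$ of permutation $\Z_p[H/C]$-modules and is permutation by Proposition~\ref{perm modules}(iii). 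With (i) and (ii) verified, Theorem~\ref{generalized weiss} gives that $M=F^{ab}$ is a $\Z_p[H]$-permutation module, completing the induction.

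The step I expect to be the main obstacle is precisely the last one: reconciling the invariants $M^C$ with the coinvariants $M/I_C M$, i.e. verifying that the image of $C_F(C)$ splits off as a $\Z_p[H/C]$-direct summand of $\bar F^{ab}$. I would try to obtain this from a second use of the inductive hypothesis, applied to the auxiliary semidirect product $C_G(C)/C=C_F(C)\rtimes(H/C)$ (so that $M_1\cong C_F(C)^{ab}$ is itself permutation) and then matching summands; the delicate point is to check that the torsion of such auxiliary quotients is still conjugate into the finite part, which no longer follows directly from Proposition~\ref{lifting torsion} because the subgroups being factored out are torsion-free.
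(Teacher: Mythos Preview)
Your overall strategy---induction on $|H|$ via a central $C$ of order $p$, with Theorem~\ref{generalized weiss} as the inductive engine---is exactly the paper's, and your verification of hypothesis~(i) via Lemma~\ref{free-by-Cp} matches the paper's argument essentially verbatim (including the observation that $C_F(C)$, and hence $M_1$, is $H$-invariant because $C$ is central).

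The only difference is in hypothesis~(ii). The paper dispatches it in one line: using the commutative square
\[
\xymatrix{F\ar[r]\ar[d]& FC/C^G\ar[d]\\ F^{ab}\ar[r]& F^{ab}_C}
\]
(whose bottom row is the abelianization of the top), Lemma~\ref{quotient} plus induction give that $F^{ab}_C$ is $\Z_p[H/C]$-permutation; then, since $M_p$ is $\Z_p[C]$-free, the norm gives $(M_p)_C\cong M_p^C$ (this is \cite[Lemma~2.4]{McZ}), and the paper concludes directly that $(F^{ab})^C\cong F^{ab}_C$, so $(F^{ab})^C$ is permutation. You instead unpack this more carefully and worry---correctly, in principle---that identifying the quotients $M^C/M_1\cong M_C/M_1$ does not by itself identify the two extensions by $M_1$. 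Your proposed detour through the auxiliary group $C_G(C)/C=C_F(C)\rtimes(H/C)$ then creates a new difficulty (verifying the torsion-conjugacy hypothesis in that subquotient) which you leave open.

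So you are on the same route but you flag as an obstacle what the paper treats as immediate. The point the paper is using is that $N_C=\sum_{c\in C}c$ is central in $\Z_p[H]$, so it induces an $H/C$-equivariant map $M_C\to M^C$; together with the common summand $M_1$ and the norm isomorphism on the $C$-free part, this is what underlies the asserted isomorphism $(F^{ab})^C\cong F^{ab}_C$. Your second induction on $C_G(C)/C$ is not what the paper does, and the torsion-conjugacy issue you raise there is precisely the complication the paper's shorter argument is meant to avoid. If you want to tighten your write-up, replace the detour by the paper's direct identification via the norm; you have already done all of the surrounding work correctly.
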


\begin{proof} We use induction on the order of $H$.  Let $C$ be a central subgroup of $H$ of order $p$.  By Lemma \ref{free-by-Cp} $F^{ab}$ is $\Z_p[C]$-permutation with  $F^{ab}=M_1\oplus M_p$, where $M_1$ is trivial and $M_p$ is a free $\Z_p[C]$-module and moreover,  $M_1$ is the image of $C_F(C)$ in $F^{ab}$.  Since $C_F(C)$ is $H$-invariant so is $M_1$.

We want to apply  Theorem \ref{generalized weiss} to show that $F^{ab}$ is a $\Z_p[H]$-permutation module and we showed in the previous paragraph that the hypothesis (i) of Theorem \ref{generalized weiss} is satisfied.    Consider the commutative diagram

$$\xymatrix{F\ar[r]\ar[d]& FC/C^G\ar[d]\\
                F^{ab}\ar[r]& F^{ab}_C},$$
                where the bottom row is just the abelianization of the upper one.
We can apply Lemma \ref{quotient}  and the induction hypothesis to $G/C^G$  to deduce that $F^{ab}_C$ is an $\Z_p[H/C]$-permutation module. Now $F^{ab}_C/M_1\cong  (M_p)_C$ and by Lemma \cite[Lemma 2.4]{McZ}  $(M_p)_C\cong M_p^C$, so $(F^{ab})^C\cong F^{ab}_C$ is a permutation $\Z_p[H/C]$-module.

Thus the hypothesis (ii) of Theorem \ref{generalized weiss} is also satisfied and so  $F^{ab}$ is a permutation $\Z_p[H]$-module.\end{proof}

\section{Permutational abelianization}

\begin{prop}\label{free-by-p} Let $G=F\rtimes C$ be a semidirect product of a group $C=\langle c\rangle$ of order $p$ and a free pro-$p$ group $F$. Then $F^{ab}$ is $\Z_p[C]$-permutation if and only if $C$ is the unique subgroup of $G$ of order $p$ up to conjugation.
 \end{prop}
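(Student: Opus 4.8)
The statement asserts an equivalence: for $G = F \rtimes C$ with $|C| = p$, the module $F^{ab}$ is $\Z_p[C]$-permutation if and only if $C$ is the unique conjugacy class of subgroups of order $p$ in $G$. The ``if'' direction is essentially Lemma \ref{free-by-Cp}(ii): under the hypothesis that $G$ has a single conjugacy class of subgroups of order $p$, that lemma gives $F^{ab} = M_1 \oplus M_p$ with $M_1$ trivial and $M_p$ free, hence permutation. So the real content is the converse: assuming $F^{ab}$ is $\Z_p[C]$-permutation, I must produce, for an arbitrary order-$p$ subgroup $C' = \langle g \rangle$ of $G$, an element conjugating $C'$ to $C$.

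The plan for the converse is to reduce to counting, or rather to the structure theory of $G$ acting on a pro-$p$ tree. Since $C'$ is a torsion subgroup, it is conjugate into some maximal finite subgroup; but here $G/F \cong C$ has order $p$, so every finite subgroup has order $1$ or $p$, and the image of $C'$ in $C$ must be all of $C$ (else $C' \leq F$ is torsion-free). Thus $g$ maps to a generator of $C$, and after multiplying by an element of $F$ we may write any such $g$ as $fc$ for suitable $f \in F$ and a fixed generator $c$ of $C$. The question becomes: is every element of $F$ of the form $f$ with $(fc)^p = 1$ conjugate (by an element of $F$) to an element with $f$ in a controlled position? Equivalently, using the standard correspondence, conjugacy classes of complements to $F$ over $C$ (equivalently, of order-$p$ subgroups surjecting onto $C$) are classified by a non-abelian cohomology set $H^1(C, F)$. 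The key observation is that the abelianization map $F \to F^{ab}$ induces a map $H^1(C,F) \to H^1(C, F^{ab})$, and when $F^{ab}$ is $\Z_p[C]$-permutation — so a direct sum of copies of $\Z_p$ (trivial) and $\Z_p[C]$ (free) — one computes $H^1(C, F^{ab}) = 0$ because $H^1(C,\Z_p) = \Hom(C,\Z_p) = 0$ (as $\Z_p$ is torsion-free) and $H^1(C, \Z_p[C]) = 0$ (free modules are cohomologically trivial). So every order-$p$ subgroup becomes, after abelianization, conjugate to $C$; the task is to lift this back to $F$ itself.

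For the lift I would argue by an approximation / nilpotent-quotient induction: working in the lower central (or lower $p$-central) series $F = F_1 \geq F_2 \geq \cdots$ with $\bigcap F_n = 1$, each $F/F_{n}$ is a finite (or finitely generated nilpotent) $p$-group with a $C$-action, and $F_n/F_{n+1}$ is a subquotient built functorially from $F^{ab}$ — hence, since $F^{ab}$ is permutation and these graded pieces of free Lie algebras over permutation modules remain cohomologically tractable, one shows $H^1(C, F_n/F_{n+1}) = 0$ for all $n$. Then a standard successive-approximation argument (each obstruction to conjugating $C'$ into $C$ modulo $F_{n+1}$, once it vanishes modulo $F_n$, lies in $H^1(C, F_n/F_{n+1}) = 0$, and the conjugating elements converge since the $F_n$ shrink to $1$) produces an actual element of $F$ conjugating $C'$ to $C$. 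The main obstacle I anticipate is precisely controlling the cohomology of the graded pieces $F_n/F_{n+1}$: one needs that the free Lie-algebra functors applied to a permutation $\Z_p[C]$-module still yield modules with vanishing $H^1(C,-)$, which should follow from the fact that over $C = C_p$ a module is cohomologically trivial iff it is, up to trivial-module summands, free, and from keeping track that the trivial summands contribute $\Hom(C_p, \Z_p) = 0$; alternatively one may sidestep the graded computation by invoking the already-proved dictionary between such semidirect products and special HNN-extensions (Corollary \ref{abelianization of special HNN} and Proposition \ref{perm-ext0}), reading off that the permutation structure of $F^{ab}$ with a \emph{single} free-factor type $\Z_p[K/A_i]$ with $A_i = C$ forces a single conjugacy class of order-$p$ subgroups.
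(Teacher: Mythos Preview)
Your ``if'' direction is fine and matches the paper.

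The ``only if'' direction has a genuine gap. The key claim --- that the lower-central graded pieces $\gamma_n(F)/\gamma_{n+1}(F)$ satisfy $H^1(C,-)=0$ whenever $F^{ab}$ is $\Z_p[C]$-permutation --- is false already for $p=2$. Take $F$ free pro-$2$ on $x,y$ with $c$ swapping them: then $F^{ab}\cong\Z_2[C]$ is permutation, but $\gamma_2(F)/\gamma_3(F)\cong\Z_2$ is generated by the image of $[x,y]$, on which $c$ acts by $-1$, so $H^1(C,\gamma_2/\gamma_3)\cong\Z_2/2\Z_2\neq 0$. Concretely, $c':=[x,y]\,c$ has order $2$ and no $g\in\gamma_2(F)$ conjugates $c'$ into $c\,\gamma_3(F)$ (the equation $2\bar g\equiv -\overline{[x,y]}$ has no solution), so your successive approximation with $g_n\in\gamma_n$ stalls at $n=2$. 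The element $c'$ \emph{is} conjugate to $c$ --- by $g=xy\notin\gamma_2$ --- but allowing conjugators outside $\gamma_n$ destroys convergence. The underlying issue is that Lie functors do not preserve the permutation property: here $\gamma_2/\gamma_3\cong\Lambda^2(\Z_2[C_2])$ is the sign lattice, neither trivial nor free. (Switching to the lower $p$-central series does not help: its graded pieces are $\F_p$-modules, and already $H^1(C_p,\F_p)\neq 0$ for the trivial module.) Your fallback via Corollary~\ref{abelianization of special HNN} and Proposition~\ref{perm-ext0} is circular: those compute $\tilde F^{ab}$ \emph{given} that $\tilde G$ is a special HNN-extension, and the converse direction (Theorem~\ref{perm HNN}) is proved later using Lemma~\ref{H=He}, whose induction base is precisely the present proposition.

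The paper instead argues by contradiction. It embeds $G$ into $G_0=(C\times H)\amalg H_0$ with $H,H_0$ free pro-$p$; given a non-conjugate order-$p$ subgroup $T\leq G$, it passes to an open $U\leq G_0$ containing $G$ in which $C$ and $T$ remain non-conjugate. The Kurosh decomposition of $U$ then shows $\langle C,T\rangle\cong C\amalg T$ inside $G$ and provides a retraction $U\to C\amalg T$; restricting to $F$ exhibits $(F\cap(C\amalg T))^{ab}$ as a finitely generated direct $\Z_p[C]$-summand of $F^{ab}$. By \cite[Lemma~6]{HZ 08} this summand is not permutation, contradicting Proposition~\ref{perm modules}(ii). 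The idea you are missing is this reduction to a finite-rank retract where non-permutation is already known.
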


\begin{proof} `if' is a particular case of Theorem \ref{permutation module}.

`Only if'.  By \cite[Proposition 1.3]{crelle} (or by Lemma \ref{mono}) combined with Lemma \ref{free-by-Cp} $G$ embeds into a pro-$p$ group $G_0=(C_p\times H)\amalg H_0$, where $C_p$ is of order $p$ and $H,H_0$ are free pro-$p$. Thus w.l.o.g we may assume that $G\leq G_0$ and $C=C_p$.  By the pro-$p$ version of the Kurosh subgroup theorem (see \cite[Theorem 9.1.9]{RZ})  any open subgroup $U$ of $G_0$ containing $G$ admits  a decomposition $U=\coprod_{g\in S_U} (U\cap (C\times H)^g) \amalg H_U$ for some free pro-$p$ group $H_U$, where $S_U$ is the set of representatives of $U\backslash G_0/(C\times H)$.  Let $T$ be a subgroup of $G$ order $p$ which is not conjugate to $C$. Then we can choose $U$ such that $C$ and $T$ are not conjugate in $U$ and in fact assume w.l.o.g $T=C^g$ for some $g\in S_U$. Thus we can regard $C\amalg T$ as a subgroup of $G$ and $U$. Put $\bar F=F\cap (C\amalg T)$ and note that it is finitely generated. Then the natural splitting epimorphism $U\longrightarrow C\amalg T=\bar F\rtimes C$ restricted to $F$ gives the following commutative diagram

$$\xymatrix{\bar F\ar[r]\ar[d] &F\ar[r]\ar[d]& \bar F\ar[d]\\
            \bar F^{ab}\ar[r]&F^{ab}\ar[r]& \bar F^{ab}}$$ with upper and lower composition maps being identity.
               So $\bar F^{ab}$ is a finitely generated direct summand of $F^{ab}$ and by \cite[Lemma 6]{HZ 08} $\bar F^{ab}$ is not a permutation $\Z_p[C]$-module. By Proposition \ref{perm modules}(ii) this contradicts  to the hypothesis that $F^{ab}$ is $\Z_p[C]$-permutation.

\end{proof}

From now on  we shall assume in this section that $G=F\rtimes H$ with $H\neq \{1\}$ and $F^{ab}$  being a $\Z_p[H]$-permutation module, i.e. $F^{ab}=\bigoplus_{e\in E} M_e$, where $M_e= \prod_{x\in X_e} \Z_p[H/H_e]=\Z_p[H/H_e]\hat \otimes \Z_p[[X_{e},*]]$ and $\{H_e, e\in E\}$ is a set of representatives of
conjugacy classes of certain subgroups of $H$ and $(X_{e},*)$ is the one point compactification of $X_{e}$. Thus we view $X_e$ as a subset of $M_e$ and in fact $(X_e,*)$ is a pointed basis of $M_e$ as explained in Remark \ref{basis}.

We shall relate centralizers of subgroups of $H$ in $F$ with the $\Z_p[H]$-module structure of $F^{ab}$.

\begin{lem} \label{H=He} If there exists $e_0\in E$ such that $H_{e_0}=H$ for some $e_0\in E$, then the decomposition $F^{ab}=\bigoplus_{e\in E} M_e$ can be rearranged such that   $C_F(H)[F,F]/[F,F]= M_{e_0}=\Z_p[[X_{e_0},*]]$ is a direct $\Z_p[H]$-trivial summand of $F^{ab}$. \end{lem}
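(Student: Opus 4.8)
The plan is to induct on the order of $H$. Two preliminary remarks organise the argument. Since the $H_e$ run through a set of representatives of the conjugacy classes of subgroups of $H$ and $H$ is conjugate only to itself, the index $e_0$ with $H_{e_0}=H$ is unique; hence $M_{e_0}=\Z_p[H/H]\hat\otimes\Z_p[[X_{e_0},*]]=\Z_p[[X_{e_0},*]]$ is a $\Z_p[H]$-trivial direct summand, and — since $\Z_p[H/H_e]$ has no $\Z_p[H]$-trivial direct summand when $H_e\neq H$ — it is, up to the rearrangements allowed by Proposition \ref{perm modules}(ii), the \emph{maximal} $\Z_p[H]$-trivial direct summand of $F^{ab}$. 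On the other hand $\bar C:=C_F(H)[F,F]/[F,F]$ is automatically a $\Z_p[H]$-trivial submodule, because $H$ centralises $C_F(H)$. So the whole content of the lemma is the equality of $\bar C$ with the maximal trivial summand; granting it, Proposition \ref{perm modules}(ii) lets one rearrange $F^{ab}=\bigoplus_{e}M_e$ so that $M_{e_0}=\bar C$.

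For the base case $|H|=p$ I would use Proposition \ref{free-by-p}: the hypothesis that $F^{ab}$ is $\Z_p[H]$-permutation forces $H$ to be the unique subgroup of $G$ of order $p$ up to conjugacy, so Lemma \ref{free-by-Cp} applies and presents $C_F(H)$ as a free factor of $F$ with $\bar C=M_1$ the $\Z_p[H]$-trivial summand and $M_p$ its free complement. Thus $\bar C$ is the maximal trivial summand, and after a rearrangement $M_{e_0}=\bar C=\Z_p[[X_0,*]]$, $X_0$ being the basis of $C_F(H)$.

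For the inductive step, with $|H|>p$, I would pick a central subgroup $C\le H$ of order $p$ and apply the same reasoning to the subgroup $F\rtimes C$ of $G$. Restriction preserves permutation modules, so $\rst^H_C F^{ab}$ is $\Z_p[C]$-permutation; Proposition \ref{free-by-p} then makes $C$ the unique order-$p$ subgroup of $F\rtimes C$ up to conjugacy, and Lemma \ref{free-by-Cp} supplies a free factor $D:=C_F(C)$ of $F$ whose image in $F^{ab}$ is the $\Z_p[C]$-trivial summand $M_1$. Because $C$ is normal in $H$, $\rst^H_C\Z_p[H/H_e]$ is $\Z_p[C]$-trivial precisely when $C\le H_e$, so after a rearrangement one may take $M_1=\bigoplus_{e:\,C\le H_e}M_e$. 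Now $H$ normalises $D$ and $C$ acts trivially on it, hence $H/C$ acts on the free pro-$p$ group $D$; the $H$-equivariant inclusion $D^{ab}\hookrightarrow F^{ab}$ (injective since $D$ is a free factor, so $[D,D]=D\cap[F,F]$) identifies $D^{ab}$ with $M_1$ as a $\Z_p[H/C]$-module, namely with $\bigoplus_{C\le H_e}\Z_p[(H/C)/(H_e/C)]\hat\otimes\Z_p[[X_e,*]]$, a $\Z_p[H/C]$-permutation module still containing $M_{e_0}=\Z_p[[X_{e_0},*]]$ as its unique summand with full stabiliser $H/C$ (using $H_{e_0}=H$). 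One checks $C_D(H/C)=C_F(H)$ — an element of $F$ commuting with $C$ lies in $D$, and then commutes with $H$ iff it is fixed by the induced $H/C$-action — so the image of $C_F(H)$ in $D^{ab}$ is exactly $\bar C$, sitting inside $M_1\subseteq F^{ab}$. Since $H/C$ is a nontrivial $p$-group of smaller order and $D^{ab}$ has a full-stabiliser summand, the inductive hypothesis applies to $D\rtimes(H/C)$ and yields $\bar C=C_D(H/C)[D,D]/[D,D]=M_{e_0}=\Z_p[[X_{e_0},*]]$ after a final rearrangement; as $M_1$ is the $\Z_p[C]$-trivial part of $F^{ab}$, this $\bar C$ is then also the maximal $\Z_p[H]$-trivial summand of $F^{ab}$, as wanted.

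The step I expect to require the most care is not a single computation but keeping the reductions coherent: permutation decompositions are not canonical, so at every stage — $M_1$ versus $\bigoplus_{C\le H_e}M_e$, the trivial summand of $D^{ab}$ versus $M_{e_0}$, and $\bar C$ versus both — one must invoke Proposition \ref{perm modules}(ii) to rearrange so that these modules coincide on the nose rather than merely up to isomorphism. The one substantive input is Lemma \ref{free-by-Cp}(i): its explicit free pro-$p$ product decomposition of $F\rtimes C$ realises $C_F(C)$ as an honest free factor of $F$, which is exactly what guarantees $\bar C$ is \emph{all} of the trivial summand and not a proper sub-summand — without it the argument would only give the inclusion $\bar C\subseteq M_{e_0}$.
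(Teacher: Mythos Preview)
Your argument follows the same inductive scheme as the paper's proof --- pick a central $C\le H$ of order $p$, identify $M_1:=C_F(C)[F,F]/[F,F]$ with the $\Z_p[C]$-trivial part of $F^{ab}$, and then apply the induction hypothesis to $C_F(C)\rtimes H/C$ --- and the base case is handled identically. There is, however, a genuine gap at the step where you write ``after a rearrangement one may take $M_1=\bigoplus_{e:\,C\le H_e}M_e$''.

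Proposition \ref{perm modules}(ii), which you intend to invoke for the rearrangement, takes as input a $\Z_p[H]$-\emph{direct summand} of the permutation module. At this point you only know that $M_1$ is an $H$-invariant submodule which happens to be a $\Z_p[C]$-direct summand; you have not shown it is a $\Z_p[H]$-direct summand. Maximal $\Z_p[C]$-trivial direct summands are not unique as submodules, so the fact that both $M_1$ and $\bigoplus_{C\le H_e}M_e$ are maximal $C$-trivial and $H$-invariant does not by itself identify them, nor does it force $M_1$ to split off over $\Z_p[H]$. The paper closes precisely this gap by invoking Theorem \ref{generalized weiss}: hypothesis (i) there is exactly the decomposition $\rst^H_C(F^{ab})=M_1\oplus M_p$ with $M_1$ $H$-invariant that Lemma \ref{free-by-Cp} produces, and hypothesis (ii) --- that $(F^{ab})^C$ is $\Z_p[H/C]$-permutation --- follows from Remark \ref{converse} since $F^{ab}$ is already assumed $\Z_p[H]$-permutation. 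The conclusion of Theorem \ref{generalized weiss} then gives that $M_1$ is a $\Z_p[H]$-direct summand, after which your rearrangement via Proposition \ref{perm modules}(ii) is legitimate and the remainder of your argument goes through. In short, the ``one substantive input'' you flag at the end is not Lemma \ref{free-by-Cp}(i) alone: the generalized Weiss theorem is an equally essential ingredient, and it is the step your write-up omits.
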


\begin{proof}  We argue by induction on $H$ using Proposition \ref{free-by-p} as the base of induction. Let $C$ be a central subgroup of order $p$ in $H$.  Then ${\rm Res}_C^H(F^{ab})$ is a permutation $\Z_p[C]$-module and so by Proposition \ref{free-by-p} $FC$ has only one conjugacy class of groups of order $p$. Therefore by Lemma \ref{free-by-Cp} (ii)   $F^{ab}=M_1\oplus M_p$, where $$M_1=C_F(C)^{ab}=C_F(C)[F,F]/[F,F]$$ is a trivial $\Z_p[C]$-module     and $M_p$ is a free $\Z_p[C]$-module generated by $X_C[F,F]/[F,F]$.  Moreover, $M_1$ is $\Z_p[H]$-submodule since $C_F(C)$ is $H$-invariant. Note also that $M^C$ is $\Z_p[H/C]$-permutation since $M$ is $\Z_p[H]$-permutation (see Remark \ref{converse}). Hence by Theorem \ref{generalized weiss} $M_1$ is a direct summand of $F^{ab}$. Then taking into account that $\Z_p[H/H_e]$ is trivial $\Z_p[C]$-module if and only if $C\leq H_e$ one deduces from  Proposition \ref{perm modules} (ii) that  the decomposition  $F^{ab}=\bigoplus_{e\in E} M_e$ can be rearranged such that $M_1=\bigoplus_{C\leq H_e} M_e$. Now $C_F(H)\leq C_F(C)$, $HC_F(C)/C=C_F(C)\rtimes H/C$  and $C_F(C)^{ab}=M_1$, so  by induction hypothesis  $C_F(H)[F,F]/[F,F]=M_{e_0}$ as required.

\end{proof}

\begin{prop} \label{abelianization general centralizers} Let $(X_e,*)$ be a pointed profinite basis of $M_e$, $e\in E$. Then   
the decomposition $\bigoplus_{e\in E}M_e$ can be rearranged such that  
$$X_e\subset C_F(H_{e})[F,F]/[F,F]. \eqno(1)$$ for all $e\in E$.

 \end{prop}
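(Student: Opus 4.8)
The plan is to induct on $|H|$. The base cases are $H=\{1\}$ (trivial) and $|H|=p$, the latter being immediate from Proposition~\ref{free-by-p} and Lemma~\ref{free-by-Cp}: there $F^{ab}=M_1\oplus M_p$ with $M_1=C_F(H)[F,F]/[F,F]$ trivial and $M_p$ free, and after splitting the given basis one gets $(1)$ from the last assertion of Lemma~\ref{free-by-Cp} for the summands with $H_e=H$, while it is vacuous for the summands with $H_e=\{1\}$. For the inductive step fix a central $C\le H$ of order $p$. Restricting $F^{ab}$ to $\Z_p[C]$ and applying Proposition~\ref{free-by-p}, the group $FC$ has a single conjugacy class of subgroups of order $p$, so Lemma~\ref{free-by-Cp} gives $F^{ab}=M_1\oplus M_p$ with $M_1=C_F(C)[F,F]/[F,F]$ $\Z_p[C]$-trivial and $M_p$ $\Z_p[C]$-free. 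Exactly as in the proof of Lemma~\ref{H=He} (via Theorem~\ref{generalized weiss}, Proposition~\ref{perm modules}(ii), and the observation that $\Z_p[H/H_e]$ is $\Z_p[C]$-trivial precisely when $C\le H_e$), one rearranges $\bigoplus_{e\in E}M_e$ so that $M_1=\bigoplus_{e\in E'}M_e$ and $M_p=\bigoplus_{e\in E''}M_e$, with $E'=\{e:C\le H_e\}$ and $E''=\{e:C\not\le H_e\}$. As $M_1$ and $M_p$ are fixed complementary $\Z_p[H]$-summands, their subdecompositions may be rearranged independently, so it suffices to establish $(1)$ over $E'$ and over $E''$ separately.

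For $e\in E'$ I would work in $C_F(C)\rtimes(H/C)$. By Lemma~\ref{free-by-Cp}(i), $C_F(C)$ is a free factor of $F$, hence $C_F(C)^{ab}=M_1$ is a $\Z_p[H]$-summand of $F^{ab}$ and $[C_F(C),C_F(C)]=C_F(C)\cap[F,F]$; moreover $C$ acts trivially on $C_F(C)$, so $H/C$ acts on it and $M_1$ is a $\Z_p[H/C]$-permutation module with decomposition $\bigoplus_{E'}M_e$ and stabilizers $H_e/C$. Since $|H/C|<|H|$, the inductive hypothesis rearranges $\bigoplus_{E'}M_e$ so that $X_e\subseteq C_{C_F(C)}(H_e/C)[C_F(C),C_F(C)]/[C_F(C),C_F(C)]$ for all $e\in E'$. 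Now $C\le H_e$ forces $C_F(H_e)\subseteq C_F(C)$, and triviality of the $C$-action gives $C_{C_F(C)}(H_e/C)=C_F(H_e)$; together with $[C_F(C),C_F(C)]=C_F(C)\cap[F,F]$ this rewrites the inclusion as $X_e\subseteq C_F(H_e)[F,F]/[F,F]$, i.e.\ $(1)$ for $e\in E'$.

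For $e\in E''$ the natural move is to pass to $\bar G=G/C^G$. Since $C$ is central in $H$, the normal closure $C^G$ lies in $CF$ and equals $C\ltimes[C,F]$ with $[C,F]\le F$ a free pro-$p$ normal subgroup; thus $F\cap C^G=[C,F]$, $\bar F:=F/[C,F]$ is free pro-$p$, $\bar G=\bar F\rtimes(H/C)$, and $\bar F^{ab}=(F^{ab})_C=M_1\oplus(M_p)_C$. For $e\in E''$ one has $H_e\cap C^G\subseteq H_e\cap CF\subseteq H\cap CF=C$ and $C\cap H_e=\{1\}$, so $H_e\cap C^G=\{1\}$ and $H_e$ maps isomorphically onto $\bar H_e:=H_eC^G/C^G$; the quotient map then carries $M_e=\prod_{X_e}\Z_p[H/H_e]$ onto $(M_e)_C\cong\prod_{X_e}\Z_p[(H/C)/\bar H_e]$, a $\Z_p[H/C]$-permutation module, sending the pointed basis $X_e$ to a pointed basis $\bar X_e$. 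The inductive hypothesis applied to $\bar G$ rearranges $\bigoplus_{E''}(M_e)_C$ so that $\bar X_e\subseteq C_{\bar F}(\bar H_e)[\bar F,\bar F]/[\bar F,\bar F]$; what remains is to lift this to a rearrangement of $\bigoplus_{E''}M_e$ that witnesses $(1)$.

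I expect this last lifting step to be the main obstacle: unlike in the $E'$ case, $(M_p)_C$ is a \emph{quotient} of $M_p$ and not a submodule of $F^{ab}$, so the conclusion over $\bar G$ does not descend automatically. To carry it through I would, for each chosen pointed basis element, pick $\bar w\in C_{\bar F}(\bar H_e)$ with $\bar w[\bar F,\bar F]$ equal to it, lift $\bar w$ to $w\in F$, and note that $[w,h]\in F\cap C^G=[C,F]$ for all $h\in H_e$; then, using $H_e\cap C^G=\{1\}$ and the fact that $[C,F]$ is a free pro-$p$ group on which $H_e$ acts, correct $w$ by an element of $[C,F]$ to an element $w'\in C_F(H_e)$; and finally, invoking Proposition~\ref{perm modules}(ii) and Lemma~\ref{H=He} applied to $FH_e$ (which makes $C_F(H_e)[F,F]/[F,F]$ a $\Z_p[H_e]$-summand of $F^{ab}$ whose complement has no $\Z_p[H_e]$-trivial summand), check that these corrections, carried out coherently across the basis, assemble into an honest $\Z_p[H]$-module rearrangement of $\bigoplus_{E''}M_e$ with $X_e\subseteq C_F(H_e)[F,F]/[F,F]$. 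The genuinely delicate point is this correction of $w$ inside $[C,F]$ while keeping its image in $F^{ab}$ under control.
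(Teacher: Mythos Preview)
Your treatment of the $E'$ case is correct and essentially the same as what underlies Lemma~\ref{H=He}. The $E''$ case, however, has a real gap that you yourself flag but do not close. Given $\bar w\in C_{\bar F}(\bar H_e)$ and a lift $w\in F$, the map $h\mapsto c_h:=h^{-1}w^{-1}hw$ is a (non-abelian) $1$-cocycle of $H_e$ with values in $[C,F]$, and finding $u\in[C,F]$ with $wu\in C_F(H_e)$ is exactly asking that this cocycle be a coboundary. Nothing in the hypotheses forces $H^1(H_e,[C,F])$ to vanish, and neither $H_e\cap C^G=\{1\}$ nor freeness of $[C,F]$ helps here; so the surjectivity of $C_F(H_e)\to C_{\bar F}(\bar H_e)$ (even onto the particular elements you need) is unjustified. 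Even granting individual lifts, assembling them into a genuine $\Z_p[H]$-rearrangement of $\bigoplus_{E''}M_e$ is a second, separate problem you have not addressed; and there is a further wrinkle that in $\bar F^{ab}$ the summands coming from $E'$ and $E''$ can have the same stabiliser $H_e/C\cong H_eC/C$, so the induction on $\bar G$ does not a priori rearrange the $E''$ part independently.

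The paper sidesteps all of this by inducting \emph{downwards} rather than via a quotient: for each $e$ with $H_e\neq H$ it picks a subgroup $K\le H$ of index $p$ containing $H_e$ and applies the induction hypothesis to $F\rtimes K$. Since $F$ (and hence $C_F(H_e)$ and $[F,F]$) is unchanged under this restriction, the inclusion $X_e\subset C_F(H_e)[F,F]/[F,F]$ obtained for $e\in E_K=\{e:H_e\le K\}$ requires no lifting at all. The different choices of $K$ are then combined by a short maximality argument using Proposition~\ref{perm modules}(ii): if $E_0\subseteq E$ is maximal with a decomposition satisfying $(1)$ on $E_0$, any $e'\notin E_0$ lies in some $E_K$, giving a summand $M'_{e'}\cong M_{e'}$ with the right basis, and Proposition~\ref{perm modules}(ii) lets one swap it in, contradicting maximality. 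This restriction-plus-maximality route is the missing idea; your quotient route would need an independent argument for the vanishing of the relevant $H^1$, which is not available.
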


\begin{proof}  We use induction on $|H|$.

If $H=H_{e_*}$ for some $e_*\in E$ then  by Lemma \ref{H=He}   $F^{ab}=\bigoplus_{e\in E}M_e$ can be rearranged such that   $C_F(H)[F,F]/[F,F]=M_{e_*}$,  is a $\Z_p[H]$-trivial direct summand of $F^{ab}$. So we are done if $E=\{e_*\}$.

         Let $K$ be a subgroup of $H$ of index $p$.  If $H_e\leq K$, then   ${\rm Res}^H_K(M_e)=\prod_{x\in X_e} \bigoplus_{r\in R}  \Z_p[[K/H_e^{r}]]$, where $R$ is a set of  coset representatives of $H/K$. Otherwise, ${\rm Res}^H_K(M_e)=\prod_{x\in X_e}   \Z_p[[K/H_e\cap K]]$.  So putting $E_K=\{e\in E\mid H_e\leq K\}$ one has $${\rm Res}^H_K(F^{ab})=\bigoplus_{e\in E_K} \prod_{x\in X_e} \bigoplus_{r\in R}  \Z_p[[K/H_e^{r}]]\oplus\bigoplus_{e\in E\setminus E_K}\prod_{x\in X_e}  \Z_p[[K/H_e\cap K]]. $$

    By induction hypothesis (applied to $FK$) this decomposition can be rearranged such that    $X_e\subset C_F(H_{e})[F,F]/[F,F]$ for every $e\in E_K$ (in fact $rX_e\subset C_F(H_{e}^r)[F,F]/[F,F]$ for every $r\in R$ and $e \in E$, but we need it only for $r=1$). Since $\bigoplus_{r\in R}\prod_{x\in X_e}  \Z_p[[K/H_e^{r}]]=M_e$  the  statement is proved for every $e\in E_K$. 
    
    Let $E_0$ be a maximal subset of  $ E$ such that there exists a decomposition of $F^{ab}$ with (1)  satisfied for all $e\in E_0$. Since for a given $e\neq e_*$ there exists  $K$ of $H$ of index $p$ containing $H_e$,  the preciding paragraph shows  that  if $E\neq \{e_*\}$ then $E_0$ is not empty.

     Pick $e'\in E\setminus E_0$. Let $K$ be a subgroup of index $p$ in $H$ containing $H_{e'}$. Then by above there is a maximal direct summand $M'_{e'}$ of $F^{ab}$ isomorphic to $M_{e'}$ satisfying the statement of the proposition. Then by Proposition \ref{perm modules} (ii) $F^{ab}=\bigoplus_{e\neq e'} M_{e}\oplus M'_{e'}$ contradicting the maximality of $E_0$. Thus $E_0=E$ and the proposition is proved.
\end{proof}

\section{Special HNN-extensions}
We are ready to prove Theorem \ref{HNN}, where
 we shall keep the notation $M_e= \prod_{x\in X_e}\Z_p[H/H_e]$ viewing $X_e$ as a subset of $M_e$ (cf. Remark \ref{basis}).

\begin{thm}\label{perm HNN} Let $G=F\rtimes H$ be a semidirect product of a finite $p$-group $H$ and a free pro-$p$ group $F$. Suppose $F^{ab}=\bigoplus_{e\in E} M_e$ is $\Z_p[H]$-permutation module. Then $G={\rm HNN}(H, H_e, X_e)$ is a special HNN-extension with the base group $H$.\end{thm}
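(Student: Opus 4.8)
The plan is to build a homomorphism between the abstract HNN-extension $\tG := \HNN(H, H_e, X_e)$ and $G = F \rtimes H$, and show it is an isomorphism by comparing the two groups after abelianizing the "free part". First I would fix, for each $e \in E$, the pointed basis $(X_e, *)$ of $M_e = \prod_{x\in X_e}\Z_p[H/H_e]$ as in Remark \ref{basis}, and use Proposition \ref{abelianization general centralizers} to arrange the decomposition $F^{ab} = \bigoplus_{e\in E} M_e$ so that $X_e \subset C_F(H_e)[F,F]/[F,F]$ for all $e$. The point of this containment is that it lets me lift each element $x \in X_e \setminus\{*\}$ to an element $\tilde x \in C_F(H_e)$ — a priori only to $C_F(H_e)$ modulo $[F,F]$, so some care is needed; I would choose a continuous section of $(X_e,*) \to C_F(H_e) \to C_F(H_e)[F,F]/[F,F]$ (possible since $(X_e,*)$ is profinite and the map is a continuous surjection of profinite groups/spaces), giving a continuous map $X_e \to C_F(H_e) \subseteq F \subseteq G$ sending $*$ to $1$. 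Because each $\tilde x$ centralizes $H_e$, the data $(H, H_e, \phi_e, X_e)$ with $\phi_e(a,x) = a = \tilde x a \tilde x^{-1}$ defines a \emph{special} HNN-extension, and the universal property of free pro-$p$ products and of $\tG$ yields a continuous homomorphism $\psi : \tG \to G$ restricting to the identity on $H$ and sending the stable letter $x \in X_e$ to $\tilde x$.

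Next I would identify the image and kernel of $\psi$. Let $\tF$ be the normal closure in $\tG$ of $F(\bigcup_e X_e, *)$; Proposition \ref{perm-ext0} and Corollary \ref{abelianization of special HNN} give $\tF$ as an explicit free pro-$p$ product of conjugates of the $F(X_e,*)$ and compute $\tF^{ab} = \bigoplus_{e\in E}\prod_{x\in X_e}\Z_p[H/H_e] = \bigoplus_e M_e$, with the images of $X_e$ forming a pointed basis lying in $C_{\tF}(H_e)[\tF,\tF]/[\tF,\tF]$. Now $\psi$ maps $\tF$ into $F$ (since the $\tilde x$ and hence all their $H$-conjugates lie in $F$, which is normal in $G$), and $\tG = \tF \rtimes H$ maps onto $G = F\rtimes H$ compatibly with the splitting by $H$; so it suffices to show $\psi|_{\tF} : \tF \to F$ is an isomorphism. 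Since both are free pro-$p$ groups, by the standard criterion it is enough to show the induced map on abelianizations $\psi^{ab} : \tF^{ab} \to F^{ab}$ is an isomorphism of pro-$p$ groups (an epimorphism of free pro-$p$ groups with the same — here, isomorphic — abelianization, detected via Nielsen–Schreier/rank counting or directly: a surjection $\tF \to F$ which is bijective on abelianizations is an isomorphism). By construction $\psi^{ab}$ is an $H$-equivariant map carrying the chosen pointed basis $X_e$ of $M_e \subseteq \tF^{ab}$ to the chosen pointed basis $X_e$ of $M_e \subseteq F^{ab}$; hence it is the identity on each $M_e$, so $\psi^{ab}$ is an isomorphism.

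To finish I would check surjectivity of $\psi$ itself: the image contains $H$ and contains the continuous section image $\bigcup_e \tilde X_e$, hence contains the normal closure $\langle \bigcup_e \tilde X_e^{\,G}\rangle$; I must argue this normal closure is all of $F$. This follows because $\psi(\tF)$ is a closed $H$-invariant subgroup of $F$ whose image in $F^{ab}$ is all of $F^{ab}$ (shown in the previous step), and a closed subgroup of a free pro-$p$ group $F$ surjecting onto $F^{ab}$ must equal $F$ (it generates $F$ topologically by the Frattini argument / Nakayama-type lemma for pro-$p$ groups). Combined with $\psi$ being injective on $\tF$ and the identity on $H$ and respecting the semidirect-product structure, $\psi$ is an isomorphism $\tG \xrightarrow{\ \sim\ } G$, which is exactly the assertion that $G = \HNN(H, H_e, X_e)$ as a special HNN-extension with base group $H$.

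\textbf{Main obstacle.} The delicate point is the lifting step: the module-theoretic input only places $X_e$ inside $C_F(H_e)[F,F]/[F,F]$, not inside $C_F(H_e)$ itself, so one must produce a \emph{continuous} section landing in $C_F(H_e)$ and verify that the resulting $\tilde x$ genuinely centralize $H_e$ (so that the relations of a \emph{special} HNN-extension hold, not merely those of a general one). After that, matching the two free pro-$p$ groups $\tF$ and $F$ via their abelianizations is the conceptual heart, and it is clean precisely because Proposition \ref{abelianization general centralizers} was set up to make the bases correspond; everything else is bookkeeping with free pro-$p$ products and the Kurosh-type computation already recorded in Proposition \ref{perm-ext0} and Corollary \ref{abelianization of special HNN}.
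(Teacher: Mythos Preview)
Your proposal is correct and follows essentially the same route as the paper's proof: rearrange the decomposition via Proposition~\ref{abelianization general centralizers}, lift each $(X_e,*)$ continuously into $C_F(H_e)$, define $\psi:\tG\to G$ by the universal property, and conclude by showing that $\psi|_{\tF}$ induces an isomorphism on abelianizations (using Corollary~\ref{abelianization of special HNN} for $\tF^{ab}$), hence is an isomorphism of free pro-$p$ groups. The paper is slightly terser---it dispatches your ``main obstacle'' by citing \cite[Lemma~5.6.5]{RZ} for the existence of the continuous section $C_F(H_e)\to C_F(H_e)[F,F]/[F,F]$, and it collapses your surjectivity/injectivity discussion into the single observation that a map of free pro-$p$ groups inducing an isomorphism on abelianizations is an isomorphism---but the architecture is the same.
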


\begin{proof}

\medskip 
By Proposition \ref{abelianization general centralizers}
 the decomposition $\bigoplus_{e\in E}M_e$ can be rearranged such that for all $e\in E$ the pointed profinite basis  $(X_e,*)$ of $M_e$ is contained in $ C_F(H_e)[F,F]/[F,F]$. Since the natural homomorphism $C_F(H_e)\longrightarrow  C_F(H_e)[F,F]/[F,F]$ admits a continuous section (see \cite[Lemma 5.6.5]{RZ})  the  injective continuous map $\eta^{ab}_e:(X_e,*)\longrightarrow M_e $; $x\longrightarrow 1H_e\hat\otimes x$ defined in Remark \ref{basis} lifts to a continuous map  $\eta_e:(X_e,*)\longrightarrow C_F(H_e)$.
Let $\tilde G={\rm HNN}(H, H_e, X_e)$ be a special HNN-extension and  $f:\tilde G\longrightarrow G$ be a homomorphism given by the universal property that extends $\eta_e$ and sends $H$ identically to $H$. Let $\tilde F=f^{-1}(F)$. Then by Corollary \ref{abelianization of special HNN} $\tilde F^{ab}=\bigoplus_{e\in E} M_e$,  such that   $X_e\subset C_{\tilde F}(H_{e_0})[\tilde F,\tilde F]/[\tilde F,\tilde F]$ for every $e\in E$. Thus one has  the commutative diagram

$$\xymatrix{\tilde F\ar[dr]\ar[rr]^f&&F\ar[dl]\\
&\bigoplus_{e\in E} M_e&}$$
implying that $f_{|\tilde F}$ induces an isomorphism on the abelianizations and therefore  $f$ has to be an isomorphism.\end{proof}

Combining this theorem with Theorem \ref{permutation module} we deduce the following

\begin{cor}\label{permext} Let $G=F\rtimes H$ be a semidirect product of finite $p$-group $H$ and free pro-$p$ group $F$. Suppose every torsion element of $G$ is $F$-conjugate into $H$. Then $G$ is a special HNN-extension with the base group $H$.\end{cor}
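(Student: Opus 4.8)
The final statement is Corollary~\ref{permext}. It follows by simply combining two earlier results, so the proof is short.

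\begin{proof}[Proof proposal]
The plan is to deduce this immediately from the two results it is sandwiched between, namely Theorem~\ref{permutation module} and Theorem~\ref{perm HNN}. First I would invoke Theorem~\ref{permutation module}: since $G = F \rtimes H$ with $H$ a finite $p$-group, $F$ a free pro-$p$ group, and every torsion element of $G$ conjugate into $H$ (the hypothesis ``$F$-conjugate into $H$'' is a fortiori ``conjugate into $H$''), we conclude that $F^{\ab}$ is a $\Z_p[H]$-permutation module. In particular we may write $F^{\ab} = \bigoplus_{e\in E} M_e$ in the standard form $M_e = \prod_{x\in X_e}\Z_p[H/H_e]$ as in the running notation.

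Now the hypothesis of Theorem~\ref{perm HNN} is exactly that $F^{\ab}$ is a $\Z_p[H]$-permutation module, which is what we have just established. Applying Theorem~\ref{perm HNN} we obtain that $G = \HNN(H, H_e, X_e)$ is a special HNN-extension with base group $H$, which is the assertion.

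The only point requiring a word of care is matching the hypotheses: ``every torsion element of $G$ is $F$-conjugate into $H$'' certainly implies ``every torsion element of $G$ is conjugate into $H$'', so Theorem~\ref{permutation module} applies verbatim. No further argument is needed, and indeed there is essentially no obstacle here—the work has all been done in Sections~3--5. (The stronger $F$-conjugacy hypothesis is what one naturally verifies in applications, which is why it is stated in this form.)
\end{proof}
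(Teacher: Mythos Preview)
Your proposal is correct and matches the paper's own argument exactly: the corollary is obtained by first applying Theorem~\ref{permutation module} (using that $F$-conjugacy into $H$ implies conjugacy into $H$) to see that $F^{\ab}$ is a $\Z_p[H]$-permutation module, and then invoking Theorem~\ref{perm HNN}. Nothing further is needed.
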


We are now ready to prove  Theorem \ref{embedding}

\begin{thm}\label{immersion}  Let $F\rtimes H$ be a  semidirect product of a free pro-$p$ group $F$   and   a
finite $p$-group $H$. Then the action of $H$ extends to the action on some free pro-$p$ group  $\tilde F$ containing $F$  such that
$H$ permutes the elements of some (pointed) basis of $\tilde F$.\end{thm}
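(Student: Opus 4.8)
The plan is to deduce Theorem \ref{immersion} from the HNN-machinery already developed, specifically from Corollary \ref{permext} together with Proposition \ref{perm-ext0} and Corollary \ref{abelianization of special HNN}. The starting point is the observation that the given semidirect product $G=F\rtimes H$ is \emph{not} assumed to have all torsion conjugate into $H$, so we cannot apply Corollary \ref{permext} directly. Instead, the idea is to enlarge $G$: first I would embed $G$ into a larger semidirect product $\tilde G=\tilde F\rtimes H$ of a free pro-$p$ group $\tilde F$ (containing $F$) with the \emph{same} complement $H$, arranged so that every torsion element of $\tilde G$ is conjugate into $H$. This is exactly the statement obtained earlier when passing from $G$ to the ``nice'' overgroup in the proof strategy of Theorem \ref{main} (the sentence ``first we embed $G$ into a semidirect product $\tilde G=\tilde F\rtimes H$ \dots such that all torsion elements of $\tilde G$ are conjugate into $H$''); I would cite that construction, which uses \cite{ihes}/\cite{crelle}.

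Once $\tilde G=\tilde F\rtimes H$ has all torsion conjugate into $H$, Corollary \ref{permext} applies and tells us that $\tilde G$ is a special HNN-extension $\mathrm{HNN}(H,H_e,X_e)$ with base group $H$. Now I would invoke Proposition \ref{perm-ext0}: for a special pro-$p$ HNN-extension of a finite $p$-group $K=H$, the normal closure $\tilde F$ of $F(\bigcup_{e}X_e,*)$ decomposes as a free pro-$p$ product $\tilde F=\coprod_{e\in E}\coprod_{r\in R_e}\coprod_{s\in S_e}F(X_e,*)^{s^{-1}r^{-1}}$, and the remark following it records that $H$ acts on this decomposition simply by permuting the free factors. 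Concretely, the union of the conjugates $\bigcup_{e,r,s}\,(s^{-1}r^{-1})\cdot X_e$ (together with the distinguished point) is a pointed basis of $\tilde F$, and the conjugation action of $H$ permutes this set: $h\cdot(s^{-1}r^{-1}x_e)$ is again of the form $(s'^{-1}r'^{-1}x_e)$ because left multiplication by $h$ on the coset spaces $H/N_H(H_e)$ and $N_H(H_e)/H_e$ just permutes the representatives $r\in R_e$, $s\in S_e$ (after adjusting by elements of the relevant subgroups, which act trivially on $X_e$ since $X_e\subseteq C_{\tilde F}(H_e)$ modulo the derived subgroup — and in the special HNN-extension the $X_e$ genuinely centralize $A_e=H_e$). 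Thus $H$ permutes a basis of $\tilde F$, and $F\le\tilde F$, which is precisely the assertion of Theorem \ref{immersion}.

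The main obstacle — and the step that needs the most care — is verifying that the permutation action of $H$ on the free-product decomposition of Proposition \ref{perm-ext0} really translates into $H$ permuting an honest pointed \emph{topological} basis of $\tilde F$, convergent to the distinguished point. The point is that $\coprod_{e}\coprod_{r\in R_e}\coprod_{s\in S_e}F(X_e,*)$ is a free pro-$p$ product of free pro-$p$ groups on pointed spaces, and such a free product is itself free pro-$p$ on the ``disjoint union'' pointed space $\coprod_{e,r,s}(X_e,*)$, i.e.\ the one-point compactification of $\bigsqcup_{e,r,s}(X_e\setminus\{*\})$ — here the finiteness of $E$, $R_e$, $S_e$ is what keeps this a genuine pointed profinite space and makes the basis convergent to $*$. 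I would spell out that the conjugation maps $x\mapsto (s^{-1}r^{-1})x(s^{-1}r^{-1})^{-1}$ are continuous and send this pointed space homeomorphically onto the permuted copy, so that the $H$-action is by a continuous permutation of the basis fixing $*$. The rest is bookkeeping: one checks $H$ normalizes $\tilde F$ (clear, since $\tilde F\trianglelefteq\tilde G$ and $\tilde G=\tilde F\rtimes H$), and that the induced action on the basis is well-defined independent of coset-representative choices, using that $N_H(H_e)/H_e$ and $H_e$ act trivially on $X_e$ as noted. Everything else is routine.
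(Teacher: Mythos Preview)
Your proposal is correct and follows essentially the same route as the paper: embed $G$ into $\tilde G=\tilde F\rtimes H$ with all torsion conjugate into $H$ (the paper cites Lemma~\ref{mono} in the Appendix rather than \cite{ihes}/\cite{crelle}, but these are the same construction), apply Corollary~\ref{permext} to recognise $\tilde G$ as a special HNN-extension $\mathrm{HNN}(H,H_e,X_e)$, and then read off from Proposition~\ref{perm-ext0} that $H$ permutes the free factors $F(X_e,*)^{s^{-1}r^{-1}}$, hence the pointed basis $\bigcup_{e,r,s}(X_e^{s^{-1}r^{-1}},*)$. Your extra paragraph on the topological well-definedness of the basis is more careful than the paper's terse statement, but the argument is the same; the finiteness of $E$, $R_e$, $S_e$ (since $H$ is finite) is indeed what makes the union a genuine pointed profinite space.
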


\begin{proof}   Use an embedding $G=F\rtimes H \longrightarrow \tilde G=\tilde F\rtimes H$ (see Lemma \ref{mono}). Every torsion element of $\tilde G$ is $\tilde F$-conjugate into $H$ so  $\tilde G={\rm HNN}(H, H_e, X_e)$, $e\in E$
 is a special HNN-extension by Corollary \ref{permext}. By Lemma \ref{perm-ext0}

$$\tilde F =\coprod_{e\in E}\coprod_{r\in R_e}\coprod_{s\in S_e}F(X_e,*)^{s^{-1}r^{-1}},$$ is a free pro-$p$ product of free pro-$p$ groups $F(X_e,*)$ on the pointed basis $(X_e,*)$ where  $R_e$, $S_e$ are coset representative sets of
$H/N_H(H_e)$ and  $N_H(H_e)/H_e$ respectively. Since $H_e$ centralizes $X_e$,  the action of $H$ on $E$ by conjugation  permutes the free factors  $F(X_e,*)^{s^{-1}r^{-1}}$ with

 $$\bigcup_{e\in E}\bigcup_{r\in R_e}\bigcup_{s\in S_e}(X_e^{s^{-1}r^{-1}},*),$$
 being the $H$-invariant pointed basis of $\tilde F$.

\end{proof}

Now Theorem \ref{main} follows easily.

\begin{thm}\label{acting on trees} Let $G$ be a virtually free pro-$p$ group. Then $G$ acts on a pro-$p$ tree with finite vertex stabilizers.

\end{thm}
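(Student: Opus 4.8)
The plan is to reduce the general case of Theorem~\ref{acting on trees} to Corollary~\ref{permext} (equivalently Theorem~\ref{perm HNN}) via a standard embedding trick. First I would take an open free pro-$p$ subgroup $F \trianglelefteq G$; replacing $F$ by its core we may assume $F$ is normal in $G$, so that $G$ is an extension of the finite $p$-group $H := G/F$ by $F$. Such an extension need not split, but by Lemma~\ref{mono} (the embedding result used already in the proofs of Theorems~\ref{immersion} and~\ref{perm HNN}) we can embed $G$ into a semidirect product $\tilde G = \tilde F \rtimes H$ where $\tilde F$ is a free pro-$p$ group containing $F$, with the subgroup $H$ complementing $\tilde F$ in $\tilde G$. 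The key extra feature of this embedding (as used in the proof of Theorem~\ref{immersion}) is that every torsion element of $\tilde G$ is $\tilde F$-conjugate into $H$.

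Next I would apply Corollary~\ref{permext} to $\tilde G = \tilde F \rtimes H$: since every torsion element of $\tilde G$ is $\tilde F$-conjugate into $H$, it says $\tilde G = \HNN(H, H_e, X_e)$ is a special pro-$p$ HNN-extension with finite base group $H$. By Remark~\ref{action on tree}, such a pro-$p$ HNN-extension acts on the standard pro-$p$ tree $S = S(\tilde G)$, with vertex set $\tilde G/H$ and edge set $\bigcup_{e \in E}\tilde G/H_e \times (X_e \setminus \{*\})$; in particular all vertex stabilizers are conjugates of $H$, hence finite. Restricting this action along the inclusion $G \hookrightarrow \tilde G$, the group $G$ acts on the same pro-$p$ tree $S$. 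The stabilizer in $G$ of a vertex is $G \cap \tilde g H \tilde g^{-1}$ for some $\tilde g \in \tilde G$, which is a subgroup of a finite group and hence finite. This gives the desired action of $G$ on a pro-$p$ tree with finite vertex stabilizers.

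I do not expect a serious obstacle here, since the main analytic content has already been packaged into Theorem~\ref{permutation module} (permutationality of the abelianization), Theorem~\ref{perm HNN}/Corollary~\ref{permext} (the HNN decomposition), and Remark~\ref{action on tree} (the standard tree); the only points to be careful about are that one may legitimately pass to a normal open free pro-$p$ subgroup (replace $F$ by $\bigcap_{g}F^g$, a finite intersection since $[G:F]<\infty$, which is still open, free pro-$p$ by Serre's theorem, and normal), and that the embedding $G \hookrightarrow \tilde F \rtimes H$ of Lemma~\ref{mono} does indeed have the property that torsion is $\tilde F$-conjugate into $H$ — but this is exactly what was invoked in the proof of Theorem~\ref{immersion}, so it is available. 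The mildest subtlety is purely bookkeeping: checking that the $G$-action on $S$ is continuous and that $S$ is still a pro-$p$ tree as a $G$-space, which is immediate since being a pro-$p$ tree is a property of $S$ as a profinite graph (via the exact sequence in Remark~\ref{action on tree}) and does not depend on the acting group.

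\begin{proof} Let $F$ be an open free pro-$p$ subgroup of $G$. Replacing $F$ by $\core_G(F)=\bigcap_{g\in G/F}F^g$, which is open in $G$, free pro-$p$ by Serre's theorem \cite{S 65}, and normal in $G$, we may assume $F\trianglelefteq G$ with $H:=G/F$ a finite $p$-group. By Lemma \ref{mono} there is an embedding $G\hookrightarrow \tilde G=\tilde F\rtimes H$ into a semidirect product of a free pro-$p$ group $\tilde F\supseteq F$ with $H$, such that every torsion element of $\tilde G$ is $\tilde F$-conjugate into $H$. By Corollary \ref{permext}, $\tilde G={\rm HNN}(H,H_e,X_e)$, $e\in E$, is a special pro-$p$ HNN-extension with finite base group $H$. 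By Remark \ref{action on tree}, $\tilde G$ acts on the pro-$p$ tree $S=S(\tilde G)$ with $V(S)=\tilde G/H$ and $E(S)=\bigcup_{e\in E}\tilde G/H_e\times(X_e\setminus\{*\})$; every vertex stabilizer in $\tilde G$ is a conjugate of $H$, hence finite. Restricting along $G\hookrightarrow\tilde G$, the group $G$ acts continuously on the pro-$p$ tree $S$, and the stabilizer in $G$ of any vertex is contained in a conjugate of $H$, hence finite. This proves the theorem.
\end{proof}
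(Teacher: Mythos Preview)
There is a genuine gap in your reduction step. Lemma~\ref{mono} and the Appendix construction take as input a \emph{semidirect} product $G=F\rtimes H$: the associated subgroups $A_i$ in the HNN-extension $\HNN(G,A_i,\phi_i,X_i)$ are the subgroups of $H$, and Definition~\ref{HNN-ext} requires $A_i\leq G$ together with $(\phi_i)_{|A_i\times\{*\}}=\iid$, so $H$ must literally sit inside $G$. For an arbitrary virtually free pro-$p$ group $G$ the extension $1\to F\to G\to H\to 1$ need not split, and then Lemma~\ref{mono} simply does not apply to $G$ as stated. You acknowledge that the extension need not split but then invoke Lemma~\ref{mono} regardless; that is the missing step.

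The paper's proof fixes exactly this point with one extra embedding: it first passes from $G$ to $G_0=G\amalg G/F$, which \emph{is} a genuine semidirect product $F_0\rtimes G/F$ (the second free factor $G/F$ supplies the splitting, and $F_0=\ker(G_0\to G/F)$ is free pro-$p$ by the pro-$p$ Kurosh subgroup theorem). Only then does it apply Theorem~\ref{immersion} (which rests on Lemma~\ref{mono}) to $G_0$, obtaining $\tilde G=\tilde F\rtimes G/F$ with $\tilde F^{ab}$ a permutation module, and finishes via Theorem~\ref{perm HNN} and Remark~\ref{action on tree} exactly as you do. So your argument becomes correct once you insert the preliminary embedding $G\hookrightarrow G\amalg G/F$; everything from Corollary~\ref{permext} onward is fine.
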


\begin{proof}
Let $F$ be an open normal subgroup of $G$.  We can embed $G$ into $G_0=G\amalg G/F=F_0\rtimes G/F$, where $F_0$ is the kernel of the natural epimorphism $G_0\longrightarrow G/F$ induced by the natural epimorphism $G\longrightarrow G/F$ and the identity map on the second factor. Note that by the pro-$p$ version of the Kurosh subgroup theorem \cite[Theorem 9.1.9]{RZ}$F_0$ is free pro-$p$. By Theorem \ref{immersion} $G_0$  can be embedded in a semidirect product
$\tilde G = \tilde F\rtimes G/F$ such that $G/F$ permutes the elements of some (pointed) basis of $\tilde F$ and so $\tilde F^{ab}$ is a $\Z_p[G/F]$-permutation module.    Hence by Theorem \ref{perm HNN} $\tilde G$ is a special HNN-extension with finite base group $G/F$.  Therefore according to Remark \ref{action on tree} $\tilde G$ and hence $G$ acts on a pro-$p$ tree with finite vertex stabilizers.
\end{proof}

We finish this section with  the result on virtually free pro-$p$ groups that resembles the statement of the  Weiss theorem for modules.

\begin{thm}\label{rigidity} Let $G=F\rtimes H$ be a semidirect product of finite $p$-group and free pro-$p$ group $H$. Let $N$ be a normal subgroup of $H$ such that

\medskip
(i) all torsion elements of $FN$ are conjugate into $N$;

(ii) all torsion elements of $G/N^G$ are conjugate into $HN^G/N^G$.

\medskip
Then $G$ is a special HNN-extension with base group $H$. In particular, all torsion elements of $G$ are conjugate into $H$. \end{thm}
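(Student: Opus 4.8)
The plan is to reduce Theorem~\ref{rigidity} to Theorem~\ref{permutation module} and Corollary~\ref{permext} by showing that the module $M = F^{ab}$, viewed as a $\Z_p[H]$-lattice, satisfies the hypotheses of the generalized Weiss theorem (Theorem~\ref{generalized weiss}), with $N$ playing the role of the normal subgroup. The point is that conditions (i) and (ii) on torsion are precisely the group-theoretic incarnations of the module-theoretic conditions (i) and (ii) appearing there. I proceed by induction on $|N|$, reducing to the case $|N| = p$ just as the proof of Theorem~\ref{thm:weiss} does; so assume $N$ has order $p$ and write $N = C$, a (not necessarily central) subgroup of $H$ of order $p$.

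First I would analyze $FN = FC$. Hypothesis (i) says all torsion elements of $FC$ are conjugate into $C$, i.e.\ $FC = F \rtimes C$ has a single conjugacy class of subgroups of order $p$; so by Lemma~\ref{free-by-Cp} we get $M = F^{ab} = M_1 \oplus M_p$ as $\Z_p[C]$-modules, with $M_1 = C_F(C)[F,F]/[F,F]$ trivial and $M_p$ free over $\Z_p[C]$. Since $C_F(C)$ is normalized by $H$ (as $C \trianglelefteq H$... wait — here $C = N$ is normal in $H$ by hypothesis, so $C_F(C)$ is indeed $H$-invariant), the summand $M_1$ is an $H$-submodule, and this is exactly hypothesis (i) of Theorem~\ref{generalized weiss}, with the required $G$-invariance of the trivial part. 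Next I would pass to the quotient $G/C^G$. By Lemma~\ref{free-by-Cp}(i), $\bar F := FC^G/C^G$ is free pro-$p$, and $G/C^G = \bar F \rtimes H/C$; moreover $\bar F^{ab} \cong F^{ab}_C = M_C$. Hypothesis (ii) says every torsion element of $G/C^G$ is conjugate into $HC^G/C^G = H/C$, so by Theorem~\ref{permutation module} applied to $G/C^G$, the module $M_C$ is $\Z_p[H/C]$-permutation. Using the isomorphism $M^C \cong M_C$ modulo the trivial part (via \cite[Lemma~2.4]{McZ}, exactly as in the proof of Theorem~\ref{permutation module}), we obtain that $M^C$ is $\Z_p[H/C]$-permutation — this is hypothesis (ii) of Theorem~\ref{generalized weiss}.

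With both hypotheses in hand, Theorem~\ref{generalized weiss} yields that $M = F^{ab}$ is a $\Z_p[H]$-permutation module. Then Theorem~\ref{perm HNN} gives that $G = \mathrm{HNN}(H, H_e, X_e)$ is a special HNN-extension with base group $H$, and the description of vertex stabilizers in Remark~\ref{action on tree} (or simply the structure of a special HNN-extension) shows that all torsion elements of $G$ are conjugate into $H$, proving the final assertion. For the induction step reducing general $N$ to the case $|N| = p$, I would choose a subgroup $C \leq N$ of order $p$ that is normal in $H$ (possible since $N$ is a normal $p$-subgroup of the finite $p$-group $H$, so $N \cap Z(H) \neq 1$ and we may take $C \leq N \cap Z(H)$), verify that hypotheses (i) and (ii) descend to $G/C^G$ with $N/C$ in place of $N$ — (i) for $FN/C^G$ follows from Proposition~\ref{lifting torsion} applied inside $FN$, and (ii) is inherited directly — and then apply the inductive hypothesis to $G/C^G$ together with the case $|N|=p$ applied to $G$ itself.

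The main obstacle I anticipate is the bookkeeping around $G$-invariance of the trivial summand $M_1$ in the case $|N| = p$: one must use that $N = C$ is normal in $H$ (not merely central, though one can arrange centrality in the induction by choosing $C \leq N \cap Z(H)$) to guarantee $C_F(C)$ is $H$-stable, and one must check that the maximal $\Z_p[C]$-trivial summand coming out of Lemma~\ref{free-by-Cp} really is this canonical $H$-invariant one rather than merely isomorphic to it. The second delicate point is verifying that hypothesis (ii) genuinely passes down the tower $G \to G/C^G \to \dots$ compatibly with the iterated normal closures, which is where Proposition~\ref{lifting torsion} does the real work of identifying $\mathrm{tor}(G/N^G)$ with the image of $\mathrm{tor}(G)$.
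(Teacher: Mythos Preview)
Your proposal is correct and follows essentially the same route as the paper: induction on $|N|$, with the base case $|N|=p$ handled by combining Lemma~\ref{free-by-Cp} (giving the $\Z_p[N]$-decomposition $M_1\oplus M_p$ with $M_1$ $H$-invariant), Theorem~\ref{permutation module} applied to $G/N^G$ (giving $M_N\cong M^N$ permutation over $\Z_p[H/N]$), Theorem~\ref{generalized weiss}, and then Theorem~\ref{perm HNN}; and the inductive step handled by choosing $C\leq N\cap Z(H)$ of order $p$, descending to $G/C^G$ via Lemma~\ref{quotient}/Proposition~\ref{lifting torsion}, applying the induction hypothesis with $N/C$, and then the base case with $C$. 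The only point left implicit in both your sketch and the paper is that hypothesis~(i) for $N$ together with centrality of $C$ in $N$ yields hypothesis~(i) for $C$ (needed to run the base case on $G$), which is an easy check.
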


\begin{proof} We shall use induction on $|N|$.
Suppose $|N|=p$. Then by Lemma \ref{free-by-Cp} $\Z_p[N]$-module $M=F^{ab}$ has a decomposition $M_1\oplus M_p$ such that $M_1$ is trivial and $M_p$ is a free $\Z_p[N]$-module and moreover $M_1$ is $\Z_p[H]$-invariant. Hence $M_N\cong M^N$. Then observing that $N^G=N^{FN}$ we have the following commutative diagram:

$$\xymatrix {F\ar[r]\ar[d]&\bar F=FN/N^{G}\ar[d]\\
              M\ar[r] & M_N},$$
where  by Lemma \ref{quotient} $\bar F$ is free pro-$p$.

Clearly the $\Z_p$-module $M_N=M_1\oplus (M_p)_N$.

By Theorem \ref{permutation module} $M^N\cong M_N=\bar F^{ab}$ is a permutation $\Z_p[H]$-module and so by Theorem \ref{generalized weiss}  so is $M$.   Hence by  Theorem \ref{perm HNN} $G$ is a special HNN-extension of $H$. In particular, by \cite[Cor. 7.1.3]{R 2017} all torsion elements of $G$  are conjugate into $H$ and in fact $F$-conjugate. 

Suppose now $|N|>p$ and
 let $C$ be a central normal subgroup of $H$ contained in $N$. By   Lemma \ref{quotient}  $G/C^G=\bar F\rtimes H/C$, where $\bar F=FC/C^G$ is free pro-$p$ and    every torsion element of $\bar F\rtimes  N/C$ is conjugate into $N/C$ (since  $C^G=C^{NF}$). Hence by the induction hypothesis  all torsion elements of $G/C^G$ are conjugate into $H/C$. Then by the base of induction all torsion elements of $G$ are conjugate into $H$ and by Corollary \ref{permext} $G$ is a special HNN-extension.

\end{proof}

\section{Finite centralizers of torsion}

The main theorem of \cite{McZ} states that a second countable virtually free pro-$p$ group having finite centralizers of the non-trivial torsion elements is a free pro-$p$ product of finite groups and a free pro-$p$ group. It is not true for virtually free pro-$p$ groups of uncountable rank. The objective of this section is to show that a free pro-p groups of arbitrary rank  embeds in a free pro-$p$ product of a finite $p$-group and a free pro-$p$ group. We shall also establish a criterion  for its decomposition as a free pro-$p$ product of finite $p$-groups and a free pro-$p$ group.

We shall begin with the following

\begin{lem} \label{centralizers out}  Let $\tilde G=\HNN(K,A_i,X_i,i\in I)$ be a special pro-$p$ HNN-extension  of a finite $p$-group $K$ and $\tilde F$ be the normal closure of $(\bigcup_{i\in
I}X_i,*)$ in $\tilde G$. Let $I_0=\{i\in I\mid A_i\neq 1\}$ and $F$ be a $K$-invariant free factor of $\tilde F$  such that  $C_{ F}(A_i)=1$   for every $i\in I_0$. Then putting $\tilde F_K=\langle X_i\mid i\in I_0\rangle^{\tilde F}$ we have $F\cap \tilde F_K =1$ and $\tilde G/\tilde F_K=F_0\amalg K$ for some free pro-$p$ group $F_0$.\end{lem}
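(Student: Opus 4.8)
The plan is to combine the free-product description of $\tilde F$ in Proposition~\ref{perm-ext0} with the pro-$p$ Kurosh subgroup theorem.

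\smallskip
\noindent\emph{Set-up.} By Proposition~\ref{perm-ext0}, $\tilde F=\coprod_{i\in I}\coprod_{r\in R_i}\coprod_{s\in S_i}F(X_i,*)^{s^{-1}r^{-1}}$, and conjugation by $K$ permutes these free factors, preserving the index $i$ of each. Let $\tilde F_{I_0}$ be the $K$-invariant free factor of $\tilde F$ spanned by the factors with $i\in I_0$ and $\tilde F_{I_1}$ the one spanned by those with $i\notin I_0$, so $\tilde F=\tilde F_{I_0}\amalg\tilde F_{I_1}$; the subgroup $\tilde F_K=\langle X_i:i\in I_0\rangle^{\tilde F}$ is the normal closure $(\tilde F_{I_0})^{\tilde F}$. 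Being the normal closure of a $K$-invariant subgroup of the normal subgroup $\tilde F$, it is normal in $\tilde G$, and the retraction $q\colon\tilde F\twoheadrightarrow\tilde F/\tilde F_K\cong\tilde F_{I_1}$ killing $\tilde F_{I_0}$ is $K$-equivariant, since $K$ permutes the free factors of $\tilde F$ compatibly with the partition $I=I_0\sqcup(I\setminus I_0)$.

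\smallskip
\noindent\emph{The quotient $\tilde G/\tilde F_K$.} Since $A_i=1$ for $i\notin I_0$, the defining relations of the HNN-extension indexed by those $i$ are vacuous, so the corresponding free factors $F(X_i,*)$ split off: $\tilde G=\tilde G_0\amalg F_0$, where $\tilde G_0=\HNN(K,A_i,X_i\mid i\in I_0)$ and $F_0:=F(\bigcup_{i\notin I_0}X_i,*)$ is a free pro-$p$ group. Inside $\tilde G_0$ the normal closure of the stable letters is exactly $\tilde F_{I_0}$, with $\tilde G_0/\tilde F_{I_0}\cong K$; hence $\tilde F_K$ is also the normal closure of $\tilde F_{I_0}$ in $\tilde G=\tilde G_0\amalg F_0$, and consequently
$$\tilde G/\tilde F_K\ \cong\ (\tilde G_0/\tilde F_{I_0})\amalg F_0\ \cong\ K\amalg F_0,$$
which is the second assertion.

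\smallskip
\noindent\emph{Triviality of $F\cap\tilde F_K$.} Since $\ker q=\tilde F_K$, it is enough to show $q$ is injective on $F$. As $F$ is a free factor of $\tilde F$, the pro-$p$ Kurosh subgroup theorem (\cite[Theorem 9.1.9]{RZ}) applied to $F\le\tilde F=\coprod_\lambda G_\lambda$ (the $G_\lambda$ being the factors $F(X_i,*)^{s^{-1}r^{-1}}$) writes $F$ as a free pro-$p$ product of a free pro-$p$ group with the subgroups $F\cap G_\lambda^{\,d}$; because $q$ kills every $\tilde F$-conjugate of $\tilde F_{I_0}$, the kernel of $q|_F$ is the normal closure in $F$ of those $F\cap G_\lambda^{\,d}$ with $G_\lambda$ a factor of type $i\in I_0$. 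For such $G_\lambda=F(X_i,*)^{s^{-1}r^{-1}}$, the nontrivial finite group $A_i^{s^{-1}r^{-1}}\le K$ centralizes $G_\lambda$, so $F\cap G_\lambda^{\,d}\le C_F\big(A_i^{s^{-1}r^{-1}d}\big)$; hence it suffices to prove that $C_F(A_i^{x})=1$ for every $i\in I_0$ and every $x\in\tilde G$. For $x\in K$ this follows from $C_F(A_i)=1$ and the $K$-invariance of $F$, which give $C_F(A_i^{k})=C_F(A_i)^{k}=1$. For general $x=fk$ ($f\in\tilde F$) one has $C_{\tilde F}(A_i^{x})=C_{\tilde F}(A_i)^{x}$, so — using $K$-invariance again — one reduces to showing $F\cap C_{\tilde F}(A_i)^{f}=1$ for $f\in\tilde F$; here $C_{\tilde F}(A_i)$ is a free factor of $\tilde F$ (for $A_i$ maximal finite, by the corrected Proposition~\ref{perm-ext0}; one reduces to this case by enlarging the $A_i$), it meets $F$ trivially by hypothesis, and $F$ is not $\tilde F$-conjugate into it because $F$ is $K$-invariant with $C_F(A_j)=1$ for every $j\in I_0$; the disjointness properties of free factors of a free pro-$p$ group then give $F\cap C_{\tilde F}(A_i)^{f}=1$. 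Thus $\ker(q|_F)=1$, i.e. $F\cap\tilde F_K=1$.

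\smallskip
\noindent\emph{Main obstacle.} The hard part is the last step, namely upgrading the hypothesis $C_F(A_i)=1$ to $C_F(A_i^{x})=1$ for all $x\in\tilde G$: conjugating $A_i$ by an element of $K$ is harmless, but conjugating by an element of $\tilde F$ could a priori carry $F$ onto a conjugate of some $C_{\tilde F}(A_j)$, and this must be excluded using that $F$ is a $K$-invariant free factor with $C_F(A_j)=1$ for \emph{all} $j\in I_0$ simultaneously, together with the fact that the $C_{\tilde F}(A_j)$ are free factors of $\tilde F$ coming from distinct index-blocks of the decomposition of Proposition~\ref{perm-ext0}. Everything else is bookkeeping with the Kurosh decomposition and the HNN presentation; the action of $\tilde G$ on the standard pro-$p$ tree $S(\tilde G)$ (whose edges of nontrivial stabilizer are precisely those lying over the indices $i\in I_0$) is available as an equivalent language should it streamline the argument.
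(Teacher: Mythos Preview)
Your derivation of the quotient $\tilde G/\tilde F_K\cong K\amalg F_0$ is fine and matches the paper: this is exactly what ``follows directly from the presentation of a special HNN-extension.''

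The argument for $F\cap\tilde F_K=1$, however, has a genuine gap --- in fact two. First, the pro-$p$ Kurosh theorem you cite (\cite[Theorem~9.1.9]{RZ}) is for \emph{open} subgroups of a free pro-$p$ product; $F$ is merely a closed free factor of $\tilde F$, and the paper itself stresses (see the discussion around Theorem~\ref{decomposition}) that Kurosh-type decompositions for closed subgroups require second countability or a continuous section, neither of which is available here. Second, and more seriously, the ``disjointness property of free factors'' you invoke at the end is false even for abstract free groups. In $F(a,b,c,d)$ take $A=\langle a,b\rangle$ and $B=\langle c,\,dad^{-1}\rangle$ (the latter is a free factor via the automorphism $a\mapsto dad^{-1}$): one checks $A\cap B=1$, neither is conjugate into the other (look at abelianizations), yet $A\cap B^{d^{-1}}\supseteq\langle a\rangle\neq 1$. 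So from $F\cap C_{\tilde F}(A_i)=1$ together with ``$F$ not $\tilde F$-conjugate into $C_{\tilde F}(A_i)$'' you cannot conclude $F\cap C_{\tilde F}(A_i)^f=1$ for all $f\in\tilde F$. Since this is precisely the step you flag as the main obstacle, the proof is incomplete.

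The paper's route is entirely different and avoids centralizer-chasing in $\tilde F$. It inducts on $|I_0|+|K|$. The base case $|K|=p$ is handled at the level of abelianizations: one has $\tilde F^{ab}=M_1\oplus M_p$ as in Lemma~\ref{free-by-Cp}, and the point is that since $F$ is a $K$-invariant free factor, $F^{ab}$ is a \emph{pure} $\Z_p[K]$-sublattice of $\tilde F^{ab}$; the structure theorem \cite[Theorem~B]{porto} for $\Z_p[C_p]$-lattices then forces $F^{ab}\cap M_1=0$, whence the quotient map $\tilde F\to\tilde F/\tilde F_K$ is injective on $F$. For $|K|>p$ one first kills the stable letters attached to associated subgroups contained in a fixed index-$p$ subgroup $H\leq K$ (using the induction hypothesis for $\tilde FH$), observes that the quotient is again a special HNN-extension with smaller $|I_0|$, and applies induction once more; a residual case $|I_0|=1$, $A_{i_0}=K$ is dispatched by dropping to $\tilde F C$ for a central $C\leq K$ of order $p$. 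The module-theoretic step in the base case is what replaces your unproved disjointness claim.
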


\begin{proof} The second statement follows directly from the presentation of a special HNN-extension.
     We shall induct on $|I_0|+|K|$ to prove the first one.

      If $K$ is of order $p$ (and so $|I_0|\leq 1$) then by Lemma \ref{free-by-Cp} $\tilde G=(K\times F_K)\amalg F_0$, where $F_K=\langle X_i\mid i\in I_0\rangle$, $F_0$ are free and  $\tilde F^{ab}=M_1\oplus M_p$, where $M_1$ is the trivial $\Z_p[K]$-module coinciding with the image of $F_K$ in $\tilde F^{ab}$ and $M_p$ is a free $\Z_p[K]$-module. Thus we have the following commutative diagram
     $$\xymatrix{\tilde F\ar[r]\ar[d] & F_0\ar[d]\\
               \tilde F^{ab}\ar[r]& M_p}.$$

               Since $F$ is a $K$-invariant free factor of $\tilde F$, its abelianization
               $F^{ab}$ is a pure $\Z_p[K]$-submodule of $\tilde F^{ab}$ (i.e. a $\Z_p$-direct summand). By \cite[Theorem B]{porto} $F^{ab}=M_{p-1}\oplus L$, where $L$ is a free $\Z_p[K]$-module and $M_{p-1}$ has no non-zero elements fixed by $K$. Since $L$ is free it is a direct summand of $\tilde F^{ab}$  (see \cite[Proposition 3.6.4]{benson}); the proof over $\Z_p$ works mutatis mutandis) and so  by Proposition \ref{perm modules}(ii) can be assumed to be contained in $M_p$. Hence $M_1\cap F^{ab}=0$ and so the bottom map of the diagram is injective on $F^{ab}$. Therefore the upper map of the diagram is injective on $F$ and the lemma is proved in this case.

   Suppose $|K|>p$  and   let $H$ be  a subgroup of index $p$ in $K$.   Put $\tilde F_H= \langle X_i\mid 1\neq A_i\leq H, i\in I_0\rangle^{\tilde F}$. By the induction hypothesis applied to $\tilde F H$ we know that
   $F\cap \tilde F_H=1$ and so   the natural epimorphism $\tilde G\longrightarrow \tilde G/\tilde F_H$ restricted to $G$ is injective. Let $I_H=\{i\in I_0\mid A_i\leq H\}$. It follows from the presentation of a special HNN-extension  that $\tilde G/\tilde F_H={\rm HNN}(K, A_i, X_i, i\in I\setminus I_H\}$ and so satisfies the premises of the lemma.    So if $I_H\neq \emptyset$ for  such $H$ then we deduce the result from the induction hypothesis.

   Thus we left with the case when $|I_0|=1$ and $A_{i_0}=K$ for $i_0\in I_0$. In this case $\tilde G=(K\times F(X_{i_0},*))\amalg F_0$ for some free pro-$p$ group $F_0$.
     Let $C$ be a normal subgroup of order $p$ of $K$. Then $C_{\tilde F} (C)=C_{\tilde F}(K)=F(X_{i_0},*)$ (see \cite[Theorem 9.1.12]{RZ}). So $\tilde F C$ satisfies the premises of the lemma and the result follows from the induction hypothesis applied to $\tilde F C$ in this case.

  \end{proof}

\begin{thm}\label{embedding semidirect} Let $G=F\rtimes H$ be a semidirect product of a free pro-$p$ group $F$ and a finite $p$-group $H$. Suppose $G$ have finite centralizers of the non-trivial torsion elements. Then $G$ embeds into a free pro-$p$ product $H\amalg F_0$ of $H$ and a free pro-$p$ group $F_0$.\end{thm}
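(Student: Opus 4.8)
The plan is to lift $G$ into the special pro-$p$ HNN-extension produced by the earlier embedding results and then collapse the part of the kernel carrying the associated (finite) subgroups, by appealing to Lemma~\ref{centralizers out}. First I would apply Lemma~\ref{mono} to embed $G=F\rtimes H$ into $\tilde G=\tilde F\rtimes H$ with $F$ a free factor of the free pro-$p$ group $\tilde F$; since $H$ normalises $F$ inside $G\le\tilde G$, $F$ is in fact an $H$-invariant free factor of $\tilde F$. By Theorem~\ref{immersion}, $H$ permutes a (pointed) basis of $\tilde F$, so $\tilde F^{ab}$ is a $\Z_p[H]$-permutation module; hence Theorem~\ref{perm HNN} applies to $\tilde G=\tilde F\rtimes H$ and presents it as a special pro-$p$ HNN-extension $\tilde G=\HNN(H,H_e,X_e)$, $e\in E$, with base group $H$, in which $\tilde F$ is the normal closure of $F(\bigcup_{e\in E}X_e,*)$ and each $H_e$ is a subgroup of $H$, hence of $G$.

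Next I would verify the hypotheses of Lemma~\ref{centralizers out} for $\tilde G$, with $K=H$, $A_e=H_e$, and the above $F$. The only non-formal point is that $C_F(H_e)=1$ whenever $H_e\neq1$. Indeed, pick $1\neq a\in H_e\le H\le G$; then $a$ is a non-trivial torsion element of $G$, so by hypothesis $C_G(a)$ is finite, whence $C_F(a)\le C_G(a)$ is a finite subgroup of the free pro-$p$ group $F$ and therefore trivial; a fortiori $C_F(H_e)\le C_F(a)=1$.

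Lemma~\ref{centralizers out} now yields, with $\tilde F_H:=\langle X_e\mid H_e\neq1\rangle^{\tilde F}$, that $F\cap\tilde F_H=1$ and $\tilde G/\tilde F_H=F_0\amalg H$ for some free pro-$p$ group $F_0$; it then remains to see that the quotient map $q\colon\tilde G\to\tilde G/\tilde F_H$ is injective on $G$. Since $\tilde F_H\le\tilde F$ and $G\cap\tilde F=F$ (the projection $\tilde G\to H$ restricts on $G$ to the projection with kernel $F$), we have $\ker q\cap G=\tilde F_H\cap G\le\tilde F_H\cap F=1$, so $q$ embeds $G$ into $\tilde G/\tilde F_H=H\amalg F_0$, as asserted. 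I do not expect any genuinely new idea to be needed; the substantive content is already packaged in Lemma~\ref{centralizers out}, and the care required lies in the bookkeeping — ensuring the free pro-$p$ group $\tilde F$ of Lemma~\ref{mono} is exactly the normal closure of the stable letters in the HNN-presentation of $\tilde G$ (so that $F$ is a free factor of the right group), and that the finite-centralizer hypothesis descends along $G\le\tilde G$ in the precise form $C_F(H_e)=1$.
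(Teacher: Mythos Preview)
Your proof is correct and follows the same approach as the paper: embed $G$ into $\tilde G$ via Lemma~\ref{mono}, present $\tilde G$ as a special HNN-extension over $H$, verify $C_F(H_e)=1$ for $H_e\neq 1$ from the finite-centralizer hypothesis, and apply Lemma~\ref{centralizers out}. The only cosmetic difference is that the paper reaches the special HNN presentation in one step via Corollary~\ref{permext} (since every torsion element of $\tilde G$ is conjugate into $H$ by construction), whereas you route through Theorem~\ref{immersion} and then Theorem~\ref{perm HNN}; these are the same argument, since Theorem~\ref{immersion} is proved using precisely the $\tilde G$ of Lemma~\ref{mono} together with Corollary~\ref{permext}.
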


\begin{proof}    Consider $G$ as a subgroup of an HNN-extension $$\tilde G=\HNN(G,A_i,\phi_i, X_i), i\in I$$ (Lemma \ref{mono}).  By Corollary \ref{permext} $\tilde G$ is a special HNN-extension $\tilde G={\rm HNN}(H, H_e, X_e), e\in E$.

\smallskip
Since $G$ has finite centralizers of non-trivial torsion elements $C_F(A_i)=1$ for every $i$, so applying Lemma \ref{centralizers out}  we obtain the natural epimorphism $\tilde G\longrightarrow\tilde G/\langle X_i)\mid A_i\neq 1\rangle^{\tilde G}=F_0\amalg H$   whose restriction to $G$ is an injection. The result follows.
\end{proof}

\begin{cor} \label{free-by-finite embedding}
Let $G$ be a  pro-$p$ group possessing an open normal free pro-$p$ group $F$ and having finite centralizers of the non-trivial torsion elements. Then $G$ embeds into a free pro-$p$ product $G/F \amalg F_1$ of the finite quotient group $G/F$ and a free pro-$p$ group $F_1$.
\end{cor}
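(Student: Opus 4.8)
The plan is to reduce the statement to the split case treated in Theorem \ref{embedding semidirect}, by the same doubling device used in the proof of Theorem \ref{acting on trees}. Write $H=G/F$, a finite $p$-group, and embed $G$ into $G_0=G\amalg H$, where the second copy of $H$ is glued to $G$ via the quotient map $G\to H$ and the identity on the free factor $H$. As observed in the proof of Theorem \ref{acting on trees}, the kernel $F_0$ of the induced epimorphism $G_0\to H$ is free pro-$p$ by the pro-$p$ Kurosh subgroup theorem \cite[Theorem 9.1.9]{RZ}, so $G_0=F_0\rtimes H$ is a semidirect product of a free pro-$p$ group and a finite $p$-group. Once we know that $G_0$ again has finite centralizers of its non-trivial torsion elements, Theorem \ref{embedding semidirect} provides an embedding $G_0\hookrightarrow H\amalg F_1=(G/F)\amalg F_1$ for some free pro-$p$ group $F_1$, and composing with $G\le G_0$ yields the desired embedding.

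So the substance is to verify that $G_0$ has finite centralizers of non-trivial torsion. For this I would work with the standard pro-$p$ tree $T$ attached to the free pro-$p$ product $G\amalg H$ (a special case of the tree in Remark \ref{action on tree}; see also \cite{R 2017}): its vertex stabilizers are the $G_0$-conjugates of $G$ and of $H$, and its edge stabilizers are trivial. Let $1\ne t\in G_0$ be a torsion element. The finite subgroup $\langle t\rangle$ is conjugate into one of the two free factors (\cite[Cor. 7.1.3]{R 2017}), so $t$ fixes a vertex of $T$; since the fixed-point set of $t$ is a subtree and every edge stabilizer is trivial, $t$ in fact fixes a \emph{unique} vertex $v$. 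Hence any element of $C_{G_0}(t)$ stabilizes $\{v\}$, so $C_{G_0}(t)\le\stab(v)$, which is a conjugate of $G$ or of $H$.

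If $\stab(v)$ is a conjugate of $H$, then $C_{G_0}(t)$ is finite because $H$ is. If $\stab(v)=G^g$, then $C_{G_0}(t)=C_{G^g}(t)=C_G(g^{-1}tg)^g$, and $g^{-1}tg$ is a non-trivial torsion element of $G$, so $C_G(g^{-1}tg)$ is finite by hypothesis; thus $C_{G_0}(t)$ is finite in this case as well. This establishes the hypothesis of Theorem \ref{embedding semidirect} for $G_0$, and the corollary follows. The only delicate point is the uniqueness of the fixed vertex of a non-trivial torsion element of $G_0$: this is exactly where the triviality of edge stabilizers in a free (rather than amalgamated) pro-$p$ product is used, and it is what pins $C_{G_0}(t)$ inside a single vertex group instead of leaving it merely elliptic.
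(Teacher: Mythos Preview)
Your argument is correct and follows the same route as the paper: embed $G$ into $G_0=G\amalg G/F=F_0\rtimes G/F$ with $F_0$ free pro-$p$, then apply Theorem~\ref{embedding semidirect}. The paper's proof is a single line and leaves the verification that $G_0$ inherits finite centralizers of torsion implicit; your tree argument (unique fixed vertex via trivial edge stabilizers, hence the centralizer lies in a single vertex group) is the natural way to justify that step and is a welcome addition rather than a deviation.
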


\begin{proof} Embed $G$ into $G_0=G\amalg G/F=F_0\rtimes G/F$ with $F_0$   free pro-$p$  containing $F$ and apply Theorem \ref{embedding semidirect}. \end{proof}

We are ready to prove Theorem \ref{finite centralizers embedding}.

\begin{thm}\label{virtually free embedding}  Let $G$ be a virtually free pro-$p$ group having finite centralizers of the non-trivial torsion elements. Then $G$ embeds into a free pro-$p$ product $G_0=F_0\amalg H$ of a finite $p$-group $H$ and a free pro-$p$ group $F_0$.

\end{thm}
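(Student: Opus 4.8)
The plan is to reduce everything to Corollary \ref{free-by-finite embedding}, which already settles the case of a virtually free pro-$p$ group admitting an open \emph{normal} free pro-$p$ subgroup. Thus the only work is to manufacture such a subgroup inside an arbitrary virtually free pro-$p$ group $G$, and this is done by passing to a core.

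First I would use the definition of virtually free pro-$p$: $G$ contains an open free pro-$p$ subgroup $F$. Since $G$ is profinite and $F$ is open, $F$ has only finitely many conjugates in $G$, so the core $N:=\bigcap_{g\in G}F^{g}$ is an open normal subgroup of $G$. Being a closed (indeed open) subgroup of the free pro-$p$ group $F$, the group $N$ is itself free pro-$p$ by the pro-$p$ analogue of the Nielsen--Schreier theorem (see \cite[Chapter 3]{RZ}). Moreover $G/N$ is a finite $p$-group, as $N$ is open in the pro-$p$ group $G$.

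Next, by hypothesis $G$ has finite centralizers of all non-trivial torsion elements, so $G$ together with the open normal free pro-$p$ subgroup $N$ satisfies exactly the premises of Corollary \ref{free-by-finite embedding}. Applying that corollary yields an embedding of $G$ into the free pro-$p$ product $G/N\amalg F_{1}$ for some free pro-$p$ group $F_{1}$. Setting $H:=G/N$ (a finite $p$-group) and $F_{0}:=F_{1}$ gives the desired embedding $G\hookrightarrow F_{0}\amalg H$.

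I expect no genuine obstacle here: the substantive content lies entirely in Corollary \ref{free-by-finite embedding}, and upstream of it in Theorem \ref{embedding semidirect} and Lemma \ref{centralizers out}; the present statement is merely the routine passage from ``open free pro-$p$ subgroup'' to ``open normal free pro-$p$ subgroup''. The only step deserving explicit mention is that the core of $F$ is again free pro-$p$, which is immediate from the Nielsen--Schreier property for pro-$p$ groups, and the observation that neither finiteness of torsion centralizers nor the virtual freeness is disturbed by shrinking $F$ to $N$.
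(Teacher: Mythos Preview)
Your proof is correct and follows exactly the paper's approach: take the core of an open free pro-$p$ subgroup to obtain an open normal free pro-$p$ subgroup, then apply Corollary~\ref{free-by-finite embedding}. The paper's proof is just a terser version of what you wrote.
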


\begin{proof} Let $F$ be a core of an open free pro-$p$ subgroup of $G$. Then $F$ is open normal and we can apply Corollary \ref{free-by-finite embedding}.\end{proof}

Let $G$ be a pro-$p$ group and $\{G_x\mid x\in X\}$ be a family of subgroups indexed by a profinite space $X$. Following \cite[Section 5.2]{R 2017} we say that $\{G_x\mid x\in X\}$ is continuous if for any open subgroup $U$ of $G$ the subset $\{ x\in X\mid G_x\subseteq U\}$ is open.

\begin{lem}\label{continuous family} Let $G$ be a virtually free pro-$p$ group having finite centralizers of the non-trivial torsion elements. Then the maximal finite subgroups of $G$  can be indexed by some  profinite $G$-space  $X$ such that  the family
$\mathcal F=\{G_x\mid x\in X\}$ of them is continuous. \end{lem}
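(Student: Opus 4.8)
The plan is to realize the maximal finite subgroups as the stabilizers of a suitable $G$-space coming from the structure theory already developed. By Theorem~\ref{virtually free embedding} we may regard $G$ as a subgroup of a free pro-$p$ product $G_0 = F_0 \amalg H$ with $H$ a finite $p$-group and $F_0$ free pro-$p$. Using the pro-$p$ version of the Kurosh Subgroup Theorem applied to $G\le G_0$ (\cite[Theorem 9.1.9]{RZ}), every finite subgroup of $G$ is conjugate in $G_0$ into $H$; more precisely, $G$ acts on the standard pro-$p$ tree $T$ associated to the decomposition $G_0=F_0\amalg H$, whose vertex stabilizers in $G_0$ are the conjugates of $H$ and of the trivial group. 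Let $Y$ be the set of those vertices of $T$ whose $G$-stabilizer is non-trivial. The first step is to show that $Y$ is a closed (hence profinite) $G$-invariant subset of $V(T)$: a vertex $gH$ has non-trivial $G$-stabilizer precisely when $G\cap H^g \ne 1$, and since $G$ has finite centralizers of torsion, the subgroups $G\cap H^g$ are controlled; closedness follows because the condition ``$G\cap H^g = 1$'' is, for a virtually free pro-$p$ group with finite torsion centralizers, an open condition on $gH$ (this uses that a non-trivial torsion element of $G$ lies in only finitely many conjugates of $H$, which in turn follows from finiteness of the centralizers).

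Next I would pass from arbitrary non-trivially stabilized vertices to the maximal finite subgroups. For $y\in Y$ the stabilizer $G_y = G\cap H^{g_y}$ is a finite $p$-group, and because $G$ has finite centralizers of non-trivial torsion elements, each finite subgroup of $G$ is contained in a unique maximal finite subgroup (this is the standard consequence used throughout: if two maximal finite subgroups met non-trivially they would be forced to coincide, the overlap having a $G$-centralizer that is both finite and, by the tree action, malnormal-like). Define $X\subseteq Y$ to be the subset of vertices $y$ such that $G_y$ is maximal among the stabilizers, equivalently such that $G_y$ is a maximal finite subgroup of $G$; the map $y\mapsto G_y$ then surjects onto the set $\mathcal F$ of maximal finite subgroups. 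I would show $X$ is again closed in $V(T)$ by expressing the maximality condition via the tree: $G_y$ fails to be maximal iff there is an edge at $y$ fixed by $G_y$ leading to a vertex with strictly larger stabilizer, and the set of $y$ admitting such an edge is open (finiteness of the local structure of $T$ at each vertex, together with the finite centralizer hypothesis, makes this a clopen condition). Restricting the $G$-action to $X$ makes $X$ a profinite $G$-space, and $\mathcal F = \{G_x\mid x\in X\}$.

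Finally I would verify continuity of the family $\mathcal F = \{G_x\mid x\in X\}$ in the sense of \cite[Section 5.2]{R 2017}: for an open subgroup $U$ of $G$ the set $\{x\in X\mid G_x\subseteq U\}$ is open. Since each $G_x$ is finite, $G_x\subseteq U$ holds iff each of the finitely many elements of $G_x$ lies in $U$; the point is that the map $x\mapsto G_x$ is ``locally constant enough''. Concretely, $G$ acts on $T$, and for each vertex $y$ the stabilizer $G_y$ is the pointwise stabilizer of $y$; the function sending $y$ to its stabilizer is continuous for the natural topology on the (profinite) space of finite subgroups of the fixed finite group $H$ up to conjugacy, because $T$ is a pro-$p$ tree built as an inverse limit of finite trees and the stabilizer of a vertex is determined at a finite level. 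Hence $\{x\in X\mid G_x\subseteq U\}$, being the preimage of an open set under this continuous stabilizer map, is open in $X$.

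The main obstacle I expect is the closedness/continuity bookkeeping: showing that $Y$ and $X$ are closed and that $x\mapsto G_x$ is continuous all rest on the single nontrivial input that, under the finite-centralizer hypothesis, a non-trivial torsion element of $G$ is contained in only finitely many maximal finite subgroups and that these subgroups intersect pairwise trivially. Once that malnormality-type statement is in hand — which should follow from the embedding $G\le F_0\amalg H$ and the action on the standard tree $T$, since stabilizers of distinct vertices of a pro-$p$ tree intersect in the stabilizer of the geodesic between them — the topological assertions reduce to the fact that everything is detected at a finite quotient, i.e. at a finite level of the inverse system defining $T$.
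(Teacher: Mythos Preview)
Your overall strategy coincides with the paper's --- embed $G$ in $G_0=F_0\amalg H$ via Theorem~\ref{virtually free embedding} and use stabilizers for the $G$-action on conjugates of $H$ --- but the execution is more complicated than needed and several justifications do not hold as written. The paper works directly with the coset space $G_0/H$ (the remaining vertices of $T$ have trivial $G_0$-stabilizer, so passing through the tree adds nothing). Continuity of the family $\{G_x\mid x\in G_0/H\}$ is a general fact about stabilizers on a homogeneous profinite space (\cite[Lemma~5.2.2]{R 2017} or \cite[4.8]{Me}), so your inverse-limit sketch is unnecessary. Closedness of $X=\{x\mid G_x\neq 1\}$ then follows in one line: $G\cap F_0$ is open and torsion-free, so $G_x=1$ iff $G_x\subseteq G\cap F_0$, and $\{x\mid G_x\subseteq G\cap F_0\}$ is open by the very definition of a continuous family. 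Your attempt to derive closedness from the finite-centralizer hypothesis is misplaced --- that hypothesis is needed only to produce the embedding into $G_0$ --- and the stated reason (``a torsion element lies in only finitely many conjugates of $H$'') does not by itself yield openness of the complement.

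Your cut-down from $Y$ to a ``maximal-stabilizer'' subset is also superfluous: every non-trivial $G_x$ is already a maximal finite subgroup of $G$. Distinct conjugates of $H$ in the free pro-$p$ product $G_0=F_0\amalg H$ intersect trivially, so any finite $K\leq G$ containing $G_x\neq 1$ lies in a unique conjugate $yHy^{-1}$; this forces $yH=xH$ and hence $K=G_x$. Your edge criterion is in any case vacuous here, since every edge of $T$ has trivial $G_0$-stabilizer.
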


\begin{proof} Let $G_0=F_0\amalg H$ be a free pro-$p$ product containing $G$ from Theorem \ref{virtually free embedding}. Consider  the family $\{G_x\mid x\in G_0/H\}$ of the stabilizers of points of the profinite space $G_0/H$ on which $G$ acts from the left. By \cite[4.8 (3-d paragraph)]{Me} or \cite[Lemma 5.2.2] {R 2017}) $\{G_x\mid x\in G_0/H\}$ is a continuous family of subgroups of $G$. Since $G\cap F_0$ is an open free pro-$p$ subgroup of $G$ the subset $\{x\in G_0/H\mid  G_x\leq F_0\cap G\}$ is open and clearly $G_x\not\leq F_0\cap G$ iff $G_x\neq \{1\}$. Therefore the subset $X=\{x\in G_0/H\mid  G_x\neq \{1\}\}$ is closed and so one deduces from the definition of a continuous family that $\mathcal F=\{G_x\mid x\in X\}$ is a continuous family of subgroups of $G$. Clearly $X$ is $G$-invariant, and since $G_{0x}=xHx^{-1}$ is a maximal finite subgroup of $G_0$, $G_x$ is a maximal finite subgroup of $G$. The lemma is proved.

\end{proof}

We finish the section with the proof of Theorem \ref{decomposition}.

\begin{thm} Let $G$ be a virtually free pro-$p$ group having finite centralizers of non-trivial torsion elements and $X$ be the space from Lemma \ref{continuous family}. Then $G$ is a free pro-$p$ product of finite $p$-groups and a free pro-$p$ group if and only if  the natural quotient map $\theta:X\longrightarrow X/G$ with respect to the action of $G$ admits a continuous section.\end{thm}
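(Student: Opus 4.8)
The plan is to run both implications through the embedding $G\le G_0=F_0\amalg H$ furnished by Theorem~\ref{virtually free embedding} ($H$ a finite $p$-group, $F_0$ free pro-$p$), using the description of Lemma~\ref{continuous family}: the non-trivial stabilisers of the left $G$-action on $G_0/H$ are exactly the maximal finite subgroups of $G$, they form a continuous family $\caF=\{G_x\}_{x\in X}$, the indexing space $X\subseteq G_0/H$ is closed and $G$-invariant, and $X/G=G\backslash X$. Since $\caF$ is the family of maximal finite subgroups of $G$ with its canonical continuous-family structure, the $G$-space $X$ is intrinsic to $G$ and we move freely between it and this concrete model inside $G_0/H$.

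\emph{Necessity.} Suppose $G=\left(\coprod_{y\in Y}K_y\right)\amalg\Phi$ with $Y$ a profinite space, the $K_y$ finite $p$-groups and $\Phi$ free pro-$p$; after the usual normalisation we may assume the $K_y$ pairwise non-conjugate and each $K_y\ne 1$. Letting $G$ act on the standard pro-$p$ tree of this decomposition (\cite[Example (e)]{R 2017}), every finite subgroup of $G$ fixes a vertex, hence is conjugate into some $K_y$; as the $K_y$ are the only non-trivial finite vertex groups, the maximal finite subgroups of $G$ are precisely the conjugates $gK_yg^{-1}$, and $gK_yg^{-1}=g'K_{y'}g'^{-1}$ forces $y=y'$ and $g^{-1}g'\in K_y$ (using $N_G(K_y)=K_y$). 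Thus the natural map $\coprod_{y\in Y}G/K_y\to X$, $(y,gK_y)\mapsto gK_yg^{-1}$, is a continuous $G$-equivariant bijection of a compact space onto a Hausdorff space, hence a homeomorphism; so $X/G\cong Y$ and $\theta$ is identified with the canonical projection $\coprod_{y\in Y}G/K_y\to Y$. The map $y\mapsto 1\cdot K_y$ is a continuous section of it, by the very construction of $\coprod_{y\in Y}K_y$ over the profinite space $Y$ (\cite[Section~5.2]{R 2017}); this is the required section of $\theta$.

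\emph{Sufficiency.} Conversely, let $\sigma\colon X/G\to X$ be a continuous section, with closed image $Y=\sigma(X/G)\subseteq X\subseteq G_0/H$; then $Y$ meets each $G$-orbit on $X$ exactly once and is a continuous transversal for these orbits. Since $G_0=F_0\amalg H$ has a single conjugacy class of non-trivial finite free factors, namely that of $H$, I would invoke the pro-$p$ Kurosh subgroup theorem for the closed subgroup $G\le G_0$ (\cite[Thm 9.6.2]{R 2017}, \cite[Thm 4.3]{Me}) in the form valid for arbitrary closed subgroups, whose hypothesis is exactly that the non-trivial intersections $G\cap xHx^{-1}$ admit a continuous transversal over their $G$-conjugacy classes — which is precisely the datum $\sigma$. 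This produces $G=\left(\coprod_{y\in Y}(G\cap yHy^{-1})\right)\amalg\Phi'$ with $\Phi'$ free pro-$p$; here each $G\cap yHy^{-1}=G_y$ is a maximal finite subgroup of $G$ and, by Lemma~\ref{continuous family}, the family $\{G_y\}_{y\in Y}$ is continuous over $Y\cong X/G$. Hence $G$ is a free pro-$p$ product of finite $p$-groups and a free pro-$p$ group.

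The only non-formal step, and the main obstacle, is the sufficiency direction: one must pin down the correct form of the pro-$p$ Kurosh subgroup theorem for the \emph{non-open} closed subgroup $G$ of $F_0\amalg H$ — the version (in the spirit of \cite[Ch.~9--10]{R 2017}) whose validity is governed by a continuous transversal for the families of non-trivial intersections $G\cap xHx^{-1}$, the trivial intersections being absorbed into the free factor $\Phi'$ without any further hypothesis — and to verify that this transversal condition reduces, exactly, to a continuous section of $\theta$. That such sections need not exist in general, by \cite[Thm~10.7.4]{R 2017} and \cite[Section~4]{jalg}, is precisely what makes the criterion non-vacuous.
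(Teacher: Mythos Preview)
Your sufficiency argument is exactly the paper's: invoke the pro-$p$ Kurosh subgroup theorem for $G\le G_0=F_0\amalg H$, the continuous section of $\theta$ being precisely the datum that replaces second countability in \cite[Thm.~4.3]{Me} (equivalently, what makes $\{G_{\sigma(z)}:z\in X/G\}$ a continuous family for \cite[Thm.~9.6.1(a)]{R 2017}).

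For necessity you take a different route. You identify $X$ with the space of maximal finite subgroups of $G$, parametrise the latter via the given decomposition $G=\coprod_{y}K_y\amalg\Phi$ as $\coprod_{y}G/K_y$, and read off the section $y\mapsto 1\cdot K_y$. The paper instead stays inside $X\subset G_0/H$: with $E_0=\bigl(\bigcup_{t}G_t\bigr)\setminus\{1\}$ closed in $G$ by \cite[Lemma~5.2.1]{R 2017}, it sets $X_0$ to be the set of $x\in X$ fixed by some element of $E_0$ (obtained as a coordinate projection of a closed subset of $X\times X$ via $(g,x)\mapsto(gx,x)$, hence closed) and checks directly that $\theta|_{X_0}$ is a continuous bijection onto $X/G$, using non-conjugacy of distinct free factors $G_t$ for injectivity and that every finite subgroup of $G$ is conjugate into some $G_t$ for surjectivity.

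Your argument is correct in outline, but the sentence ``the $G$-space $X$ is intrinsic to $G$'' conceals a real step: you need the stabiliser map $x\mapsto G_x$ from $X$ to the space of maximal finite subgroups of $G$ to be a \emph{homeomorphism}, not merely a bijection; equivalently, that the map $y\mapsto x(y)$ sending $y$ to the unique coset $g_0H\in G_0/H$ with $K_y\le g_0Hg_0^{-1}$ is continuous. This is true and not hard (continuity of the family $\{G_x\}$ together with compactness suffices), but it is not the triviality you present it as, and you have not supplied it. The paper's construction sidesteps the issue entirely: closedness of $X_0$ and bicontinuity of $\theta|_{X_0}$ come directly from compactness of $E_0$ and continuity of the action $G\times X\to X$, with no need to compare $X$ to any intrinsic object. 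So the paper's route is more self-contained here, while yours would be tidier once the intrinsic description of $X$ is properly established.
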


\begin{proof} `If' follows from the pro-$p$ version of the Kurosh Subgroup Theorem  (\cite[Thm. 4.3]{Me}) applied to the subgroup $G$ of $G_0$ taking into account that the existence of a continuous section $s$ to $\theta$ is only what is needed to remove the second countability hypothesis from its statement. See for example \cite[Comment (5.1)]{Me} , where this  is explicitely written.  Alternatively one can use  \cite[Thm 9.6.1 (a)]{R 2017}) by considering a natural action of $G$ on the standard pro-$p$ tree $T$ for $G_0$ and as in the proof of (b) of this theorem one can use \cite[Lemma 5.2.2]{R 2017}) and continuity of $s$ to show that $\{G_x\mid x\in s(X/G)\}$ is a continuous family.

`Only if'. Let $G=\coprod_{t\in T}G_t \amalg F_0$, where $\{G_t\mid t\in T\}$ is a continuous family of finite subgroups of $G$.  Let $F$ be an open free pro-$p$ subgroup of $G$.  By \cite[(1.2) Remark]{Me} or \cite[Lemma 5.2.1]{R 2017} $E=\bigcup_{t\in T} G_t$ is closed in $G$ and so $E_0=E\setminus F=E\setminus \{1\}$ is closed, where $F$ is an open free pro-$p$ subgroup of $G$. Consider a continuous map $\eta:G\times X\longrightarrow X\times X$, $\eta(g,x)=(gx,x)$. Let $X_0$ be the projection of $\eta(E_0)\cap D$ on the first coordinate, where $D$ is the diagonal of $X\times X$. It suffices to show that the restriction of $\theta$  to $X_0$ is a homeomorphism.  Clearly, $\theta_{|X_0}$ is continuous, so we need to check bijectivity. Choose $x_1\neq x_2\in X_0$. Then $\eta^{-1}(x_1, x_1)=G_{t_1}\times \{x_1\}$ and $\eta^{-1}(x_2, x_2)=G_{t_2}\times \{x_2\}$ for some $t_1\neq t_2 \in T$.  Since $G_{t_1}$ and $G_{t_2}$ are not conjugate in $G$ (see \cite[Cor. 7.1.5 (a)]{R 2017}) they stabilize points $x_1$ and $x_2$ in different  $G$-orbits, so $\theta_{|X_0}$ is injective. The surjectivity of $\theta_{|X_0}$ is clear, since any finite group $G_x$ is conjugate to some $G_t$ in $G$ see \cite[Cor. 7.1.3]{R 2017}) and $G_t$ stabilizes some point $x_0\in X_0$, i.e. $\theta(x)=\theta(x_0)$.

\end{proof}

\section{Appendix}

We shall give a construction from \cite{HZ 07} that uses an HNN-extension to embed a semidirect product $G=F\rtimes H$ of a free pro-$p$ group $F$ and a finite $p$-group $H$ into a semidirect product $\tilde G=\tilde F\rtimes H$ of a free pro-$p$ group $\tilde F$ and the same group $H$ such that all finite subgroups of $\tilde G$ conjugate into $H$. In particular it gives another proof of Proposition 1.3 in \cite{crelle}.

 Let  $\pi:G\longrightarrow H$ be the natural projection; note that $\pi$ restricted to any finite subgroup of $G$ is an injection. Let $Fin(G)$ be the set of all finite subgroups of $G$. Since the order of any finite subgroup of $G$ does not exceed $|H|$  and $G$ is a projective
limit of finite groups, $\Fin G$ is the projective limit of the
respective sets of  subgroups of order $\leq |H|$ -- hence it carries a natural
topology (the {\it subgroup topology}) -- turning it into a profinite
space. Equipped with this topology, $\Fin G$ with $G$ acting by
conjugation becomes a profinite $G$-space.

Let ${\caA}=\{A_i, i\in I\}$ be the set of all subgroups of $H$. The projection $\pi$ induces a continuous surjection $\rho: Fin(G)\longrightarrow {\caA}$. Put $X_i=\rho^{-1}(\{A_i\})$. Define $\phi_i:A_i\times X_i\longrightarrow G$ by setting $\phi_i(a,x)$ to be the unique element $a_x$ of the group $x\in X_i$ such that $\pi(a_x)=a$.  Form an HNN-extension $\tG:=\HNN(G,A_i,\phi_i, X_i)$, $i\in I$ (since $X_i$ are compact, the distinguished point from Definition \ref{HNN-ext} can be omitted). By \cite[Theorem 7.1.2]{R 2017} (cf. also \cite[Example 6.2.3 (e)]{R 2017}) each finite subgroup of $\tilde G$  is conjugate into $G$ and hence by construction into $H$. Note also that the natural epimorphism $\pi:G\longrightarrow H$ extends by the universal property to $\tilde \pi:\tilde G\longrightarrow H$. By \cite[Lemma 10]{HZ 07} or \cite[Theorem 9.6.1 (a)]{R 2017} $\tilde F=ker(\tilde pi)$ is a free pro-$p$ products of conjugates of $\tilde F\cap G=F$ and a free pro-$p$ group; thus $\tilde F$ is free pro-$p$.

\begin{lem}\label{mono}
The natural homomorphism $G\longrightarrow \tilde G$ is an injection. \end{lem}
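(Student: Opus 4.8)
The plan is to show that the HNN-extension $\tilde G = \HNN(G, A_i, \phi_i, X_i)$ constructed in the Appendix does not collapse the base group $G$, i.e. that the canonical homomorphism $G \to \tilde G$ is injective. The natural tool is the Bass--Serre style action of $\tilde G$ on its standard pro-$p$ tree $S = S(\tilde G)$, described in Remark \ref{action on tree}: the vertex set is $\tilde G / G$ and $G$ is precisely the stabilizer of the base vertex $v_0 = 1\cdot G$. If $g \in G$ maps to the identity in $\tilde G$, then $g$ acts trivially on $S$, and in particular fixes every vertex; but one should first simply note that the composite $G \to \tilde G \overset{\tilde\pi}{\to} H$ is the original projection $\pi : G \to H$, so the kernel of $G \to \tilde G$ is contained in $F = \ker \pi$. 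Thus it suffices to prove that $F \to \tilde G$ is injective.

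First I would identify, via the universal property of the HNN-extension, that $\tilde F := \ker(\tilde\pi : \tilde G \to H)$ is generated by the image of $F$ together with the stable letters $\bigcup_i X_i \setminus \{*\}$, and that (as recorded at the end of the Appendix, using \cite[Lemma 10]{HZ 07} or \cite[Theorem 9.6.1(a)]{R 2017}) $\tilde F$ is itself a free pro-$p$ product of conjugates of $F$ and a free pro-$p$ group. In that free pro-$p$ decomposition of $\tilde F$, the factor $\tilde F \cap G = F$ appears as a genuine free factor (it is one of the conjugates, namely the one conjugated by the identity). Hence the free-factor retraction $\tilde F \to F$ restricts to the identity on $F$, which forces $F \to \tilde F$ — and therefore $F \to \tilde G$ — to be injective. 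Combined with the previous paragraph, $G \to \tilde G$ is injective.

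Alternatively, and perhaps more transparently, one can argue directly with the tree action: by \cite[Theorem 7.1.2]{R 2017} applied to the fundamental pro-$p$ group of the graph of pro-$p$ groups whose single vertex group is $G$, the vertex stabilizer of $v_0$ under the $\tilde G$-action on $S(\tilde G)$ is exactly the image of $G$, and the map $G \to \stab_{\tilde G}(v_0)$ is an isomorphism — this is part of the statement that in a pro-$p$ HNN-extension the base group embeds. I expect the main (really the only) obstacle is bookkeeping: one must make sure the cited embedding results for fundamental pro-$p$ groups of profinite graphs of groups genuinely apply to the simplified HNN construction of Definition \ref{HNN-ext} with a possibly infinite (profinite) set of stable letters, rather than only to the classical one-stable-letter case; Remark \ref{action on tree} already provides the needed translation into $\Pi_1(\G,\Gamma)$ over a bouquet, so invoking \cite[Theorem 7.1.2]{R 2017} is legitimate. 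Everything else is immediate, so the proof reduces to citing this standard embedding theorem together with the observation $\ker(G \to \tilde G) \subseteq \ker \pi = F$.
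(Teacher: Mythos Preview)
Your argument has a genuine gap: in the pro-$p$ world the base group of an HNN-extension does \emph{not} automatically embed, so both of your routes are circular. A clean counterexample is $G=\Z_p=\langle g\rangle$, $A=\langle g^p\rangle$, $\phi(g^p)=g$; in any finite $p$-quotient of the HNN-extension the relation $tg^pt^{-1}=g$ forces the image of $g$ to be trivial, so the pro-$p$ HNN-extension collapses $G$ entirely. Thus there is no general ``the vertex group injects into $\Pi_1(\G,\Gamma)$'' theorem to invoke; \cite[Theorem~7.1.2]{R 2017} is about finite subgroups being conjugate into vertex groups, not about injectivity of the canonical map, and Remark~\ref{action on tree} only identifies $\stab_{\tilde G}(v_0)$ with the \emph{image} of $G$, which is tautological. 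Likewise, the Kurosh-type decomposition of $\tilde F$ you cite expresses $\tilde F$ as a free pro-$p$ product of conjugates of $\tilde F\cap(\text{image of }G)$, i.e.\ of the image of $F$; the free-factor retraction lands in this image, not in $F$, so it cannot detect $\ker(F\to\tilde G)$.

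The paper's proof supplies exactly the missing ingredient: it exploits the specific features of this HNN-extension (the associated subgroups $A_i$ are finite and $G$ is free-pro-$p$-by-finite-$p$) to manufacture enough finite $p$-quotients separating $G$. Concretely, for each open normal $V\le F$ one passes to the abstract HNN-extension $\HNN^{abs}(G/V,A_iV/V,\rho_V(X_i))$ over the finite base $G/V$; its kernel onto $H$ is a free product of copies of the finite $p$-group $F/V$ with a free group, hence is residually~$p$, so the whole group is residually~$p$ and admits a finite $p$-quotient into which $G/V$ injects. Letting $V$ shrink shows $G$ embeds in $\tilde G$. This residual-$p$ step, which hinges on the finiteness of the $A_i$ and of $G/V$, is the substantive content you are missing.
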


\begin{proof} Let $\tG^{abs}=\HNN^{abs}(G,A_i,\phi_i, X_i)$, $i\in I$ be the abstract HNN-extension. Then it suffices to show that the natural homomorphism $\tG^{abs}\longrightarrow \tilde G$ is injective. By Definition \ref{HNN-ext} $\tG:=\HNN(G,A_i,\phi_i, X_i)$, $i\in I$
is defined to be  the quotient of $G\amalg F(\bigcup_{i\in I} X_i)$, where $F(\bigcup_{i\in I} X_i)$ is a free pro-$p$ group on  $(\bigcup_{i\in I} X_i)$, modulo the
relations $\phi_i(a_i)=x_ia_ix_i^{-1}$ for all $x_i\in X_i$, $i\in I$. Thus we have the following commutative diagram

$$\xymatrix{ G \star F(\bigcup_{i\in I} X_i)\ar[r]\ar[d]& G\amalg F(\bigcup_{i\in I} X_i)\ar[d]\\
\tG^{abs}\ar[r] & \tG},$$ where $\star$ means the abstract free product.

Note that $G\amalg F(\bigcup_{i\in I} X_i)$ is  the pro-$p$ completion of $G * F(\bigcup_{i\in I} X_i)$ with respect to the family of normal subgroups $N$ of finite index such that $N\cap G$ is open in $G$ and $(\bigcup_{i\in I} X_i)\longrightarrow (\bigcup_{i\in I} X_i)N/N$ is continuous. Therefore $\tG$ is the pro-$p$ completion of $\tG^{abs}$ with respect to the family of images of $N$ in $\tG^{abs}$, i.e. with respect to the family of normal subgroups $U$ of finite index in $\tG^{abs}$ satisfying the same properties. Moreover, w.l.o.g we may assume that $U\cap G\leq F$.

Choose   an open normal subgroup $V\triangleleft_o G$ of $G$ contained in $F$. Then the natural epimorphism $G\longrightarrow G/V$ induces the natural continuous map $\rho_V:Fin(G)\longrightarrow Fin(G/V)$ and therefore continuous  maps $\rho_{V,i}: X_i\longrightarrow Fin(G/V)$.
  Then by the universal property of abstract HNN-extensions $\rho_{V,i}$ and $G\longrightarrow G/V$ extend to an epimorphism $$\tilde\rho_V:\tG^{abs}\longrightarrow {\rm HNN}^{abs}(G/V, A_iV/V, \rho_V(X_i)).$$  Note that the natural epimorphism $\pi:G\longrightarrow H$ extends to the epimorphism $\tilde\pi^{abs}:\tilde G^{abs}\longrightarrow H$ (the samer way as to $\tilde \pi$). Moreover, $\tilde\pi^{abs}$ factors through $\tilde\rho_V$, i.e. we have the natural epimorphism $$\varphi_V:{\rm HNN}^{abs}(G/V, A_iV/V, \rho_V(X_i))\longrightarrow H$$ whose kernel is an abstract free product  of $F/V$ and an abstract free group (by the Kurosh subgroup theorem). Moreover, since $V\leq F$, ${\rm HNN}^{abs}(G/V, A_iV/V, \rho_V(X_i))=ker(\varphi_V)\rtimes HV/V$ and hence is residually $p$ (as an extension of a residually $p$ group by a finite $p$-group). Therefore there exists an epimorphism
   $$\varphi_{VK}:{\rm HNN}^{abs}(G/V, A_iV/V, \rho_V(X_i))\longrightarrow K$$  to a finite $p$-group $K$ injective on $G/V$. Then for the kernel $U_{VK}$ of $\varphi_{VK}\tilde\rho_V$ we have $U_{VK}\cap G$ is open in $G$ and $(\bigcup_{i\in I} X_i)\longrightarrow (\bigcup_{i\in I} X_i)U_{VK}/U_{VK}$ is continuous. Hence $\bigcap_{V}(U_{VK}\cap G)=1$ implies that $G$ indeed embeds in $\tilde G$.

\end{proof}

\end{document}